\documentclass[12pt]{amsart}
\pdfoutput=1
\usepackage{microtype}
\overfullrule = 10cm 
\usepackage{etex} 
\usepackage[active]{srcltx}
\usepackage{calc,amssymb,amsthm,amsmath,amscd, eucal,ulem}
\usepackage{alltt}
\synctex=1
\RequirePackage[dvipsnames,usenames]{color}

\normalem
\input{mabliautoref.sty}
\input{xy}
\xyoption{all}
\input{kmacros3.sty}
\usepackage{tikz}
\usepackage{amsfonts, mathrsfs}
\usepackage{mathtools}
\usepackage{calligra}
\usepackage{stmaryrd} 
\usepackage{dcpic, pictexwd}
\usepackage[left=1in,top=1in,right=1in,bottom=1in]{geometry}
\usepackage{bm}
\usepackage{verbatim}

\usepackage[cal=boondox, calscaled=1.05]{mathalfa}

\numberwithin{equation}{theorem}


\newcommand{\kay}{\mathcal{k}}

\renewcommand{\:}{\colon}
\newcommand{\eg}{{\itshape e.g.} }
\renewcommand{\m}{\mathfrak{m}}

\DeclareMathOperator{\vol}{vol}

\DeclareMathOperator{\Bl}{Bl}



\DeclareMathOperator{\Cl}{Cl}

\DeclareMathOperator{\ssHom}{ \sH\!\!\;\!\!\text{\calligra{\Large om}\,}}

\usepackage{setspace}
\usepackage{hyperref}
\usepackage{url}
\let\oldFootnote\footnote
\newcommand\nextToken\relax

\renewcommand\footnote[1]{%
    \oldFootnote{#1}\futurelet\nextToken\isFootnote}

\newcommand\isFootnote{%
    \ifx\footnote\nextToken\textsuperscript{,}\fi}

\usepackage{enumitem}
\usepackage{graphicx}
\usepackage[all,cmtip]{xy}

\usepackage{verbatim}

\theoremstyle{theorem}


\renewcommand{\O}{\mathcal{O}}
\usepackage{marvosym}

\begin{document}
\title{Varieties with ample Frobenius-trace kernel} 
\author[J.~Carvajal-Rojas]{Javier Carvajal-Rojas}
\address{Centro de Investigaci\'on en Matem\'aticas, A.C., Callej\'on Jalisco s/n, 36023 Col. Valenciana, Guanajuato, Gto, M\'exico}
\email{\href{mailto:javier.carvajal@cimat.mx}{javier.carvajal@cimat.mx}}
\author[Zs.~Patakfalvi]{Zsolt Patakfalvi}
\address{\'Ecole Polytechnique F\'ed\'erale de Lausanne\\ SB MATH CAG\\MA C3 635 (B\^atiment MA)\\ Station 8 \\CH-1015 Lausanne\\Switzerland}
\email{\href{mailto:zsolt.patakfalvi@epfl.ch}{zsolt.patakfalvi@epfl.ch}}
\keywords{Cokernel of Frobenius, Cartier operators, Frobenius traces, Kunz's theorem, Mori--Hartshorne's theorem.}

\thanks{The authors were partially supported by the ERC-STG \#804334. Carvajal-Rojas was partially supported by the grants FWO \#G079218N, CONAHCYT \#CBF2023-2024-224, and CONAHCYT \#CF-2023-G-33.}

\subjclass[]{14G17, 14J45, 14E30, 14J30, 14J26}


\begin{abstract}
In the search for a projective analog of Kunz's theorem and a Frobenius-theoretic analog of  Mori--Hartshorne's theorem, we investigate the positivity of the kernel of the Frobenius trace (equivalently, the negativity of the cokernel of the Frobenius endomorphism) on a smooth projective variety over an algebraically closed field of positive characteristic. For instance, such a kernel is ample for projective spaces. Conversely, we show that for curves, surfaces, and threefolds, the Frobenius trace kernel is ample only for Fano varieties of Picard rank $1$.
\end{abstract}

\maketitle


\section{Introduction}

A theorem is missing in algebraic geometry. Namely, we are missing a projective analog of Kunz's theorem characterizing regularity \cite{KunzCharacterizationsOfRegularLocalRings} and at the same time a Frobenius-theoretic analog of Mori--Hartshorne's characterization of projective space \cite{MoriCharacterizationProjSpace,HartshorneAmpleSubvarieties} (originally known as Hartshorne's conjecture). We will elaborate on this next, for which we fix an algebraically closed field $\kay$ of characteristic $p \geq 0$.

Let us consider two important dichotomies in algebraic geometry: local vs. global and characteristic zero vs. positive characteristic. For instance, the local study of coherent sheaves surrounds the notion of freeness/flatness, whereas globally it focuses on positivity (e.g. ampleness). Likewise, characteristic zero geometry is governed by differentials $\Omega^1$, whereas on positive characteristic geometry the Frobenius endomorphisms must be taken into account.  Thus, with respect to the above two dichotomies, there are four scenarios in which one may do algebraic geometry. We claim that there is a theorem on three of these scenarios and an analogy between them, but the analogous theorem is missing in the fourth scenario. The situation is summarized as follows:

\begin{center}
\begin{tabular}{|c|c|c|}
\hline
                      & Local (singularities) & Global (projective geometry)   \\ \hline
Differentials & Jacobian criterion    & Mori--Hartshorne's theorem       \\ \hline
Frobenius ($p>0$)     & Kunz's theorem        & \textbf{?}                               \\ \hline
\end{tabular}
\end{center}

For the reader's convenience, we briefly recall these three prominent theorems. Let us start with Kunz's theorem \cite{KunzCharacterizationsOfRegularLocalRings} and assume that $p>0$. Kunz's theorem establishes that a variety $X/\kay$ is smooth if and only if $F_* \sO_X$ is locally free of rank (necessarily) $p^{\dim X}$, where $F=F_X\: X \to X$ denotes the (absolute) Frobenius endomorphism of $X$. Equivalently, let us consider the exact sequence
\[
0 \to \sO_X \xrightarrow{F^{\#}} F_* \sO_X \to \sB^1_X \to 0
\]
defining $\sB^1_X$ as the \emph{cokernel of Frobenius}. Then, Kunz's theorem can be rephrased by saying that $X/\kay$ is smooth if and only if $\sB_X^1$ is locally free of rank (necessarily) $p^{\dim X}-1$.\footnote{Technically speaking, Kunz's theorem characterizes the regularity of $X$ rather than the smoothness of $X/\kay$.} 

Compare this to the jacobian criterion: a variety $X/\kay$ is smooth if and only if $\Omega^1_{X/\kay}$ is locally free of rank $\dim X$. Thus, the smoothness of a variety $X/\kay$ can be determined using either $\Omega_{X/\kay}^1$ or $\sB_X^1$.

There is a purely local way to look at Kunz's theorem. Set $\hat{\bA}^d_{\kay} \coloneqq \Spec \kay \llbracket x_1, \ldots, x_d \rrbracket$. By the Cohen structure theorem, the spectra of noetherian complete local $\kay$-algebras $(A,\fram,\kay)$ are, up to isomorphism, the closed subschemes of $\hat{\bA}^n_{\kay}$ for some $n$, and $\hat{\bA}^d_{\kay}$ is (up to isomorphism) the only regular one of dimension $d$. We refer to these spectra simply as $\kay$-singularities. Thus, Kunz's theorem establishes that $\hat{\bA}^d_{\kay}$ is characterized among $d$-dimensional $\kay$-singularities as the one and only one whose Frobenius has a free cokernel. Of course, an analogous characterization can be obtained using differentials by the jacobian criterion.

On the other hand, we may consider Mori's theorem (originally called Hartshorne's conjecture) characterizing the projective spaces among smooth projective varieties by the ampleness of the tangent sheaf \cite{MoriCharacterizationProjSpace}, \cf \cite{HartshorneAmpleSubvarieties,MabuchiC3ActionsOnAlgebraicThreefoldsWithAmpleTangentBundle,MoriSumihiroOnHartshorneConjecture}. Concretely, a $d$-dimensional smooth projective variety $X/\kay$ has an ample tangent sheaf $\sT_{X/\kay} \coloneqq \Omega_{X/\kay}^{1,\vee}$ if and only if $X \cong \bP^d_{\kay} \coloneqq \Proj \kay[x_0,\ldots,x_d]$. We refer to this as the Mori--Hartshorne theorem. See \cite[V, Corollary 3.3]{KollarRationalCurvesOnAlgebraicVarieties} for a treatment in an arbitrary (equal) characteristic.

Further, a direct graded-algebra computation shows that
\begin{equation} \label{DefAie}
F_* \sO_{\P^d_{\kay}} \cong \sO_{\P^d_{\kay}} \oplus \sO_{\P^d_{\kay}}(-1)^{\oplus a_{1}} \oplus \cdots \oplus \sO_{\P^d_{\kay}}(-d)^{\oplus a_{d}},
\end{equation}
where the integers $a_{1},\ldots,a_{d}$ are uniquely determined by such isomorphism. Moreover,
\[
 \sB^{1,\vee}_{\bP^d_{\kay}} \cong \sO_{\P^d_{\kay}}(1)^{\oplus a_{1}} \oplus \cdots \oplus \sO_{\P^d_{\kay}}(d)^{\oplus a_{d}}
\]
is ample.

In view of all the above, it is inevitable to wonder:

\begin{question} \label{que.FirstQuestionAmplenessE}
 Let $X/\kay$ be a $d$-dimensional smooth projective variety such that the locally free sheaf $\sE_X \coloneqq \sB^{1,\vee}_X$ is ample, which experts will quickly recognize as the kernel of the Frobenius trace $\tau_X \: F_* \omega_X^{1-p} \to \sO_X$ (see \autoref{sec.FrobeniusCartierOperators}). Is $X$ isomorphic to $\bP^d_{\kay}$?
\end{question}

If \autoref{que.FirstQuestionAmplenessE} were to have an affirmative answer, we may think of it as both a projective Kunz's theorem and a Frobenius-theoretic Mori--Hartshorne's criterion. That is, we could tell the projective spaces apart among smooth projective varieties by the ampleness of a locally free sheaf naturally defined via its Frobenius. Unfortunately, \autoref{que.FirstQuestionAmplenessE} has a negative answer for all $d \geq 3$. Indeed, using the description in \cite{LangerDAffinityFrobeniusQuadrics,AchingerFrobeniusPushForwardQuadrics}, $\sE_X$ can be seen to be ample already for quadrics of dimension $d \geq 3$ and $p \geq 3$; see \autoref{ex.QuadricsAnalysis}. 

On the positive side, we show that if $\sE_X$ is ample then $X$ is a Fano variety. In general, if $\sE_X$ has a certain positivity property, then the same property holds for $\omega_X^{-1}$; see \autoref{thm.AmpleImpliesFano}. In particular, \autoref{que.FirstQuestionAmplenessE} has an affirmative answer for $d=1$. We are able to verify this for surfaces as well. For threefolds, we managed to reduce the class of Fano threefolds for which $\sE_X$ is ample via an extremal contraction analysis. We obtain the following result.

\begin{mainthm*}[{\autoref{thm.MainTheorem}, \autoref{cor.MainCorDim2}, \autoref{cor.MainCorDim1}}]
Let $X/\kay$ be a $d$-dimensional smooth projective variety such that $\sE_X = \ker(\tau_X \: F_* \omega_X^{1-p} \to \sO_X)$ is ample and $d\leq 3$. Then $X$ is a Fano variety of Picard rank $1$. 
\end{mainthm*}
However, its converse seems rather tricky. Except for the projective space and the quadric, we do not know whether $\sE_X$ is ample for Fano threefolds of Picard rank $1$ (i.e. for those of index $1$ or $2$). However, if $X$ is the quadric, $\sE_X$ is ample if and only if $p \neq 2$ (see \autoref{cor.Quadrics}), which we find rather baffling. We leave this converse problem open:

\begin{question} \label{que.ProjectieKunztheoremWeakForm}
For which Fano threefolds of Picard rank $1$ is $\sE_X$ ample? Does this depend on the characteristic as it does for quadrics? 
\end{question}

We may still wonder whether there is a locally free sheaf naturally defined via Frobenius that can be used to tell the projective space apart among Fano varieties of Picard rank $1$. Our failed, first attempt was to use the dual of the cokernel of Frobenius $\sB^1_X$, which is none other than the kernel of the \emph{first} Cartier operator on $X$; see \autoref{rem.GeneralCartierOperators}. Nonetheless, there are $d+1$ Cartier operators $\{\kappa_i \: F_* Z^i \to \Omega_{X/
\kay}^i\}_{i=0}^d$ attached to a $d$-dimensional smooth projective variety $X/\kay$, where $Z^i \subset \Omega^i_{X/\kay}$ is the subsheaf of exact forms. For instance, the $d$-th Cartier operator is the usual $F_* \omega_X \to \omega_X$ giving $\omega_X$ its natural Cartier module structure. Letting $\sB^i_X$ be the kernel of the $i$-th Cartier operator, we may wonder whether the ampleness of $\sB_X^{\bullet, \vee} \coloneqq \bigoplus_{i=0}^d\sB_X^{i,\vee}$ may be used to characterize the projective space among smooth projective varieties.\footnote{It turns out that $\sB_X^{0,\vee}=0$ and so we may ignore the $0$-th direct summand.} On the other hand, the following relation is well-known
\[
\sB_X^{d,\vee} \cong \sB^1_X \otimes \omega_X^{-1} \cong (\sE_X \otimes \omega_X)^{\vee};
\]
see \autoref{rem.GeneralCartierOperators}. From this, we see that $\sB_X^{d,\vee}$ is ample for all projective spaces and all quadrics of dimension $\geq 3$. However, 
\begin{question}
Can we use the ampleness of $\sB^{2,\vee}$ to distinguish between $\bP^3_{\kay}$ and the threefold quadric $\bQ^3/\kay$?
\end{question}
Unfortunately, we do not know whether $\sB^{2,\vee}_{\bP^3}$ is ample, and also whether $\sB^{2,\vee}_{\bQ^3}$ is not ample.

Roughly speaking, the study of projective varieties can be reduced to the study of graded commutative algebra. Thus, one may expect that one could obtain a \emph{reasonable} projective Kunz theorem by translating the corresponding graded Kunz's theorem. For the reader's convenience, we have worked out this easy translation in \autoref{sec.GradedKUNZ}. See \autoref{cor.GradedKunzTheorem} for the statement, which we find rather unsatisfactory as it does not resemble Mori--Hartshorne's theorem. Nevertheless, it at least indicates that the structure of Frobenius pushforwards can be used to characterize projective spaces among smooth projective varieties.

Last but not least, let us highlight an interesting side application of our methods. Let $(V,\sA)$ be a polarized projective normal variety (i.e. $\sA$ is an ample invertible sheaf in $V$) with a corresponding affine cone $X$ and a vertex point $0 \in X$. In \autoref{sec.CONESEXAMPLES}, we outline a geometric method for describing explicitly $F_* \sO_{X,0}$ as an $\sO_{X,0}$-module. The only input needed is an explicit description of $F_* \sA^i$ for $i=0,\ldots, p-1$. We illustrate this method by carrying out the computations for both Veronese and Segre embeddings. In principle, one can do this for quadric cone singularities as well by using \cite{AchingerFrobeniusPushForwardQuadrics,LangerDAffinityFrobeniusQuadrics} as input. However, we will attempt this rather lengthy calculation elsewhere. The authors are unaware of any such explicit descriptions in the literature and believe that this could be of interest to commutative algebraists. See \autoref{rem.ApplicationLocalAlgebraConesRationalNormalCurve}, \autoref{rem.ApplicationLocalAlgebraVeronese}, \autoref{rem.ApplicationLocalAlgebraSegreProduc}.

\subsection*{Outline} This paper is organized as follows. \autoref{sec.FrobeniusCartierOperators} briefly surveys the basics on Cartier operators (i.e. $\kappa_X \: F_* \omega_X \to \omega_X$) and Frobenius traces (i.e. $\tau_X = \kappa_X \otimes \omega_X^{-1} \: F_* \omega_X^{1-p} \to \sO_X$) and so it may be skipped by experts. In \autoref{sec.FrobpushforwardsProjectiveBundles}, we compute directly the Frobenius pushforwards of invertible sheaves on projective bundles of the form $\bP(\sL_0 \oplus \cdots \oplus \sL_d) \to X$; see \autoref{pro.PushforwardProjectiveBundles}, which we later use in \autoref{sec.Examples} to calculate several examples of interest (e.g. blowups along linear subspaces of projective spaces). Most importantly, this is used to prove that $\sE_X$ is never ample if $X$ is a blowup of a smooth variety along a smooth subvariety; see \autoref{lem.FrobPushForwardLocalgeneralBlowups}. Finally, in \autoref{sec.MainSection}, we study the repercussions the ampleness of $\sE_X$ (as well as other positivity conditions) has on $X$. Our first basic observation is that there is a surjective morphism $F^* \sE_X \to \omega_X^{1-p}$, which allows us to conclude that $\omega_X^{-1}$ is ample if so is $\sE_X$; see \autoref{thm.AmpleImpliesFano}. We further narrow this down to \autoref{Main.SubsectionFibrationsContractions}, where we investigate the interplay between the ampleness of $\sE_X$ and extremal contractions of smooth threefolds. In a nutshell, if $X$ is a Fano variety of dimension $\leq 3$ and $\sE_X$ is ample, then it admits no extremal contraction except for $X \to \Spec \kay$ (where the whole space is contracted to a point) and so $X$ is a Fano variety of Picard rank $1$.

\begin{convention}
The following conventions are used throughout this paper. We fix an algebraically closed field $\kay$ of characteristic $p>0$. All relative objects (such as varieties) and properties (such as smoothness and projectivity) are defined over $\kay$ unless otherwise explicitly stated. For instance, we write $\bP^d = \bP^d_{\kay}$, $\bA^d = \bA^d_{\kay}$, and so on. We let $F^e = F^e_X \: X \to X$ denote the $e$-th iterate of the absolute Frobenius morphism on a variety $X$. We use the shorthand notation $q\coloneqq p^e$. Given $n \in\bZ$, we use the euclidean algorithm to define
\[
n \eqqcolon \lfloor n/q \rfloor q +[n]_q, \quad 0 \leq [n]_q \leq q-1.
\]
We may drop the subscript from $[n]_q$ if no confusion is likely to occur. If $A$ is a finite set, we denote its cardinality by $|A|$. When no confusion is likely to occur, we may drop subscripts in writing, e.g., $\sO=\sO_X$, $\omega=\omega_X=\omega_{X/\kay}$, $\Omega^i=\Omega^i_{X/\kay}$, etc. Finally, $0 \in \bN$.
\end{convention}

\subsection*{Acknowledgements} 
The authors thank Fabio Bernasconi and Takumi Murayama for very useful discussions. The authors are grateful to their master's student Maxime Matthey for pointing out several typos in previous versions. The authors are also thankful to Pieter Belmans and Michel Brion for providing some interesting references regarding Frobenius pushforwards on certain homogeneous spaces. Last but not least, they thank the anonymous referees for their careful reviews and feedback. 

\section{Generalities on Cartier Operators and Frobenius Traces} \label{sec.FrobeniusCartierOperators}
Throughout this section, we let $X$ be a smooth variety of dimension $d$ and $0 \neq e \in \bN$ be a positive integer. Kunz's theorem establishes that $F^e_* \sL$ is a locally free sheaf of rank $q^d$ for all invertible sheaves $\sL$ on $X$. Since $F^e \:X \to X$ is finite, Grothendieck duality establishes a canonical isomorphism of $F^e_* \sO_X$-modules $F^e_* \sO_X \to \ssHom_X(F^e_* \omega_X , \omega_X)$, whose corresponding global section $\kappa^e =\kappa^e_X  : F^e_* \omega_X \to \omega_X$ is the so-called \emph{Cartier operator} on $X$; see \autoref{rem.GeneralCartierOperators} below. Twisting $\kappa^e$ by $\omega_X^{-1}$ and using the projection formula, we obtain a map $\tau^e = \tau^e_X \:F^e_* \omega_X^{1-q} \to \sO_X$, which we refer to as the \emph{Frobenius trace} on $X$.\footnote{In general, $\sL \otimes F_*^e \sF = F_*^e (\sL^q \otimes \sF)$ for all invertible sheaves $\sL$ and all $\sO_X$-modules $\sF$. This follows from the projection formula and the equality $F^{e,*} \sL = \sL^q$.} Notice that $\tau^e$ is surjective. Indeed, this can be checked locally at stalks where it is clear as $X$ is regular (and so $F$-injective). Thus, there is an exact sequence
\begin{equation} \label{eqn.SESDefiningE_e}
0 \to \sE_{e,X} \to F^e_* \omega_X^{1-q} \xrightarrow{\tau^e} \sO_X \to 0
\end{equation}
defining \emph{$\sE_{e,X}$ as the kernel of the Frobenius trace $\tau_X^e$}. Equivalently,
\begin{equation} \label{eqn.SESAjointOfE_e}
0 \to \sE_{e,X} \otimes \omega_X \to F^e_* \omega_X \xrightarrow{\kappa^e} \omega_X \to 0.
\end{equation}
We may also write $\sE_{e,X} = \sE_{e}$ if no confusion is likely to occur. Observe that $\sE_e$ is a locally free sheaf of rank $q^d-1$ as these short exact sequences are both locally split. From \autoref{eqn.SESAjointOfE_e}, it follows that $\chi(X,\sE_e \otimes \omega_X)=0$. By taking duals, we obtain a short exact sequence
\begin{equation} \label{eqn.FrobeniusSES}
0 \to \sO_X \xrightarrow{(\tau^e)^{\vee}=F^{e,\#}} F^e_* \O_X \xrightarrow{} \sE_{e,X}^{\vee} \to 0.
\end{equation}
In particular, $\chi\bigl(X,\sE_{e,X}^{\vee}\bigr)=0$. Additionally, \autoref{eqn.SESDefiningE_e} splits if and only if so does $F^{e,\#} \: \sO_X \to F^e_* \sO_X$, i.e., $X$ is $F$-split. In that case, $H^i(X,\sE_e \otimes \omega_X)=0$ for all $i$ and so $H^i\bigl(X,\sE_e^{\vee}\bigr)=0$ for all $i$ by Serre duality. Those vanishings are equivalent to Frobenius acting injectively (i.e. semi-simply) on the cohomology groups $H^i(X,\sO_X)$. We shall recall in \autoref{rem.Ordinarity} below that this is the case when $X$ is ordinary.

\begin{remark}[Local description of $\kappa^e \: F_*^e \omega_X \to \omega_X$] \label{rem.LocalDescritptionTraces}
Let $x \in X$ be a closed point. Then, the stalk of $\kappa^e \: F_*^e \omega_X \to \omega_X$ at $x\in X$ is a \emph{Frobenius trace}  $\kappa^e_x \: F_*^e \sO_{x,X} \to \sO_{x,X}$ associated to the regular (and so Gorenstein) local ring $\sO_{X,x}$. To be precise, let $\fram_x = (t_1,\ldots,t_d)$ be a regular system of parameters so that 
\[
 \hat{\sO}_{X,x} \otimes_{\sO_{X,x}} F^e_* \sO_{X,x} = F_*^e \hat{\sO}_{X,x} = \bigoplus_{0 \leq i_1, \ldots, i_d \leq q-1} \hat{\sO}_{X,x} F^e_* t_1^{i_1} \cdots t_d^{i_d}
\]
Moreover, $\hat{\sO}_{X,x} \otimes \Omega_{X}^1 = \bigoplus_{i=1}^d \hat{\sO}_{X,x} \mathrm{d} t_i$ and $\hat{\sO}_{X,x} \otimes \omega_{X} = \hat{\sO}_{X,x} \mathrm{d}t_1 \wedge \cdots \wedge \mathrm{d}t_d$; see \cite{TycDifferentialBasisAndp-basis}.
Let $\Phi^e \: F^e_* \sO_{X,x} \to \sO_{X,x}$ be the projection onto the summand generated by $F^e_* t_1^{q-1} \cdots t_d^{q-1}$ in the above direct sum decomposition, then
\[
\hat{\sO}_{X,x} \otimes \kappa^e \: F_*^e a \mathrm{d}t_1 \wedge \cdots \wedge \mathrm{d}t_d \mapsto \Phi^e(F_*^e a ) \mathrm{d}t_1 \wedge \cdots \wedge \mathrm{d}t_d;
\]
see \cite[Lemma 1.3.6]{BrionKumarFrobeniusSplitting}. In particular, $\kappa_x^e (F^e_* 1) = 0$ as $\Phi^e (F^e_* 1) = 0$.
\end{remark}

\begin{remark}[Generalized Cartier operators] \label{rem.GeneralCartierOperators} We briefly summarize the theory of Cartier operators. For details see \cite[\S 1.3]{BrionKumarFrobeniusSplitting}. Let $\Omega^{\bullet} \coloneqq \Omega_{X/\kay}^{\bullet}$ be the exterior algebra of $X/\kay$, which is a graded-commutative $\sO_X$-algebra. Let $(\Omega^{\bullet},\mathrm{d})$ be the corresponding de~Rham complex. Although this is not a complex of $\sO_X$-modules (as the differentials are not $\sO_X$-linear), $\bigl(F^e_* \Omega^{\bullet},F^e_* \mathrm{d}\bigr)$ is an $\sO_X$-linear complex. Let $Z^{\bullet} \coloneqq Z^{\bullet}_{X/\kay} \subset \Omega^{\bullet}$ be the corresponding graded subspaces of exact forms and $B^{\bullet} \coloneqq B^{\bullet}_{X/\kay} \subset \Omega^{\bullet}$ be the ones of closed forms. Notice that $F^e_* Z^{\bullet}$ is a graded-commutative $\sO_X$-subalgebra of $F^e_*\Omega^{\bullet}$ and $F^e_* B^{\bullet}$ is a graded ideal of $F^e_*Z^{\bullet}$. Thus,
\[
\mathbf{H}^{\bullet}\bigl(F^e_* \Omega^{\bullet} \bigr)= F^e_*Z^{\bullet}/F^e_*B^{\bullet} = F^e_*\bigl(Z^{\bullet}/B^{\bullet}\bigr) = F_*^e \mathbf{H}^{\bullet} (\Omega^{\bullet})
\]
is a graded-commutative $\sO_X$-algebra. The importance of this is that there is a natural isomorphism of graded-commutative $\sO_X$-algebras
\[
C^{-1} = \bigl(C^{-1}\bigr)^{\bullet} \: \Omega^{\bullet} \to \mathbf{H}^{\bullet}\bigl(F^e_* \Omega^{\bullet} \bigr).
\]
The inverse isomorphism $C=C^{\bullet}$ is (are) referred to as the Cartier operator(s) in the literature. For further details, see \cite{BrionKumarFrobeniusSplitting,EsnaultViehwegLecturesOnVanishing,CartierUneNouvelleOperation}. In fact, the composition
\[
F^e_* \omega_X = F^e_* Z^d \to \mathbf{H}^{d}\bigl(F^e_* \Omega^{\bullet} \bigr) \xrightarrow{C^d} \Omega^d = \omega_X
\]
coincides with the Cartier operator $\kappa^e \: F^e_* \omega_X \to \omega_X$ defined via Grothendieck duality. Thus, there is some abuse of terminology. To avoid confusion, we denote the composition \[
F^e_* Z^{\bullet} \to \mathbf{H}^{\bullet}\bigl(F^e_* \Omega^{\bullet} \bigr) \xrightarrow{C^{\bullet}} \Omega^{\bullet}\] 
by $\kappa^e_{\bullet}$. In particular, there is a canonical isomorphism of $\sO_X$-modules $\sE_e \otimes \omega_X \cong F^e_* B^d$, as both are kernels of the same $q^{-1}$-linear map. On the other hand, we have the following exact sequence of $\sO_X$-modules
\[
0 \to \sO_X \xrightarrow{F^{e,\#}} F^e_* \sO_X \xrightarrow{F^e_*\mathrm{d} } F^e_* Z^1 \xrightarrow{\kappa_1^e} \Omega^1 \to 0.
\]
Therefore, there is a canonical isomorphism of $\sO_X$-modules $\sE_e^{\vee} \cong F^e_* B^1$ and a short exact sequence of $\sO_X$-modules
\[
0 \to \sE_e^{\vee} \to F^e_* Z^1 \to \Omega^1 \to 0.
\] 
For ease of notation, we write $\sB_e^i \coloneqq F^e_* B^i$ and $\sZ_e^i \coloneqq F^e_* Z^i$, so that there are exact sequences
\[
0 \to \sB_e^i \to \sZ_e^i \xrightarrow{\kappa_i^e} \Omega^i \to 0,
\]
and further
\[
0 \to \sZ_e^i \to F^e_* \Omega^i \xrightarrow{F^e_* \mathrm{d}} \sB_{e}^{i+1} \to 0.
\]
Summing up, $\sB_e^1$ and $\sB^d_e$ are $\omega$-dual to each other. Further, $(\sB_e^1)^{\vee} \cong \sE_e$ and $(\sB_e^d)^{\vee} \cong (\sE_e \otimes \omega_X)^{\vee} \cong \sE_e^{\vee} \otimes \omega_X^{-1}$.
\end{remark}

\begin{remark}[On the cohomology of $\sE_e^{\vee}$ and ordinarity] \label{rem.Ordinarity}
From the exact sequence
\[
0 \to \bm{\alpha}_q \to \bG_{\mathrm{a}} \xrightarrow{F^e} \bG_{\mathrm{a}} \to 0
\]
on $X_{\mathrm{fl}}$, we obtain canonical isomorphisms $H^i\bigl(X,\sE_e^{\vee}\bigr) \cong H^{i+1}(X_{\mathrm{fl}},\bm{\alpha}_q)$, for $H^i(X_{\mathrm{fl}},\bG_{\mathrm{a}})=H^i(X,\sO_X)$. 
In particular, 
\[
H^0\bigl(X,\sE_e^{\vee}\bigr) \cong \bigl\{ \omega \in H^0\bigl(X,\Omega^1\bigr)  \bigm| \mathrm{d}\omega = 0 \textnormal{ and } C^1 \omega =0  \bigr\} = H^1(X_{\mathrm{fl}},\bm{\alpha}_q).
\]
See \cite[III, Proposition 4.14]{MilneEtaleCohomology}. Following \cite[\S7]{BlochKatopadicetalecohomology}, we say that $X$ is \emph{ordinary} if $H^i(X,B^j)=0$ for all $i,j$. Since $\sB^1_e = F_*^e B^1 = \sE_e^{\vee}$, we have $H^{i}(X,\sE_e^{\vee})=0$ for ordinary varieties, and so the action of Frobenius is injective on $H^i(X,\sO_X)$. Of course, these two notions are equivalent for curves as well as for surfaces as $\sB_e^2=F^e_* B^2 = \sE_e \otimes \omega_X$ by Serre duality.
\end{remark}

The following result will be essential later on.

\begin{proposition}[Naturality of Frobenius trace kernels] \label{rem.NatCarOp}
Let $f\: X \to S$ be a smooth proper morphism between smooth varieties and $e \in \bN$ be a positive integer. Then, there is a canonical commutative diagram between exact sequences
\[
\xymatrix@C=3em{
0 \ar[r] & \sE_{e,X} \ar[r]\ar[d]^-{\varepsilon_{e,X/S}} & F^e_* \omega_X^{1-q} \ar[r]^-{\tau_X^e} \ar[d]^-{\epsilon_{e,X/S}} & \sO_X \ar[r] \ar[d]^-{\cong} & 0 \\
0 \ar[r] & f^*\sE_{e,S} \ar[r] & f^*F^e_* \omega_S^{1-q} \ar[r]^-{f^*\tau_S^e} & f^*\sO_S \ar[r] & 0
}
\]
where $\epsilon_{e,X/S}$ and so $\varepsilon_{e,X/S}$ are surjective. 
\end{proposition}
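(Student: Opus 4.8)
The plan is to recognize both Frobenius traces as Grothendieck-duality trace maps of finite flat morphisms and to exploit the factorization of the absolute Frobenius on $X$ through the relative Frobenius of $f$. For the finite flat morphism $F^e_X\colon X\to X$ one has $\omega_{F^e_X}=\omega_X\otimes (F^e_X)^*\omega_X^{-1}=\omega_X^{1-q}$, and under this identification the duality trace $\Tr_{F^e_X}$ is exactly $\tau^e_X$ (this is the very definition of $\tau^e_X$ recalled in \autoref{sec.FrobeniusCartierOperators}); likewise $\tau^e_S=\Tr_{F^e_S}$. The right-hand vertical map is the canonical isomorphism $\sO_X\cong f^*\sO_S$, so the whole diagram will be produced by suitably factoring $\Tr_{F^e_X}$.

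First I would factor the absolute Frobenius as $F^e_X=W\circ F^e_{X/S}$, where $F^e_{X/S}\colon X\to X'\coloneqq X\times_{S,F^e_S}S$ is the relative Frobenius and $W\colon X'\to X$ is the projection. Since $S$ is smooth, Kunz's theorem makes $F^e_S$, and hence its base change $W$, finite flat, while $f'\colon X'\to S$ is again smooth; as $f$ is smooth, $F^e_{X/S}$ is finite flat as well. Using the adjunction isomorphism $\omega_X\cong f^*\omega_S\otimes\omega_{X/S}$ coming from $0\to f^*\Omega^1_S\to\Omega^1_X\to\Omega^1_{X/S}\to 0$, a short computation with relative dualizing sheaves gives $\omega_{W}=(f')^*\omega_S^{1-q}$ and $\omega_{F^e_{X/S}}=\omega_{X/S}^{1-q}$.

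The middle vertical map $\epsilon_{e,X/S}$ is then the relative trace: applying $W_*$ to $\Tr_{F^e_{X/S}}\colon F^e_{X/S,*}\omega_{X/S}^{1-q}\to\sO_{X'}$ (twisted by $\omega_W$ and combined with the projection formula) yields a map $F^e_{X,*}\omega_X^{1-q}\to W_*(f')^*\omega_S^{1-q}$, and flat base change along the smooth, hence flat, morphism $f$ identifies the target canonically with $f^*F^e_{S,*}\omega_S^{1-q}$. The factorization $f^*\tau^e_S\circ\epsilon_{e,X/S}=\tau^e_X$ is precisely the transitivity of duality traces for the composite $W\circ F^e_{X/S}$, together with the fact that the trace of the base change $W$ of $F^e_S$ is $f^*\Tr_{F^e_S}=f^*\tau^e_S$ (compatibility of the trace with flat base change). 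This makes the right-hand square commute and renders $\epsilon_{e,X/S}$ canonical. Its surjectivity reduces to surjectivity of the relative trace $\Tr_{F^e_{X/S}}$, which is local and split: in coordinates adapted to $f$, namely base parameters $f^*s_1,\dots,f^*s_n$ completed by relative parameters $t_{n+1},\dots,t_d$, it is the projection onto the summand generated by $F^e_*t_{n+1}^{q-1}\cdots t_d^{q-1}$, exactly as in \autoref{rem.LocalDescritptionTraces} applied to the relative directions.

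Finally, since the right square commutes and its right vertical is an isomorphism, $\epsilon_{e,X/S}$ carries $\ker\tau^e_X=\sE_{e,X}$ into $\ker(f^*\tau^e_S)=f^*\sE_{e,S}$ — the bottom row being exact because the defining sequence on $S$ is locally split and hence survives $f^*$ — thereby inducing $\varepsilon_{e,X/S}$. The snake lemma applied to the resulting map of short exact sequences, with $\coker\epsilon_{e,X/S}=0$ and the right vertical an isomorphism, then forces $\coker\varepsilon_{e,X/S}=0$, i.e. $\varepsilon_{e,X/S}$ is surjective. The main obstacle is the third paragraph: establishing the trace factorization and the canonical base-change identification of the target of $\epsilon_{e,X/S}$, which requires the compatibility of Grothendieck duality — trace maps and relative dualizing sheaves — with the Cartesian square of Frobenii; once this is secured, the surjectivity statement and the diagram chase are routine.
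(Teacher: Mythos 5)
Your proposal is correct and follows essentially the same route as the paper: factor the absolute Frobenius through the relative Frobenius of $f$, identify the traces via Grothendieck duality and flat base change, use transitivity of trace maps to produce $\epsilon_{e,X/S}$, and reduce surjectivity to that of the relative trace. The only (immaterial) divergence is at the last step, where you verify surjectivity of $\Tr_{F^e_{X/S}}$ by an explicit local computation in coordinates adapted to $f$, whereas the paper checks it along geometric fibers of $g$, where the relative Cartier operator restricts to the absolute Cartier operator of the smooth fiber.
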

\begin{proof}
We explain first how $\epsilon_{e,X/S}$ is defined.  Consider the following cartesian diagram
\[
\xymatrix{
X \ar@/_/[ddr]_-{f} \ar@/^/[drr]^-{F^e} \ar@{.>}[dr]|-{F^e_{X/S}}\\
&X^{(q)}  \ar[r]^-{G^e} \ar[d]_-{g} & X\ar[d]^-{f} \\
&S \ar[r]^-{F^e} &S}
\]
defining $F^e_{X/S} \: X \to X^{(q)}$ as the $e$-th relative Frobenius morphism of $f$. Note that $f$ and $g$ are smooth and proper whereas $F^e$, $F_{X/S}^e$, and $G^e$ are finite. Since $f$ and $g$ are smooth and proper, they define the exceptional inverse image functors given by: $f^! = \omega_{X/S} \otimes f^*$ and $g^! = \omega_{X^{(q)}/S} \otimes g^* = G^{e,*} \omega_{X/S} \otimes g^*$, where this last equality follows from
\[
\omega_{X^{(q)}/S} = \det \Omega_{X^{(q)}/S} = \det G^{e,*}\Omega_{X/S} =  G^{e,*} \det \Omega_{X/S} = G^{e,*} \omega_{X/S}.
\]
Thus, $\omega_X = f^! \omega_S $ and $\omega_{X^{(q)}} = g^! \omega_S = G^{e,*}\omega_{X/S} \otimes g^* \omega_S$. The projection formula then yields
\begin{equation} \label{eqn.Idenification}
G^e_* \omega_{X^{(q)}} \cong \omega_{X/S} \otimes G^e_* g^* \omega_S \cong \omega_{X/S} \otimes f^* F_*^e \omega_S = f^! F_*^e \omega_S,
\end{equation}
where the natural transformation $f^* F_*^e \to G^e_* g^*$ is an isomorphism as $f$ is flat. In addition, Grothendieck trace $\gamma^e \: G^e_* \omega_{X^{(q)}} \to \omega_X$ associated to $G^e$ is going to be given by $f^! \kappa_S^e \: f^! F^e_* \omega_S \to f^! \omega_S = \omega_X$ under the natural identification \autoref{eqn.Idenification}. 

By the naturality of Grothendieck trace maps, $\kappa_X^e \: F^e_* \omega_X \to \omega_X$ is the composition of the corresponding traces of $F^e_{X/S}$ and $G^e$. More precisely, if $\kappa_{X/S}^e \: F^e_{X/S, *} \omega_X \to \omega_{X^{(q)}}$ is the relative Cartier operator, then $\kappa_X^e = \gamma^e \circ G^e_* \kappa_{X/S}^e$. 

Putting these observations together, we obtain a canonical factorization
\[
\xymatrix{
F^e_* \omega_X \ar[r]^-{} \ar[rd]_-{\kappa_X^e} & f^! F^e_* \omega_S \ar[d]^-{f^! \kappa^e_S} \\
& \omega_X
}
\]
where the horizontal arrow corresponds to $G_*^e \kappa^e_{X/S}$ under the isomorphism \autoref{eqn.Idenification}. Twisting this diagram by $\omega_X^{-1}$ yields:
\[
\xymatrix@C=3em{
F^e_* \omega_X^{1-q} \ar[r]^-{\epsilon_{e,X/S}} \ar[rd]_-{\tau_X^e} & f^* F^e_* \omega_S^{1-q} \ar[d]^-{f^* \tau^e_S} \\
& \sO_X
}
\]
where the horizontal morphism is the map we aimed to define. 

Finally, we explain why $\epsilon_{e,X/S}\: F^e_* \omega_X^{1-q} \to f^* F^e_* \omega_S^{1-q}$ and so $\varepsilon_{e,X/S} \: \sE_{e,X} \to f^* \sE_{e,S}$ are surjective. It suffices to show that $G^e_* \kappa_{X/S}^e$ is surjective and; since $G^e$ is affine, that $\kappa^e_{X/S}$ is surjective, which can be checked along the geometric fibers of $g$. Note that the pullback of $\kappa_{X/S}^e$ along a geometric point $\bar{s} \to S$ is the (absolute) Cartier operator of the geometric fiber $X_{\bar{s}}$, i.e., $(\kappa_{X/S}^e)_{\bar{s}} = \kappa^e_{X_{\bar{s}}}$ \cite[Lemma 2.16]{PatakfalviSchwedeZhangSingularitiesFamilies}. Since $f$ is smooth, $X_{\bar{s}}$ is regular, and so $\kappa^e_{X_{\bar{s}}}$ is surjective.  
\end{proof}

\section{Frobenius Pushforwards of Invertible Sheaves on Split Projective Bundles} \label{sec.FrobpushforwardsProjectiveBundles}

The goal of this section is to compute explicitly the Frobenius pushforward of an invertible sheaf on a (split) projective bundle (e.g. Frobenius pushforwards of invertible sheaves on projective spaces). From this, we will conclude that $\sE_e$ is ample for projective spaces. We will apply these calculations in \autoref{sec.Examples} to compute $\sE_e$ for several other projective varieties (including some mildly singular ones). Some of those computations will be crucial in our main theorems shown in \autoref{sec.MainSection}.

Let $X$ be a smooth variety and $\sF$ be a locally free sheaf of rank $d+1$ on $X$. Also fix $0 \neq e \in \bN$. We denote by $\varpi \: \bV(\sF) \to X$ and $\pi \: \bP(\sF) \to X$ the respective vector and projective bundles. To be clear on what convention we follow, we have: 
\[
\varpi_* \sO_{\bV(\sF)} = \Sym (\sF) = \bigoplus_{i \in \bZ} \pi_* \sO_{\bP(\sF)}(i).
\]
When no confusion is likely to occur, we may drop $\sF$ from our notation. We set $\sS \coloneqq  \Sym \sF$, $\sS_i = \Sym^i \sF$, and $\sS_+ = \bigoplus_{i \geq 1} \sS_i$. We say that $\varpi$ and $\pi$ are \emph{split} if $\sF$ decomposes as a direct sum of invertible sheaves. In this case, say $\sF = \sL_0 \oplus \cdots \oplus \sL_d$, we write
\[
\sS_i = \bigoplus_{i_0 + \cdots +i_d=i} \sL_0^{i_0} \otimes \cdots \otimes \sL_d^{i_d}
\]

Recall that $\varpi^* \: \Pic X \to \Pic \bV(\sF)$ is an isomorphism. Then, we have the following.

\begin{proposition} \label{pro.PuschForwardVectorbundle}
With notation as above, let $\sN$ be an invertible sheaf on $X$ and suppose that $\sF = \sL_0 \oplus \cdots \oplus \sL_d$. Then, there is an isomorphism
\[
\Lambda \: \bigoplus_{0 \leq i_0, \ldots ,i_d \leq q-1} \varpi^* F^e_* \big(\sL_0^{i_0} \otimes \cdots \otimes \sL_d^{i_d} \otimes \sN \big)  \xrightarrow{\cong} F^e_* \varpi^* \sN. 
\]
In particular, if $\sF=\sL$ is an invertible sheaf then $\bigoplus_{i=0}^{q-1} \varpi^* F^e_* \sL^{i} \xrightarrow{\cong} F^e_* \sO_{\bV(\sL)}$.

\end{proposition}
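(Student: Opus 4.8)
The plan is to use that $\varpi\: \bV(\sF)\to X$ is affine, so that the asserted isomorphism of coherent sheaves on $\bV(\sF)$ may be checked after applying the functor $\varpi_*$, which induces an equivalence between quasi-coherent sheaves on $\bV(\sF)$ and quasi-coherent $\Sym\sF$-modules on $X$ (recall $\varpi_*\sO_{\bV(\sF)}=\Sym\sF$). Thus I will compute $\varpi_*$ of both sides and produce a matching direct-sum decomposition of $\Sym\sF$-modules. Throughout I abbreviate $\underline{i}=(i_0,\ldots,i_d)$ and $\sL^{\underline{i}}=\sL_0^{i_0}\otimes\cdots\otimes\sL_d^{i_d}$, and I write $q=p^e$.

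For the right-hand side, the naturality of Frobenius gives $\varpi\circ F^e_{\bV(\sF)}=F^e_X\circ\varpi$, whence $\varpi_*F^e_*\varpi^*\sN=F^e_{X,*}\,\varpi_*\varpi^*\sN=F^e_{X,*}(\sN\otimes_{\sO_X}\Sym\sF)$, the last equality being the projection formula for the affine map $\varpi$. The essential subtlety is that the $\Sym\sF$-module structure here is the one \emph{twisted by $F^e$}: a local section of the degree-one piece $\sL_j\subset\Sym\sF$ acts through its $q$-th power. For the left-hand side, the projection formula instead gives $\varpi_*\varpi^*F^e_*(\sL^{\underline{i}}\otimes\sN)=F^e_{X,*}(\sL^{\underline{i}}\otimes\sN)\otimes_{\sO_X}\Sym\sF$, with the \emph{standard} (untwisted) $\Sym\sF$-action on the $\Sym\sF$-factor.

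The heart of the argument is to decompose the right-hand module by residues modulo $q$. Since $\sF$ is split, $\Sym\sF=\bigoplus_{\underline{a}\ge 0}\sL^{\underline{a}}$, so $\sN\otimes\Sym\sF=\bigoplus_{\underline{a}\ge 0}\sN\otimes\sL^{\underline{a}}$. Writing each exponent vector via the euclidean algorithm as $\underline{a}=q\underline{c}+\underline{i}$ with $0\le i_j\le q-1$ and grouping, the twisted $\Sym\sF$-action (which sends $\underline{a}\mapsto\underline{a}+q\underline{b}$) preserves the residue class $\underline{i}$; hence the module splits as the direct sum over $\underline{i}$ of the submodules $\bigoplus_{\underline{c}\ge 0}\sN\otimes\sL^{\underline{i}}\otimes\sL^{q\underline{c}}$. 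Applying $F^e_{X,*}$ termwise and invoking the projection-formula identity $F^e_{X,*}\bigl(\sM\otimes F^{e,*}_X\sL^{\underline{c}}\bigr)=F^e_{X,*}\sM\otimes\sL^{\underline{c}}$ (recorded in the footnote, using $F^{e,*}_X\sL^{\underline{c}}=\sL^{q\underline{c}}$) with $\sM=\sN\otimes\sL^{\underline{i}}$ turns the $\underline{i}$-th piece into $F^e_{X,*}(\sN\otimes\sL^{\underline{i}})\otimes_{\sO_X}\bigl(\bigoplus_{\underline{c}\ge 0}\sL^{\underline{c}}\bigr)=F^e_{X,*}(\sN\otimes\sL^{\underline{i}})\otimes_{\sO_X}\Sym\sF$. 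This is precisely the $\underline{i}$-th summand of $\varpi_*$ of the left-hand side, and under this identification the residue-preserving twisted action matches the standard action (both realize the shift $\underline{c}\mapsto\underline{c}+\underline{b}$). Summing over the $q^{d+1}$ residues yields $\Lambda$; the final ``in particular'' is the case $d=0$, $\sN=\sO_X$.

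The step demanding the most care is the bookkeeping of \emph{module structures} rather than mere additive isomorphism: I must check that the residue-grouping is compatible with the two different $\Sym\sF$-actions (twisted versus standard), so that $\Lambda$ is genuinely $\Sym\sF$-linear and therefore descends to an isomorphism of sheaves on $\bV(\sF)$. A clean alternative that makes this compatibility automatic is to factor $F^e_{\bV(\sF)}=G^e\circ F^e_{\bV(\sF)/X}$ through the relative Frobenius, identify $\bV(\sF)^{(q)}=\bV(\sL_0^q\oplus\cdots\oplus\sL_d^q)$ with structure map $g$, compute $F^e_{\bV(\sF)/X,*}\varpi^*\sN=\bigoplus_{0\le\underline{i}\le q-1}g^*(\sL^{\underline{i}}\otimes\sN)$ from the canonical monomial basis $\{z^{\underline{i}}\}_{0\le i_j\le q-1}$ of $\sO_{\bV(\sF)}$ over $\sO_{\bV(\sF)^{(q)}}$ (which globalizes by naturality), and then push down along $G^e$, using flat base change over the cartesian square ($\varpi$ is flat, $F^e_X$ is finite) to replace $G^e_*g^*$ by $\varpi^*F^e_{X,*}$. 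This reproduces the same decomposition while sidestepping the explicit twisted-action verification.
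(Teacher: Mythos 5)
Your proposal is correct and follows essentially the same route as the paper: push forward along the affine map $\varpi$, use the projection formula to identify each graded piece, and invoke the euclidean algorithm (your residue-class decomposition of exponent vectors) to see that the resulting $\sS$-linear map is an isomorphism. Your extra care about the twisted versus standard $\Sym\sF$-module structures, and the alternative via relative Frobenius, are welcome elaborations of points the paper treats more tersely, but they do not constitute a different argument.
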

\begin{proof}
We explain first what the $\sO_{\bV}$-linear map $\Lambda$ is. Since $\varpi$ is affine, we may equivalently specify what the $\sS$-linear map $\varpi_* \Lambda$ is, which corresponds (by the projection formula and the functoriality of Frobenius) to the description of a $\sS$-linear map
\[
\bigoplus_{0 \leq i_0,\ldots ,i_d \leq q-1}  \sS \otimes F^e_* (\sL_0^{i_0} \otimes \cdots \otimes \sL_d^{i_d} \otimes \sN)  \to  F^e_* (\sS \otimes \sN).
\]
Such description is done by taking the direct sum $\bigoplus_{0 \leq i_0,\ldots ,i_d \leq q-1} \bigoplus_j \bigoplus_{j_0 + \cdots + j_d=j}$ over the structural maps
\begin{align*}
\sL_0^{j_0} \otimes  \cdots \otimes \sL_d^{j_d} \otimes F^e_* ( \sL_0^{i_0} \otimes \cdots \otimes \sL_d^{i_d} \otimes \sN)  \xrightarrow{\cong} &F^e_* (\sL_0^{j_0q+i_0} \otimes  \cdots \otimes \sL_d^{j_dq+i_d} \otimes \sN)\\
\to& F^e_* (\sS \otimes \sN).
\end{align*}
where the first arrow is simply the projection formula isomorphism associated to $F^e$. 

Finally, by the euclidean algorithm, we further see why $\varpi_* \Lambda$ and so $\Lambda$ are isomorphisms.
\end{proof}

Recall that $\bZ \to \Pic \bP(\sF)$; $1 \mapsto \sO_{\bP}(1)$, and $\pi^* \: \Pic X \to \Pic \bP(\sF)$ induce an isomorphism $\bZ \oplus \Pic X  \xrightarrow{\cong} \Pic \bP(\sF)$. We then have the following.

\begin{proposition} \label{pro.PushforwardProjectiveBundles}
With notation as above, let $\sN$ be an invertible sheaf on $X$ and $n \in \bZ$. Write $n=kq+m$ with $0 \leq m \leq q-1$. Suppose that $\sF = \sL_0 \oplus \cdots \oplus \sL_d$. Then, there is an isomorphism
\[
\Pi \: \bigoplus_{i=0}^d{\sO_{\bP}(k-i)} \otimes \bigoplus_{\substack{0\leq i_0,\ldots,i_d \leq q-1 \\  i_0+\cdots +i_d = m+iq}} \pi^* F^e_* (\sL_0^{i_0} \otimes \cdots \otimes \sL_d^{i_d} \otimes \sN) \xrightarrow{\cong} F^e_* (\sO_{\bP}(n) \otimes \pi^*\sN) 
\]
In particular, if $\sL_i = \sO_X$ for all $i$ then:
\[
\Pi \: \pi^* F^e_* \sN \otimes \bigoplus_{i=0}^d{\sO_{\bP}(k-i)^{\oplus a(i,m;d,e)}}  \xrightarrow{\cong} F^e_* (\sO_{\bP}(n) \otimes \pi^*\sN) 
\]
where $a(i,m;d,e)$ is the number of combinations of $m+iq$ with $d+1$ parts in the interval $[0,q-1]$. Concretely,
\[
a(i,m;d,e) \coloneqq \sum_{j+k=i}{(-1)^k\binom{d+1}{k}\binom{jq+m+d}{d}}
\]
Likwewise, if $\sL_0=\sL$ and $\sL_1,\ldots, \sL_d=\sO_X$ then:
\[
\Pi \: \bigoplus_{i=0}^d{\sO_{\bP}(k-i)} \otimes \bigoplus_{j=0}^{q-1} \pi^* F^e_* (\sL^{j} \otimes \sN)^{\oplus a(i,m-j;d-1,e)} \xrightarrow{\cong} F^e_* (\sO_{\bP}(n) \otimes \pi^*\sN) 
\]
\end{proposition}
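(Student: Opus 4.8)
The plan is to descend the vector-bundle isomorphism of \autoref{pro.PuschForwardVectorbundle} to $\bP(\sF)=\Proj_X\sS$ by keeping track of the $\sO_{\bP}(1)$-grading. Recall that a morphism between coherent sheaves on $\bP(\sF)$ is an isomorphism as soon as it induces isomorphisms after applying $\pi_*\bigl(-\otimes\sO_{\bP}(j)\bigr)$ for all $j\gg 0$, by the $\Proj$ correspondence together with $\pi_*\sO_{\bP}(j)=\sS_j$. Since both sides of $\Pi$ are locally free (by Kunz's theorem), I would first construct $\Pi$ as the direct sum of the projection-formula structural morphisms underlying $\Lambda$, and then verify the isomorphism degree by degree.

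First I would compute the graded piece of the target. The projection formula together with $F^{e,*}\sO_{\bP}(j)=\sO_{\bP}(jq)$ gives $F^e_*\bigl(\sO_{\bP}(n)\otimes\pi^*\sN\bigr)\otimes\sO_{\bP}(j)\cong F^e_*\bigl(\sO_{\bP}(n+jq)\otimes\pi^*\sN\bigr)$, and the naturality of Frobenius $\pi_* F^e_{\bP,*}=F^e_{X,*}\pi_*$ then yields, for $n+jq\geq 0$,
\[
\pi_*\Bigl(F^e_*\bigl(\sO_{\bP}(n)\otimes\pi^*\sN\bigr)\otimes\sO_{\bP}(j)\Bigr)\cong F^e_{X,*}\bigl(\sS_{n+jq}\otimes\sN\bigr).
\]
Next I would expand this using the euclidean algorithm, which is exactly the degreewise content of \autoref{pro.PuschForwardVectorbundle}: writing each exponent as $a_\ell q+i_\ell$ with $0\leq i_\ell\leq q-1$ and using $F^{e,*}\sL_\ell^{a_\ell}=\sL_\ell^{a_\ell q}$, the projection formula decomposes $F^e_{X,*}\bigl(\sS_l\otimes\sN\bigr)$ as the sum, over all $(i_0,\dots,i_d)$ with $0\leq i_\ell\leq q-1$ and $q\mid\bigl(l-\sum_\ell i_\ell\bigr)$, of the sheaves $\sS_{(l-\sum_\ell i_\ell)/q}\otimes F^e_*\bigl(\sL_0^{i_0}\otimes\cdots\otimes\sL_d^{i_d}\otimes\sN\bigr)$. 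Specializing $l=n+jq$ forces $\sum_\ell i_\ell\equiv m\pmod q$, so the admissible weights are $\sum_\ell i_\ell=m+iq$ for $i=0,\dots,d$ (larger values produce empty index sets and hence zero summands), and in that case $(l-\sum_\ell i_\ell)/q=k-i+j$. This matches the degree-$j$ piece of the left-hand side summand by summand, the factor $\sO_{\bP}(k-i)$ accounting precisely for $\sS_{k-i+j}=\pi_*\bigl(\sO_{\bP}(k-i)\otimes\sO_{\bP}(j)\bigr)$. Taking all $j\gg 0$ shows $\Pi$ is an isomorphism.

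The two displayed special cases then follow by specialization. When every $\sL_\ell=\sO_X$ the sheaf $F^e_*\bigl(\sL_0^{i_0}\otimes\cdots\otimes\sL_d^{i_d}\otimes\sN\bigr)=F^e_*\sN$ does not depend on $(i_0,\dots,i_d)$, so the inner sum merely counts the compositions of $m+iq$ into $d+1$ parts in $[0,q-1]$; the closed form for $a(i,m;d,e)$ is the standard inclusion--exclusion count for bounded compositions. When $\sL_0=\sL$ and $\sL_1=\cdots=\sL_d=\sO_X$, fixing $i_0=j$ leaves $i_1+\cdots+i_d=(m-j)+iq$ to be distributed among $d$ slots, contributing the multiplicity $a(i,m-j;d-1,e)$ to the summand $F^e_*\bigl(\sL^j\otimes\sN\bigr)$.

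The routine but genuinely delicate point --- the step I expect to absorb most of the care --- is the weight bookkeeping: checking that the structural morphisms assemble into a single sheaf map whose restriction to each graded degree is the isomorphism above, rather than merely matching graded pieces abstractly. This is precisely where the euclidean division $n=kq+m$ and the reindexing $\sum_\ell i_\ell=m+iq$ with twist $k-i$ must be aligned; once set up this way, the claim reduces cleanly to \autoref{pro.PuschForwardVectorbundle}.
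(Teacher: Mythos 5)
Your proposal is correct and follows essentially the same route as the paper: the paper packages the argument via the $\frac{1}{q}\bZ$-grading on $F^e_*(\sS\otimes\sN)$, takes the degree-$(n \bmod q)$ graded summand of the isomorphism $\Lambda$ from \autoref{pro.PuschForwardVectorbundle} (with the twist $\sS(-\lfloor(i_0+\cdots+i_d)/q\rfloor)$ recording exactly your $k-i$), and applies $\Gamma_*$ followed by $\sim$, which is the same degreewise identification you carry out by testing against $\pi_*(-\otimes\sO_{\bP}(j))$ for $j\gg 0$. The combinatorial evaluation of $a(i,m;d,e)$ by inclusion--exclusion likewise matches the paper's generating-function computation with $\bigl((1-t^q)/(1-t)\bigr)^{d+1}$.
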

\begin{proof}
Observe that $F^e_* \sS$ is a $\frac{1}{q}\bZ$-graded $\sS$-module (where $F^e$ is the $e$-th Frobenius homomorphism on $\sS$) by declaring elements in $F^e_*\sS_i \subset F^e_*\sS$ to sit in degree $i/q$. Thus, if $x \in \sS_i$ and $F^e_* y \in F^e_* \sS_j \subset (F^e_* \sS)_{j/q}$ then 
\[
xF^e_* y = F^e_* (x^q y) \in F^e_* \sS_{iq+j} \subset (F^e_*\sS)_{i+j/q}.
\] 
Of course, we meant the above description to be on local sections. The same applies for $\sS \otimes \pi^*\sN $ in place of $\sS$. In general, if $\sM$ is a $\frac{1}{q}\bZ$-graded $\sS$-module, we can write
\[
\sM = \bigoplus_{n=0}^{q-1} \bigoplus_{i\in \bZ} \sM_{i+n/q}
\]
where the $\sM^{(n)} \coloneqq \bigoplus_{i\in \bZ} \sM_{i+n/q}$ are $\bZ$-graded $\sS$-modules. In other words, $\sM$ admits a graded direct sum decomposition
\[
\sM = \bigoplus_{n=0}^{q-1} \sM^{(n)},
\]
where $\sM^{(n)}$ is a $\bZ$-graded direct summand of $\sM$. In the particular case $\sM = F^e_* \sS$, we have
\[
(F^e_*\sS)^{(n)} = \bigoplus_{i\in \bZ} F^e_* \sS_{iq+n}.
\]
Note that 
\begin{align*}
\Gamma_*\big(F^e_*(\sO_{\bP}(n) \otimes \pi^*\sN )\big)& = \bigoplus_{i \in \bZ} \pi_* \big( \sO_{\bP}(i) \otimes F^e_*(\sO_{\bP}(n) \otimes \pi^*\sN ) \big) \\
&= \bigoplus_{i\in \bZ} \pi_* \big( F^e_*( \sO_{\bP}(iq+n) \otimes \pi^*\sN ) \big) \\
&=\bigoplus_{i\in \bZ} F^e_* \big( \pi_*( \sO_{\bP}(iq+n)\otimes \pi^*\sN ) \big)\\
&=\bigoplus_{i\in \bZ} F^e_* \big( \pi_*\sO_{\bP}(iq+n) \otimes \sN \big)\\
&=\bigoplus_{i\in \bZ} F^e_* \big( \sS_{iq+n} \otimes \sN \big)\\
&= \big(F^e_*(\sS \otimes \sN)\big)^{(n)},
\end{align*}
where, by an abuse of notation, we wrote equality instead of isomorphism when we applied the projection formula. In particular, we have a natural isomorphism
\[
\Bigl(\big(F^e_*( \sS \otimes \sN)\big)^{(n)} \Bigr)^{\sim} \xrightarrow{\cong} F^e_*( \sO_{\bP}(n) \otimes \pi^*\sN ).
\]
We compute $\big(F^e_*(\sS \otimes \sN)\big)^{(n)}$ next. By \autoref{pro.PuschForwardVectorbundle} and its proof, 
\begin{align*}
F^e_* (\sS \otimes \sN) &\xleftarrow{\cong} \bigoplus_{0\leq i_0,\ldots,i_d \leq q-1} \sS \otimes F^e_* (\sL_0^{i_0} \otimes \cdots \otimes \sL_d^{i_d} \otimes \sN)\\
&= \bigoplus_{n=0}^{q-1} \bigoplus_{ \substack{0\leq i_0,\ldots,i_d \leq q-1\\i_0 + \cdots + i_d \equiv n \bmod q}} \sS \otimes F^e_* (\sL_0^{i_0} \otimes \cdots \otimes \sL_d^{i_d} \otimes \sN),
\end{align*}
as $\sS$-modules. However, by definition, the $\sS$-linear map
\begin{equation}\label{eqn.Grading}
\sS \otimes F^e_* (\sL_0^{i_0} \otimes \cdots \otimes \sL_d^{i_d} \otimes \sN) \to F^e_*(\sS \otimes \sN) 
\end{equation}
becomes graded if we declare
\[
\big( 
\sS \otimes F^e_* (\sL_0^{i_0} \otimes \cdots \otimes \sL_d^{i_d} \otimes \sN) \big)_i \coloneqq \sS_{i-\lfloor (i_0 + \cdots + i_d)/q \rfloor} \otimes F^e_* (\sL_0^{i_0} \otimes \cdots \otimes \sL_d^{i_d} \otimes \sN).
\]
In other words, the $\sS$-linear map \autoref{eqn.Grading} defines a graded homomorphism of $\sS$-modules:
\[
\sS\Bigl(-\bigl\lfloor (i_0+ \cdots + i_d)/q \bigr\rfloor\Bigr) \otimes F^e_* (\sL_0^{i_0} \otimes \cdots \otimes \sL_d^{i_d} \otimes \sN) \to F^e_*(\sS \otimes \sN) 
\]
In conclusion,
\[
\big(F^e_*(\sS \otimes \sN)\big)^{(n)} \xleftarrow{\cong}\bigoplus_{\substack{0\leq i_0,\ldots,i_d \leq q-1 \\  i_0+\cdots +i_d \equiv n \bmod q}} \sS\left(-\frac{i_0+\cdots+i_d-n}{q}\right)\otimes F^e_* (\sL_0^{i_0} \otimes \cdots \otimes \sL_d^{i_d} \otimes \sN).
\]
as $\bZ$-graded $\sS$-modules. 

Putting everything together:
\[
\Pi \: \bigoplus_{\substack{0\leq i_0,\ldots,i_d \leq q-1 \\  i_0+\cdots +i_d \equiv n \bmod q}} \sO_{\bP}\left(-\frac{i_0+\cdots+i_d-n}{q}\right)\otimes \pi^*F^e_* (\sL_0^{i_0} \otimes \cdots \otimes \sL_d^{i_d} \otimes \sN) \xrightarrow{\cong} F^e_* (\sO_{\bP}(n)\otimes \pi^* \sN)
\]
On the other hand, the values of $i_0 + \cdots +i_d $ congruent to $n$ modulo $q$ subject to $0 \leq i_0,\ldots,i_d \leq q-1$ are $m,m+q,m+2q,\ldots,m+dq$. Therefore,
\[
\Pi \: \bigoplus_{i=0}^d{\sO_{\bP}(k-i)} \otimes \bigoplus_{\substack{0\leq i_0,\ldots,i_d \leq q-1 \\  i_0+\cdots +i_d = m+iq}} \pi^* F^e_* (\sL_0^{i_0} \otimes \cdots \otimes \sL_d^{i_d} \otimes \sN) \xrightarrow{\cong} F^e_* (\sO_{\bP}(n) \otimes \pi^*\sN) 
\]
Of course, if $\sL_0,\ldots,\sL_d = \sO_{X}$, then 
\[
\bigoplus_{\substack{0\leq i_0,\ldots,i_d \leq q-1 \\  i_0+\cdots +i_d = m+iq}} \pi^* F^e_* (\sL_0^{i_0} \otimes \cdots \otimes \sL_d^{i_d} \otimes \sN) = (\pi^*F^e_*\sN)^{\oplus a(i,m;d,e)}
\]
where $a(i,m;d,e)$ is the number of combinations of $m+iq$ with $d+1$ parts in the interval $[0,q-1]$. In other words, $a(i,m;d,e)$ is the coefficient of $t^{m+iq}$ in the following power series
\begin{align*}
\bigl(1+t+t^2+\cdots +t^{q-1}\bigr)^{d+1} = \left(\frac{1-t^q}{1-t}\right)^{d+1}&=(1-t^q)^{d+1}\cdot \sum_{l \geq 0}{\binom{l+d}{d}t^l} \\
&=\sum_{\substack{0\leq k \leq d+1 \\ l \geq 0}}{(-1)^k\binom{d+1}{k}\binom{l+d}{d} t^{kq+l} } \\
& = \sum_{\substack{0\leq k \leq d+1 \\ 0 \leq m \leq q-1 \\ j \geq 0}}{(-1)^k\binom{d+1}{k}\binom{jq+m+d}{d} t^{m+(k+j)q} }
\end{align*}
Therefore,
\[
a(i,m;d,e) = \sum_{j+k=i}{(-1)^k\binom{d+1}{k}\binom{jq+m+d}{d}} = \sum_{j=0}^i{(-1)^{i-j}\binom{d+1}{i-j}\binom{jq+m+d}{d}}.
\]
The proposition then follows.
\end{proof}

\begin{remark} Given $d,e \in \bN \smallsetminus \{0\}$ and $i,m \in \bZ$, the integer $a(i,m;d,e) \neq 0$ if and only if $0 \leq m+iq \leq (d+1)(q-1)$. For instance, $a(d,q-1;d,e) = 0$ and
\[
a(i,0;d,e) = \sum_{j=0}^i{(-1)^{i-j}\binom{d+1}{i-j} \binom{jq+d}{d}} \neq 0 \text{ if and only if } i=0,\ldots,d.
\]
An easy case worth keeping in mind is $a(0,m;d,e)= \binom{m+d}{d}$. In general, observe that $a(i,m;d,e)$ is a polynomial in $q$ of degree $d$. Its leading coefficient can be computed as follows. First, we note that
\[
\lim_{e \rightarrow \infty}\frac{a(i,0;d,e)}{q^d/d!} = \sum_{j=0}^i{(-1)^{i-j}\binom{d+1}{i-j} j^d } = \sum_{j=0}^i{(-1)^{j}\binom{d+1}{j} (i-j)^d } = A(d,i)
\]
where the $A(d,i)$ are the so-called \emph{Eulerian numbers}; see \cite[\S 3]{SinghFSignatureOfAffineSemigroup} and the references therein. Thus, the leading coefficient of $a(i,0;d,e)$ is $A(d,i)/d!$, which turns out to be the $F$-signature of the cone singularity defined by the Segre embedding of $\bP^{i-1} \times \mathbb{P}^{(d-1)-(i-1)}$. More generally,
\[
\lim_{e \rightarrow \infty}\frac{a(i,m;d,e)}{q^d/d!} = \sum_{j=0}^i{(-1)^{i-j}\binom{d+1}{i-j} (j+m/q)^d }.
\]
Of course, one verifies that $\sum_{i=0}^d{a(i,m;d,e)} = q^d$, $\sum_{i=0}^d {A(d,i)} = d!$. 
\end{remark}

\begin{question}
Can we generalize \autoref{pro.PushforwardProjectiveBundles} to the case where $\sF$ is not fully decomposable? For instance, can we describe the case of elliptic ruled surfaces, i.e., the case where $\sF$ is a indecomposable rank-$2$ locally free sheaf on an elliptic $C$? In such a case, $\sF$ is necessarily an extension of invertible sheaves; see \cite[V, Corollary 2.7, Exercise 3.3]{Hartshorne}. The main difficulty seems to be finding a nice description of $\sS_i=\Sym^i \sF$. It is unclear, at least for the authors, whether \cite[II, Exercise 5.16.(c)]{Hartshorne} is good enough for such a purpose. It is worth noting that we work out very explicitly the case of Hirzebruch surfaces in \autoref{ex.HirzebruchSurfaces} below. Already in this much simpler case, we can note some complexity emerging.
\end{question}

\begin{corollary}
On $X=\bP^d$, we then see that
\[
\sE_e = \sB_e^{1,\vee} \cong \bigoplus_{i=1}^d \sO(i)^{\oplus a(i,0;d,e)}
\quad \text{and} \quad
\sE_e^{\vee} \otimes \omega^{-1} = \sB_e^{d,\vee} \cong  \bigoplus_{i=1}^d \sO(d+1-i)^{\oplus a(i,0;d,e)}
\]
are both ample.
\end{corollary}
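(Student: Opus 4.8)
The plan is to realize $\bP^d$ as the split projective bundle $\pi \: \bP^d = \bP(\sO^{\oplus(d+1)}) \to \Spec\kay$ and simply read off $F^e_* \sO_{\bP^d}$ from the special case of \autoref{pro.PushforwardProjectiveBundles} in which all $\sL_i = \sO$, the base is a point, $\sN = \sO$, and $n = 0$ (so $k = m = 0$). Since $\kay$ is perfect, $F^e_* \sO_{\Spec\kay} = \sO_{\Spec\kay}$, and the proposition yields a canonical isomorphism
\[
F^e_* \sO_{\bP^d} \cong \bigoplus_{i=0}^d \sO(-i)^{\oplus a(i,0;d,e)}.
\]
Here $a(0,0;d,e) = \binom{d}{d} = 1$, so there is exactly one copy of $\sO(0) = \sO$ among the summands.

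Next I would match this decomposition against the defining sequence \autoref{eqn.FrobeniusSES}, namely $0 \to \sO \xrightarrow{F^{e,\#}} F^e_* \sO \to \sE_e^{\vee} \to 0$. Writing $F^e_* \sO \cong \sO \oplus \sG$ with $\sG \coloneqq \bigoplus_{i=1}^d \sO(-i)^{\oplus a(i,0;d,e)}$, the composite of $F^{e,\#}$ with the projection onto $\sG$ is a morphism $\sO \to \sG$; it vanishes because $\Hom(\sO, \sO(-i)) = H^0(\bP^d, \sO(-i)) = 0$ for every $i \geq 1$. Hence $F^{e,\#}$ factors through the distinguished summand $\sO(0)$, and the induced map $\sO \to \sO(0)$ is multiplication by a scalar in $H^0(\bP^d, \sO) = \kay$ that is nonzero since $F^{e,\#}$ is a (split) injection. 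Therefore $F^{e,\#}$ is, up to this scalar, the inclusion of the $\sO(0)$ summand, and taking cokernels gives $\sE_e^{\vee} \cong \sG = \bigoplus_{i=1}^d \sO(-i)^{\oplus a(i,0;d,e)}$. Dualizing yields $\sE_e = \sB_e^{1,\vee} \cong \bigoplus_{i=1}^d \sO(i)^{\oplus a(i,0;d,e)}$, which is ample as a direct sum of ample invertible sheaves.

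For the second statement I would invoke the identification $\sB_e^{d,\vee} \cong \sE_e^{\vee} \otimes \omega^{-1}$ from \autoref{rem.GeneralCartierOperators} together with $\omega^{-1} = \sO(d+1)$ on $\bP^d$. Twisting the formula for $\sE_e^{\vee}$ gives
\[
\sB_e^{d,\vee} \cong \bigoplus_{i=1}^d \sO(d+1-i)^{\oplus a(i,0;d,e)},
\]
and as $i$ runs over $1, \ldots, d$ the twists $d+1-i$ run over $d, \ldots, 1$, all positive, so this too is ample. The computation presents no serious obstacle; the only point requiring care is the canonical matching of the image of $F^{e,\#}$ with the unique $\sO(0)$ summand, and the vanishing $H^0(\bP^d, \sO(-i)) = 0$ for $i \geq 1$ settles this cleanly.
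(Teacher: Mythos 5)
Your argument is correct and is exactly the route the paper intends: the corollary is stated without proof as an immediate consequence of \autoref{pro.PushforwardProjectiveBundles} applied with $\sF=\sO^{\oplus(d+1)}$ over a point, combined with the sequence \autoref{eqn.FrobeniusSES} and the duality $\sB_e^{d,\vee}\cong\sE_e^{\vee}\otimes\omega^{-1}$ from \autoref{rem.GeneralCartierOperators}. Your identification of $F^{e,\#}$ with the inclusion of the unique $\sO(0)$ summand via the vanishing of $H^0(\bP^d,\sO(-i))$ for $i\geq 1$ is the right way to make the implicit cokernel computation precise.
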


\subsection{Graded Kunz's Theorem} \label{sec.GradedKUNZ}

Let $S = \bigoplus_{i \geq 0} S_i$ be an $\bN$-graded ring that is finitely generated by $S_1$ over $S_0=\kay$. Let us set $\fram =S_{+} = \bigoplus_{i \geq 1} S_i$. The following basic observations were made in the proof \autoref{pro.PushforwardProjectiveBundles}. $F^e_* S = \bigoplus_{i \in \bN} F^e_* S_i$ is a $\frac{1}{q}\bZ$-graded $S$-module by declaring that the summand $F^e_*S_i$ sits in degree $i/q$. Thus, if $x \in S_i$ and $F^e_* y \in F^e_*S_j \subset (F^e_* S)_{j/q} $ then $xF^e_* y = F^e_* (x^q y) \in F^e_* S_{iq+j} \subset (F^e_*S)_{i+j/q}$. In general, if $M$ is $\frac{1}{q}\bZ$-graded $S$-module, we may write $M = \bigoplus_{n=0}^{q-1} \bigoplus_{i \in \bN} M_{i+n/q}$ where the $M^{(n)} \coloneqq \bigoplus_{i \in \bN} M_{i+n/q}$ are $\bN$-graded $S$-modules. In other words, $M$ admits a graded direct sum decomposition $M = \bigoplus_{n=0}^{q-1} M^{(n)}$, and so $M^{(n)}$ is a graded direct summand of $M$. In the case $M = F^e_* S$, we have $
(F^e_*S)^{(n)} = \bigoplus_{i \in \bN} F^e_* S_{iq+n}$. Let $X = \Proj S$. We readily see that $
\Gamma_*\big(F^e_* \sO_X(n)\big) = (F^e_*S)^{(n)}$. In particular, $F^e_*\sO_X(n) \cong \big((F^e_*S)^{(n)}\big)^{\sim}$. Hence, the following graded version of Kunz's theorem holds.

\begin{proposition}
With notation as above, $S$ is regular if and only if $(F^e_*S)^{(n)}$ is free as a graded $S$-module for all $n=0, \ldots, q-1$.
\end{proposition}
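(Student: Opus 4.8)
The plan is to deduce this directly from Kunz's theorem together with the graded decomposition $F^e_* S = \bigoplus_{n=0}^{q-1} (F^e_* S)^{(n)}$ recorded just above the statement. The bridge between the two is the following principle: over a connected $\bN$-graded noetherian ring (here $S$, with $S_0 = \kay$ and irrelevant ideal $\fram$), a finitely generated graded module is graded-free precisely when it is flat. Granting this, the argument is short, because a finite direct sum of graded modules is flat (respectively graded-free) if and only if each summand is. Hence the graded-freeness of \emph{all} the $(F^e_* S)^{(n)}$ is equivalent to the flatness of $F^e_* S$ as an $S$-module, which by Kunz's theorem is equivalent to the regularity of $S$.

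Let me spell out the two halves. First I would record that $S$ is $F$-finite: since $\kay$ is perfect and $S$ is a finitely generated $\kay$-algebra, $F^e_* S$ is a finitely generated $S$-module, so each graded summand $(F^e_* S)^{(n)}$ is again finitely generated (a graded $S$-submodule of a finite module over a noetherian ring) and bounded below. For the ``only if'' direction, Kunz's theorem gives that $S$ regular implies $F^e_* S$ is flat over $S$; being a graded direct summand, each $(F^e_* S)^{(n)}$ is then a finitely generated flat graded $S$-module, hence graded-free by the principle above. For the ``if'' direction, if every $(F^e_* S)^{(n)}$ is graded-free then $F^e_* S = \bigoplus_{n} (F^e_* S)^{(n)}$ is graded-free, in particular flat, and Kunz's theorem returns the regularity of $S$.

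The only real content is the flat-implies-graded-free principle, which is the graded avatar of the statement that a finitely generated flat module over a local ring is free; I would prove it by the usual graded Nakayama argument. Choosing homogeneous elements of $M$ lifting a $\kay$-basis of $M/\fram M$ yields a graded surjection $\phi \: P \to M$ from a finite graded-free module $P$ with $P \otimes_S \kay \xrightarrow{\cong} M \otimes_S \kay$. Writing $K = \ker \phi$, flatness of $M$ gives $\Tor_1^S(M,\kay) = 0$, so tensoring $0 \to K \to P \to M \to 0$ with $\kay = S/\fram$ shows $K/\fram K = 0$; since $K$ is finitely generated and bounded below, graded Nakayama forces $K = 0$ and hence $M \cong P$ is graded-free. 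I expect this lemma — really its graded Nakayama input — to be the main, though entirely standard, technical point; everything else is bookkeeping with the decomposition $F^e_* S = \bigoplus_{n} (F^e_* S)^{(n)}$ and a black-box appeal to Kunz's theorem.
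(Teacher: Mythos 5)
Your proof is correct and takes essentially the same route as the paper: both directions reduce to Kunz's theorem via the decomposition $F^e_*S=\bigoplus_{n}(F^e_*S)^{(n)}$ together with the standard fact that a finitely generated graded module over the connected graded ring $S$ is graded-free once it is flat (the paper phrases this with projectivity in place of flatness and merely cites the graded-Nakayama lemma that you spell out). No gaps.
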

\begin{proof}
If $(F^e_*S)^{(n)}$ is free as a graded $S$-module for all $n=0, \ldots, q-1$, then it is free as an ordinary $S$-module and so is $F^e_*S$. Hence, $S$ is regular by Kunz's theorem. 

Conversely, if $S$ is regular, then $F^e_*S$ is a projective $S$-module according to Kunz's Theorem. Therefore, $(F^e_*S)^{(n)}$ is a direct summand of a projective $S$-module. Hence, $(F^e_*S)^{(n)}$ is a projective $\bZ$-graded $S$-module. Nonetheless, projective graded $S$-modules are free graded $S$-modules.
\end{proof}

We recall the following well-known statements and prove them for the sake of completeness.

\begin{lemma}
With notation as above, $S$ is regular if and only if $S$ is isomorphic to the standard graded polynomial ring over $\kay$. Indeed, if $S_{\fram}$ is a regular local ring, then $S \cong  \kay[x_1, \ldots , x_d]$ as graded rings, where $d = \dim_{\kay} S_{\fram} / \fram S_{\fram} = \dim S_{\fram} $.
\end{lemma}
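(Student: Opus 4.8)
The plan is to prove the two directions separately, with essentially all the content in the forward implication. For the converse, if $S \cong \kay[x_1,\ldots,x_d]$ as graded rings, then $S_{\fram}$ is a localization of a polynomial ring over the field $\kay$, hence a regular local ring; nothing further is needed.

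For the forward direction, suppose $S_{\fram}$ is regular. My first step is to pin down the graded cotangent space. Since $S$ is generated over $S_0=\kay$ by $S_1$, one has $\fram = S_+ = (S_1)$, and a direct check shows $(\fram^2)_i = S_i$ for $i \geq 2$ while $(\fram^2)_0 = (\fram^2)_1 = 0$; hence $\fram/\fram^2 \cong S_1$, concentrated in degree $1$. Setting $d \coloneqq \dim_{\kay} S_1$, I next want to identify $d$ with both the embedding dimension and the Krull dimension of $S_{\fram}$. The embedding-dimension step requires checking that localizing at $\fram$ does not change $\fram/\fram^2$: every $u \in S \smallsetminus \fram$ has a nonzero degree-$0$ component $u_0 \in \kay^{\times}$, and since the higher-degree part of $u$ multiplies $S_1$ into $\fram^2$, $u$ acts on $\fram/\fram^2 \cong S_1$ as the unit $u_0$. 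Thus $\fram S_{\fram}/\fram^2 S_{\fram} \cong \fram/\fram^2$ has $\kay$-dimension $d$, so $\edim S_{\fram} = d$, and regularity gives $\dim S_{\fram} = d$. Finally, because $\fram = S_+$ is the unique graded maximal ideal and contains every homogeneous prime, the longest chain of graded primes terminates at $\fram$, whence $\dim S = \height \fram = \dim S_{\fram} = d$ (a standard feature of standard graded $\kay$-algebras).

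With the numerology in place, the conclusion is a short dimension argument. Choosing a $\kay$-basis $x_1,\ldots,x_d$ of $S_1$, these generate $S$ as a $\kay$-algebra, so there is a surjective homomorphism of graded rings $\phi\colon \kay[X_1,\ldots,X_d] \to S$, $X_i \mapsto x_i$, with $\deg X_i = 1$. If $\ker\phi$ were nonzero, a nonzero homogeneous $f \in \ker\phi$ would force $\dim S = \dim \kay[X_1,\ldots,X_d]/\ker\phi \leq \dim \kay[X_1,\ldots,X_d]/(f) = d-1$, contradicting $\dim S = d$. Hence $\ker\phi = 0$ and $\phi$ is the desired graded isomorphism.

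I expect the only genuinely delicate points to be the two ``localization is harmless'' facts — that passing from $S$ to $S_{\fram}$ alters neither $\fram/\fram^2$ nor the Krull dimension — both of which are standard for positively graded $\kay$-algebras and can simply be quoted if a cleaner write-up is preferred; everything else is formal. One small thing worth flagging is that the displayed identity in the statement should read $d = \dim_{\kay}\!\big(\fram S_{\fram}/\fram^2 S_{\fram}\big)$, i.e.\ the cotangent space, since $S_{\fram}/\fram S_{\fram} = \kay$ is one-dimensional.
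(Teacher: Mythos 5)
Your proof is correct, but it takes a genuinely different route from the paper's. The paper's proof is a two-line citation argument: by \cite[Theorem 13.8 (iii)]{MatsumuraCommutativeRingTheory}, a standard graded $\kay$-algebra $S$ is canonically isomorphic, as a graded ring, to the associated graded ring of the local ring $S_{\fram}$ with respect to $\fram S_{\fram}$; and the associated graded ring of a regular local ring (whose maximal ideal is generated by a regular sequence) is a standard graded polynomial ring by \cite[Theorem 1.1.8]{BrunsHerzog}. Your argument instead proceeds by direct dimension counting: identify $\fram/\fram^2 \cong S_1$, check that localization at $\fram$ changes neither the cotangent space nor the Krull dimension, conclude $\edim S_{\fram} = \dim_{\kay} S_1 = \dim S_{\fram} = \dim S$ from regularity, and then kill the kernel of the graded surjection $\kay[X_1,\ldots,X_d] \twoheadrightarrow S$ by Krull's principal ideal theorem. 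The paper's route is shorter and yields the stronger observation that $S$ is recovered from $S_{\fram}$ as an associated graded ring even without regularity, whereas yours is self-contained and elementary, relying only on the standard facts about positively graded $\kay$-algebras that you flag. Your correction of the displayed identity is also right: $S_{\fram}/\fram S_{\fram} = \kay$ is one-dimensional, so the intended quantity is the cotangent space $\dim_{\kay}\bigl(\fram S_{\fram}/\fram^2 S_{\fram}\bigr)$.
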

\begin{proof}
By \cite[Theorem 13.8 (iii)]{MatsumuraCommutativeRingTheory}, even without assuming that $S_{\fram}$ is a regular local ring, $S$ is isomorphic (as graded rings) to the graded associated ring of the local ring $S_{\fram}$ (with respect to its maximal ideal). On the other hand, if $S_{\fram}$ is such that $\fram S_{\fram}$ is generated by a regular sequence, then its associated graded ring is a standard graded polynomial ring; see \cite[Theorem 1.1.8]{BrunsHerzog}.  
\end{proof}
The following well-known characterization of projective spaces is then obtained.
\begin{corollary}
The projective spaces are the only ones that admit a regular ring of sections. More precisely, if $X$ is a variety that admits an ample invertible sheaf $\sA$ so that the corresponding ring of sections $R(X,\sA)$ is regular, then $X$ is isomorphic to $\bP^{\dim X}$.
\end{corollary}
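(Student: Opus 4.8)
The plan is to reduce the statement to the previous lemma by identifying $X$ with $\Proj R(X,\sA)$. First I would set $S \coloneqq R(X,\sA) = \bigoplus_{i \geq 0} H^0(X, \sA^i)$. Since $\sA$ is ample, a standard fact is that $X \cong \Proj S$ and $\sA \cong \sO_X(1)$, at least after replacing $\sA$ by a suitable power; however, to stay within the hypotheses I would argue directly. The key observation is that $S$ is an $\bN$-graded ring with $S_0 = \kay$ (as $X$ is a variety, hence $H^0(X,\sO_X) = \kay$), and by hypothesis $S$ is regular. The \emph{regularity} of $S$ forces it, by the preceding lemma, to be a standard graded polynomial ring $\kay[x_1,\ldots,x_n]$ for some $n$. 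In particular $S$ is generated in degree $1$ over $S_0 = \kay$, so the setup of the graded Kunz subsection applies and $\Proj S$ makes sense as the variety cut out by these generators.

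Next I would pin down $X$ itself. Once we know $S \cong \kay[x_1,\ldots,x_n]$ as a graded ring, we get $\Proj S \cong \Proj \kay[x_1,\ldots,x_n] = \bP^{n-1}$. The remaining point is to justify $X \cong \Proj S$ with the grading induced by $\sA$. The cleanest route is to invoke that for an ample invertible sheaf $\sA$ on a projective variety $X$, the natural morphism $X \to \Proj R(X,\sA)$ is an isomorphism once $\sA$ is very ample and $X$ is projectively normal; regularity of $S$ guarantees $S$ is a normal domain (polynomial rings are normal), so the section ring is integrally closed and the morphism $X \to \Proj S$ is an isomorphism onto its image, which is all of $\Proj S$. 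Therefore $X \cong \bP^{n-1}$, and matching dimensions gives $n - 1 = \dim X$, so $X \cong \bP^{\dim X}$ as claimed.

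The main obstacle I anticipate is the comparison between $X$ and $\Proj R(X,\sA)$ in the possible absence of very ampleness: ampleness alone only guarantees that $\sA^m$ is very ample for $m \gg 0$, and the section ring $R(X,\sA)$ may fail to be generated in degree $1$ \emph{a priori}. Here, though, regularity rescues us: if $S = R(X,\sA)$ is regular, the lemma shows $S$ is a standard graded polynomial ring, which is automatically generated in degree $1$; this retroactively forces $\sA$ itself to be very ended and $X$ projectively normal with respect to $\sA$. Thus the degree-$1$ generation we would otherwise have to assume is a \emph{consequence} of the regularity hypothesis rather than an extra assumption, and the potential obstruction dissolves. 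I would make sure to state this dependence carefully, since it is the crux of why the corollary follows so cleanly from the lemma.
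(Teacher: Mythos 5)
Your reduction to the preceding lemma contains a genuine circularity at exactly the point you identify as the crux. The lemma is stated ``with notation as above,'' and that notation \emph{assumes} $S$ is finitely generated by $S_1$ over $S_0=\kay$; its proof invokes Matsumura's Theorem 13.8(iii), which identifies $S$ with $\mathrm{gr}_{\fram}(S_{\fram})$ precisely because $S$ is generated in degree $1$. So you cannot invoke the lemma to \emph{conclude} that $S=R(X,\sA)$ is standard graded: you would be using degree-$1$ generation to prove degree-$1$ generation. Regularity alone does not force standard gradedness for a positively graded $\kay$-algebra --- $\kay[x]$ with $\deg x=2$ is regular but is not isomorphic as a graded ring to the standard polynomial ring, and $\kay[x_0,x_1,x_2]$ with weights $(1,1,2)$ is regular while its $\Proj$ is the singular quadric cone, not $\bP^2$. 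What regularity does give, without any gradedness of generators, is that $S$ is a \emph{weighted} polynomial ring $\kay[x_0,\ldots,x_n]$ with $\deg x_i=a_i\geq 1$ (lift a homogeneous basis of $\fram/\fram^2$); the content you still owe is that all $a_i=1$.

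That missing step can be supplied, but it requires an input you have not used, namely that $\sA$ is an ample \emph{invertible} sheaf on a projective variety: then $\dim_\kay S_d=h^0(X,\sA^d)=\chi(X,\sA^d)$ for $d\gg 0$ is a genuine polynomial in $d$ (Serre vanishing plus asymptotic Riemann--Roch), whereas the Hilbert series $\prod_{i}(1-t^{a_i})^{-1}$ of a weighted polynomial ring has eventually polynomial coefficients only when its unique pole is at $t=1$, i.e.\ when every $a_i=1$. With that in hand, $S$ is standard graded, the lemma applies, and your identification $X\cong\Proj S\cong\bP^n$ with $n=\dim X$ goes through (the isomorphism $X\cong\Proj R(X,\sA)$ for ample $\sA$ on a proper scheme is standard and does not need the projective-normality detour you sketch). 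Alternatively, the shortest repair is to read the corollary as inheriting the section's standing hypothesis that the section ring is generated in degree one, in which case the statement is immediate from the lemma; but as written your argument does not establish that hypothesis.
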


We say that a locally free sheaf on a scheme $X$ is \emph{$\sL$-split}, for a given invertible sheaf $\sL$, if it is isomorphic to a direct sum of invertible sheaves whose class in $\Pic X$ belongs to $\langle \sL \rangle_{\bZ} \subset \Pic X$. Applying the graded Kunz theorem, we obtain the following.

\begin{corollary}\label{cor.GradedKunzTheorem}
Let $X$ be a variety admitting an ample invertible sheaf $\sA$ such that $F^e_* \sA^{n}$ is \emph{$\sA$-split} for all $n=0,\ldots, q-1$ (for some $0\neq e \in \bN$). Then, $X \cong \bP^{\dim X}$.
\end{corollary}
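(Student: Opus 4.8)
The plan is to prove that the section ring $S \coloneqq R(X,\sA) = \bigoplus_{m \geq 0} H^0(X,\sA^m)$ is regular, and then quote the corollary established just above (that a variety with a regular ring of sections is isomorphic to $\bP^{\dim X}$). By the graded Kunz proposition proved above, regularity of $S$ is equivalent to each graded piece $(F^e_*S)^{(n)} = \bigoplus_{i \in \bN} F^e_* S_{iq+n}$ being free as a graded $S$-module, for $n = 0,\ldots,q-1$. So I would write $X = \Proj S$ and use the identification $(F^e_*S)^{(n)} = \Gamma_*\bigl(F^e_*\sO_X(n)\bigr) = \Gamma_*(F^e_*\sA^n)$ recorded just before the proposition. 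This reduces everything to showing that $\Gamma_*(F^e_*\sA^n)$ is free for each $n$, and it is precisely here that the $\sA$-split hypothesis enters.

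Writing the hypothesis as $F^e_*\sA^n \cong \bigoplus_j \sA^{c_{n,j}}$ with $c_{n,j} \in \bZ$, applying $\Gamma_*$ gives $(F^e_*S)^{(n)} \cong \bigoplus_j \Gamma_*(\sA^{c_{n,j}})$. The delicate point is that $\Gamma_*(\sA^c) = \bigoplus_{i\geq 0} S_{c+i}$ agrees with the shifted free module $S(c)$, whose rank-one generator sits in degree $-c$; this generator lies in the non-negative range, so the module is genuinely $\bN$-graded free, exactly when $c \leq 0$. For $c > 0$ one instead gets a proper truncation of $S$, which is not free in general (already $\Gamma_*(\sO_{\bP^1}(1))$ is the irrelevant ideal). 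Thus the whole argument hinges on the claim that every $c_{n,j} \leq 0$.

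To prove $c_{n,j}\leq 0$ I would twist down and invoke ampleness. For each integer $t \geq 1$, the projection formula (together with $F^{e,*}\sA=\sA^q$) yields
\[
H^0\bigl(X, F^e_*\sA^n \otimes \sA^{-t}\bigr) = H^0\bigl(X, \sA^{n-tq}\bigr),
\]
and since $0 \leq n \leq q-1$ we have $n - tq \leq -1 < 0$, so this group vanishes because a negative power of an ample invertible sheaf on a positive-dimensional projective variety has no nonzero global sections (the case $\dim X = 0$ is $X\cong\bP^0$ and is trivial). On the other hand, if some $c_{n,j} \geq 1$, then taking $t = \max_j c_{n,j} \geq 1$ would make the summand attaining the maximum contribute $H^0(\sO_X) = \kay \neq 0$ to $H^0\bigl(X,F^e_*\sA^n \otimes \sA^{-t}\bigr)$, a contradiction. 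Hence all $c_{n,j} \leq 0$, so each $\Gamma_*(\sA^{c_{n,j}}) \cong S(c_{n,j})$ is free and therefore $(F^e_*S)^{(n)}$ is free for every $n$. By the graded Kunz proposition $S$ is regular, and the preceding corollary then gives $X \cong \bP^{\dim X}$.

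I expect the main obstacle to be exactly this sign issue in the last step rather than any of the bookkeeping. A naive reading of ``$\sA$-split'' only produces a decomposition into shifted copies $\bigoplus_j S(c_{n,j})$ of the base ring, and over the $\bN$-graded ring $S$ such a shift is free only when the shift is non-positive; pinning down where ampleness forces $c_{n,j}\leq 0$—through the vanishing of $H^0(\sA^{n-tq})$ for $t \geq 1$—is the real content. The remaining ingredients (the identification $(F^e_*S)^{(n)} = \Gamma_*(F^e_*\sA^n)$, the graded Kunz proposition, and the characterization of $\bP^{\dim X}$ by a regular section ring) are all available from the material above, under the standing assumption of that subsection that $S$ is standard graded so that $\sO_X(n)=\sA^n$.
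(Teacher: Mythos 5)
Your proof is correct and follows the paper's route exactly: identify $(F^e_*S)^{(n)}$ with $\Gamma_*(F^e_*\sA^n)$, use the $\sA$-splitting to decompose it as a direct sum of shifts $S(c_{n,j})$, and then invoke the graded Kunz proposition together with the preceding corollary on regular section rings. The additional step forcing $c_{n,j}\le 0$ is valid and harmless, but note that under the paper's convention $\Gamma_*(\sF)=\bigoplus_{i\in\bZ}H^0(X,\sF\otimes\sA^i)$ one has $\Gamma_*(\sA^c)=S(c)$ (a free graded module) for \emph{every} $c\in\bZ$ --- in particular $\Gamma_*(\sO_{\bP^1}(1))$ is $S(1)$, generated in degree $-1$, not the irrelevant ideal --- which is why the paper's one-line proof omits this sign check.
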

\begin{proof}
Set $S = \Gamma_* \sA = R(X,\sA)$. Then $F^e_*S = \bigoplus_{n=0}^{q-1} \Gamma_*(F^e_* \sA^n)$. Our hypothesis then says that $\Gamma_*(F^e_* \sA^n)=(F_*^e S)^{(n)}$ is a free graded $S$-module as $\Gamma_*(\sA^{a}) = S(a)$.
\end{proof}

\section{Examples} \label{sec.Examples}

This section aims to describe some further examples of Frobenius pushforwards of invertible sheaves (and so of $\sE_e$) on some basic varieties. Our main motivation is to use them in the proofs of our main theorems in \autoref{sec.MainSection}. For example, \autoref{ex.HirzebruchSurfaces} gives an alternative proof of \autoref{cor.MainCorDim2}, and the computations of both \autoref{sec.BlowupProjSpace} and \autoref{sec.CONESEXAMPLES} are essentially used in the proof of \autoref{prop.RulingOutCases}.

We commence with those examples that can be easily obtained from the formulas in \autoref{pro.PushforwardProjectiveBundles} and \autoref{pro.PuschForwardVectorbundle} (including singular ones such as those in \autoref{sec.CONESEXAMPLES}). The authors are aware that some of our examples are toric varieties, whose Frobenius pushforwards have been greatly described in \cite{AchingerCharacterizationOfToricVarieties,ThomsenFrobeniusDirectImagesOfLineBundlesToricVarieties}. For instance, toric varieties are characterized as those varieties whose Frobenius pushforwards of invertible sheaves split as a direct sum of invertible sheaves.

More generally, there is a hearty body of works describing the Frobenius pushforward of the structure sheaf of certain homogeneous spaces in the context of $\sD$-affinity and representation theory. See, for instance, \cite{KanedaFrobeniusPushforwardsStructureSheafHomogeneousSpaces,RaedscheldersSpenkoVandenBerghFrobeniusMorphismInInvariantTheory,SamokhinFrobeniusMorohismFlagVarietiesI,SamokhinFrobeniusMorohismFlagVarietiesII} and the references therein. It would be very interesting to use these computations to analyze the positivity of $\sE_X$; this will be pursued elsewhere.

For another set of interesting examples, we recommend \cite{SannaiTanakaOrdinaryAbelianFrobenius, EjiriSannaACharacterizationOrdnaryAblianVarieties, HaraFrobeniusSummandsBlownUpSurfaceofP2}. The former two papers are concerned with (ordinary) abelian varieties (which shall not concern us since these are not Fano) whereas the latter is concerned with the degree-$5$ del Pezzo surface. It is worth noting that in those works the emphasis has been on the (in)decomposability of Frobenius pushforwards, whereas our focus is on positivity. In this section, we fix $0 \neq e \in \bN$.

\subsection{Products of projective spaces} \label{ex.ProducProjectiveSpaces}
 By direct application of \autoref{pro.PushforwardProjectiveBundles}: 
\[
\bigoplus_{i=0}^d \sO_{\bP^d}(k-i)^{\oplus a(i,m;d,e)} \to F^e_* \sO_{\bP^d} (n) 
\]
where $n=kq+m$, $0 \leq m \leq q-1$. In particular, if $d=1$:
\[
\sO_{\bP^1}(k)^{\oplus (m+1)} \oplus \sO_{\bP^1}(k-1)^{\oplus(q-1-m)} \xrightarrow{\cong}  F^e_* \sO_{\bP^1} (n). 
\]
For $d=2$, we have that $F^e_* \sO_{\bP^2} (n)$ is isomorphic to:
\[
\sO_{\bP^2}(k)^{\oplus \frac{(m+1)(m+2)}{2}} \oplus \sO_{\bP^2}(k-1)^{\oplus \frac{q^2+(2m+3)q-2(m+1)(m+2)}{2}} \oplus \sO_{\bP^2}(k-2)^{\oplus \frac{(q-(m+1))(q-(m+2))}{2} }
\]

Let $X = \bP^r \times \bP^s$, consider its canonical projections $\pi_r \: X \to \bP^r$ and $\pi_s \: X \to \bP^s$, and set $\sO(u,v) \coloneqq \pi_r^* \sO_{\bP^r}(u) \otimes \pi_s ^* \sO_{\bP^s}(v)$. Then, writing $u = kq +m$ and $v=lq+n$ with $0\leq m,n \leq q-1$, we have:
\begin{align*}
F^e_* \sO(u,v) &\cong \Biggl( \bigoplus_{i=0}^r{\pi_r^*\sO_{\bP^d}(k-i)^{\oplus a(i,m;r,e)}} \Biggl) \otimes \Biggl( \bigoplus_{j=0}^s{\pi_s^*\sO_{\bP^d}(l-j)^{\oplus a(i,n;s,e)}} \Biggl) \\
& = \bigoplus_{\substack{0\leq i \leq r \\ 0 \leq j \leq s }}{\sO(k-i,l-j)^{\oplus a(i,m;r,e)a(j,n;s,e)}}. 
\end{align*}
In particular, $\sum_{\substack{0\leq i \leq r \\ 0 \leq j \leq s }}{a(i,m;r,e)a(j,n;s,e)} = q^{r+s}$. Moreover,
\[
F^e_* \sO \cong \bigoplus_{\substack{0\leq i \leq r \\ 0 \leq j \leq s }}{\sO(-i,-j)^{\oplus a(i,0;r,e)a(j,0;s,e)}} \quad \text{and} \quad \sE_{e} \cong \bigoplus_{\substack{0\leq i \leq r \\ 0 \leq j \leq s \\0<i+j}}{\sO(i,j)^{\oplus a(i,0;r,e)a(j,0;s,e)}}
\]
Hence, $\sE_{e,X}$ is nef yet not ample as it contains $\sO(0,1)$ and $\sO(1,0)$ as direct summands.
\subsection{Hirzebruch surfaces} \label{ex.HirzebruchSurfaces}
Let $C \coloneqq \bP^1$, $\sF_{\varepsilon} \coloneqq \sO_C(-\varepsilon) \oplus \sO_C $ with $\varepsilon \in \bN$,\footnote{We use $\varepsilon$ instead of $e$ to avoid any confusion with the exponent of Frobenius.} and $\pi \: X_{\varepsilon} \to C$ be the corresponding projective bundle. In what follows, we use \cite[V, Notation 2.8.1]{Hartshorne}. That is, $C_0 $ will denote the section of $\pi$ given by $\sF_{\varepsilon} \to \sO_C(-\varepsilon) \to 0$ (thus $\sO_{X_{\varepsilon}}(1)\cong \sO_{X_{\varepsilon}}(C_0)$), and $f$ is the fiber of $\pi_{\varepsilon}$ along a chosen point representing the divisor class of $\sO_C(1)$. Recall that $X_{\varepsilon}$ can be thought of as the blowup at the vertex singularity of the projective cone defined by the Veronese embedding of $\bP^1$. Indeed, the complete linear system $|C_0 + \varepsilon f|$ defines the blowup morphism. Under such description, $C_0$ is none other than the exceptional divisor and, letting $C_1$ denote the section of $\pi$ corresponding to the quotient $\sF_{\varepsilon} \to \sO_C \to 0$, we have the linear equivalence $C_1 \sim C_0 + \varepsilon f$ (which is the pullback of a ruling of the projective cone). See \cite[V, Theorem 2.17]{Hartshorne}. Let $u,v \in \bZ$, and write $u = kq +m$ with $0\leq m \leq q-1$. Applying \autoref{pro.PushforwardProjectiveBundles} yields:
\begin{align*}
&F^e_* \sO_{X_{\varepsilon}}(uC_0+vf)\\
\cong{} &\Biggl(\sO_{X_{\varepsilon}}(kC_0) \otimes \bigoplus_{j=0}^m \pi^*F^e_* \sO_C(v-j\varepsilon )\Biggr)\oplus \Biggl(\sO_{X_{\varepsilon}}((k-1)C_0) \otimes \bigoplus_{j={m+1}}^{q-1} \pi^*F^e_* \sO_C(v-j\varepsilon)\Biggr)\\
\cong{} &\sF_1 \oplus \sF_2 \oplus \sF_3 \oplus \sF_4,
\end{align*}
where:
\begin{align*}
\sF_1 &= \bigoplus_{j=0}^{m} \sO_{X_{\varepsilon}}\big(kC_0+\lfloor (v-j\varepsilon)/q \rfloor f \big)^{\oplus \big([v-j\varepsilon]_q+1\big) }\\
\sF_2 &= \bigoplus_{j=0}^{m} \sO_{X_{\varepsilon}}\big(kC_0+(\lfloor (v-j\varepsilon)/q \rfloor - 1) f \big)^{\oplus\big(q-1-[v-j\varepsilon]_q\big) }\\
\sF_3 &= \bigoplus_{j=m+1}^{q-1} \sO_{X_{\varepsilon}}\big((k-1)C_0+\lfloor (v-j\varepsilon)/q \rfloor f \big)^{\oplus\big([v-j\varepsilon]_q+1\big) }\\
\sF_4 &= \bigoplus_{j=m+1}^{q-1} \sO_{X_{\varepsilon}}\big((k-1)C_0+(\lfloor (v-j\varepsilon)/q \rfloor - 1) f \big)^{\oplus\big(q-1-[v-j\varepsilon]_q\big) }
\end{align*}
In particular, setting $u,v = 0$:
\begin{align*}
F^e_* \sO_{X_{\varepsilon}} \cong{} &\sO_{X_{\varepsilon}} \oplus \sO_{X_{\varepsilon}}(-f)^{\oplus (q-1)} \oplus \bigoplus_{j=1}^{q-1} \sO_{X_{\varepsilon}}\big(-C_0+\lfloor -j\varepsilon/q \rfloor f\big)^{\oplus\big([-j\varepsilon]_q+1\big)}\\
&\oplus \bigoplus_{j=1}^{q-1} \sO_{X_{\varepsilon}}\big(-C_0+(\lfloor -j\varepsilon/q \rfloor - 1 ) f\big)^{\oplus\big(q-1-[-j\varepsilon]_q\big)}
\end{align*}

\subsubsection{Case $\varepsilon =1$} Specializing to $\varepsilon = 1$ gives the blowup of $\bP^2$ at a point. In this case,
\begin{align*}
&F^e_* \sO_{X_{1}}\\ \cong {} &\sO_{X_{1}} \oplus \sO_{X_{1}}(-f)^{\oplus (q-1)} \oplus \bigoplus_{j=1}^{q-1} \sO_{X_{1}}(-C_0- f)^{\oplus(q-j+1)}\oplus \bigoplus_{j=1}^{q-1} \sO_{X_{1}}(-C_0- 2f)^{\oplus (j-1)}\\
\cong{} &\sO_{X_{1}} \oplus \sO_{X_{1}}(-f)^{\oplus (q-1)} \oplus \sO_{X_{1}}(-C_0- f)^{\oplus \frac{(q+2)(q-1)}{2}}\oplus \sO_{X_{1}}(-C_0- 2f)^{\oplus \frac{(q-2)(q-1)}{2}}
\end{align*}
Equivalently, using $C_1 \sim C_0 + f$, we may write:
\[
F^e_* \sO_{X_{1}}\cong \sO_{X_{1}} \oplus \sO_{X_{1}}(C_0-C_1)^{\oplus (q-1)} \oplus \sO_{X_{1}}( -C_1)^{\oplus \frac{(q+2)(q-1)}{2}}\oplus \sO_{X_{1}}(C_0- 2C_1)^{\oplus \frac{(q-2)(q-1)}{2}}
\]
Pulling this back to $X_1 \smallsetminus C_0$ recovers $F^e_* \sO_{\bP^2}$ and pulling it back along $\bP^1 \cong C_0 \to X_1$ yields:
\[
\sO_{\bP^1}^{\oplus \frac{q(q+1)}{2}} \oplus \sO_{\bP^1}(-1)^{\oplus \frac{q(q-1)}{2}},
\]
which implies that $\sE_{e,X_1}$ is not ample.

Let $v=lq+n$ with $0\leq n \leq q-1$. We may easily compute $F^e_* \sO_{X_1}(uC_0 + vf)$. However, this will depend on whether $m \leq n$ or $m>n$. If $m\leq n$, $F^e_* \sO_{X_1}(uC_0 + vf)$ is isomorphic to
\begin{align*}
&\sO_{X_1}(kC_0 + lf)^{\oplus \frac{(m+1)(m+2+2(n-m))}{2} } \oplus  \sO_{X_1}(kC_0 + (l-1)f)^{\oplus \frac{(m+1)(2q-(m+2)-2(n-m))}{2}}  \\
&\oplus \sO_{X_1}((k-1)C_0+lf)^{\oplus \frac{(n-m)(n-m+1)}{2}} \\
&\oplus  \sO_{X_1}((k-1)C_0 + (l-1)f)^{\oplus \frac{(q-n-1)(q+n+2)-(n-m)(2q-n+m-1)}{2} } \\
&\oplus  \sO_{X_1}((k-1)C_0 + (l-2)f)^{\oplus \frac{(q-n-1)(q-n-2)}{2}}.
\end{align*}
If $n>m$, one has a similar description, but this time the invertible sheaves showing up as direct summands are $\sO_{X_1}(kC_0+lf)$, $\sO_{X_1}(kC_0+(l-1)f)$, $\sO_{X_1}(kC_0+(l-2)f)$, $\sO_{X_1}((k-1)C_0+(l-1)f)$, and $\sO_{X_1}((k-1)C_0+(l-2)f)$.

\subsubsection{Case $\varepsilon = 2$} This corresponds to the blowup of the singular quadric cone at its vertex. \emph{Let us assume first $p \neq 2$.} If $1 \leq j \leq (q-1)/2$ then $q-1 \geq q-2j \geq 1$ and so $\lfloor -2j/q \rfloor = -1$, $[-2j]_q=q-2j$. On the other hand, if $(q+1)/2 \leq j \leq q-1$ then $q-1 \geq 2q-2j \geq 2$ and so $\lfloor -2j/q \rfloor = -2$, $[-2j]_q=2q-2j$. Therefore, 
\begin{align*}
&\bigoplus_{j=1}^{q-1} \sO_{X_{2}}\big(-C_0+\lfloor -2j/q \rfloor f\big)^{\oplus\big([-2j]_q+1\big)}\\
={} &\bigoplus_{j=1}^{(q-1)/2} \sO_{X_2}(-C_0-f)^{\oplus(q-2j+1)} \oplus \bigoplus_{j=(q+1)/2}^{q-1} \sO_{X_2}(-C_0-2f)^{\oplus(2q-2j+1)}\\
={} & \sO_{X_2}(-C_0-f)^{\oplus\left(\frac{q-1}{2}\right)\left( \frac{q+1}{2} \right)} \oplus \sO_{X_2}(-C_0-2f)^{{\oplus}\left(\frac{q-1}{2}\right)\left( \frac{q+3}{2} \right)}.
\end{align*}
Likewise,
\begin{align*}
 &\bigoplus_{j=1}^{q-1} \sO_{X_{\varepsilon}}\big(-C_0+(\lfloor -j\varepsilon/q \rfloor - 1 ) f\big)^{\oplus\big(q-1-[-j\varepsilon]_q\big)} \\
 ={} &\sO_{X_2}(-C_0-2f)^{\oplus \left( \frac{q-1}{2} \right)^2} \oplus \sO_{X_2}(-C_0-3f)^{ \oplus \left( \frac{q-1}{2} \right) \left(\frac{q-3}{2} \right)} .
\end{align*}
Hence,
\begin{align*}
F^e_* \sO_{X_2} \cong{} &\sO_{X_2} \oplus \sO_{X_2}(-f)^{\oplus (q-1)}  \oplus \sO_{X_2}(C_0-f)^{\oplus \left(\frac{q-1}{2}\right)\left( \frac{q+1}{2} \right)} \\
&\oplus \sO_{X_2}(C_0-2f)^{\oplus \left(\frac{q-1}{2}\right)\left( \frac{2q+2}{2} \right)} \oplus \sO_{X_2}(C_0-3f)^{\oplus \left( \frac{q-1}{2} \right) \left(\frac{q-3}{2} \right)}  . 
\end{align*}

\emph{Let us assume now $p=2$}. There are two cases depending on whether $j\in \{1,\ldots,2^{e-1}\}$ or $j\in \{2^{e-1}+1,\ldots, 2^e-1\}$. In the former case $\lfloor -2j/2^e\rfloor = -1$ and $[-2j]_{2^e}=2^e-2j$, while in the latter case $\lfloor -2j/2^e\rfloor = -2$ and $[-2j]_{2^e}=2^{e+1}-2j$. Thus:

\begin{align*}
&\bigoplus_{j=1}^{q-1} \sO_{X_{2}}\big(-C_0+\lfloor -2j/q \rfloor f\big)^{\oplus\big([-2j]_q+1\big)}\\
={} & \sO_{X_2}(-C_0-f)^{\oplus 2^{2(e-1)}} \oplus \sO_{X_2}(-C_0-2f)^{\oplus(2^{2(e-1)}-1)},
\end{align*}
and 
\begin{align*}
 &\bigoplus_{j=1}^{q-1} \sO_{X_{\varepsilon}}\big(-C_0+(\lfloor -j\varepsilon/q \rfloor - 1 ) f\big)^{\oplus\big(q-1-[-j\varepsilon]_q\big)} \\
 ={} &\sO_{X_2}(-C_0-2f)^{\oplus 2^{2(e-1)}} \oplus \sO_{X_2}(-C_0-3f)^{ \oplus (2^{e-1}-1)^2 } .
\end{align*}
Therefore,
\begin{align*}
F^e_* \sO_{X_2} \cong{} &\sO_{X_2} \oplus \sO_{X_2}(-f)^{\oplus (q-1)}  \oplus \sO_{X_2}(C_0-f)^{\oplus(q/2)^2} \\
&\oplus \sO_{X_2}(C_0-2f)^{\oplus (q^2-2)/2} \oplus \sO_{X_2}(C_0-3f)^{\oplus \left(\frac{q-2}{2} \right)^2}  
\end{align*}
where $q=2^e$.

\subsubsection{Case $\varepsilon =3$} There are three cases depending on whether $q \equiv 0,1,2 \mod 3$. \emph{Suppose first $q \equiv 1 \bmod 3$.} Then, we may write a partition
\[
\{1,\ldots, q-1\} = \{1,\ldots, (q-1)/3\} \cup \{(q+2)/3,\ldots, 2(q-1)/3\} \cup \{(2q+1)/3, \ldots, q-1\},
\]
and we denote these subsets by $J_1$, $J_2$, and $J_3$; respectively. Thus, if $j \in J_i$ then $\lfloor - 3j/q \rfloor = -i$ and $[3j]_{q}=iq-3j$. In particular, just as before, we get:
\begin{align*}
F^e_* \sO_{X_3} \cong{} &\sO_{X_3} \oplus \sO_{X_3}(-f)^{\oplus(q-1)} \oplus \\
& \sO_{X_3}(-C_0-f)^{\oplus \sigma_1} \oplus \sO_{X_3}(-C_0-2f)^{\oplus \sigma_2} \oplus \sO_{X_3}(-C_0-3f)^{\oplus \sigma_3} \oplus \sO_{X_3}(-C_0-4f)^{\oplus \sigma_4},   
\end{align*}
where the exponents $\sigma_i$ are obtained as follows:
\[
    \sigma_1 = \sigma_1',\quad
    \sigma_2 = \sigma_2'+\sigma_1'',\quad
    \sigma_3 = \sigma_3' + \sigma_2'',\quad
    \sigma_4 = \sigma_3'',
\]
where
\begin{align*}
    \sigma_i' &\coloneqq \sum_{j \in J_i} (iq-3j+1) =iq|J_i| - \sum_{j\in J_i}(3j-1) ,\\
    \sigma_i'' &\coloneqq \sum_{j \in J_i} \big((1-i)q + 3j-1\big) = -(i-1) q  |J_i| + \sum_{j\in J_i}(3j-1),
\end{align*}
where  $|J_i|=(q-1)/3$. Then a direct computation shows:
\[
 \sigma_1 = \frac{q(q-1)}{6}, \quad \sigma_2 = \frac{(q+1)(q-1)}{3},\quad \sigma_3 = \frac{(q+1)(q-1)}{3},\quad
\sigma_4 = \frac{(q-4)(q-1)}{6}.
\]

\emph{Let us suppose now $q \equiv 2 \bmod 3$ but $q \neq 2$.}\footnote{The case $q=2$ is trivial for all $\varepsilon$. In the case $\varepsilon = 3$, we have $\sigma_1, \sigma_2 = 1$ and $\sigma_3,\sigma_4= 0$.} Then the same description as above holds but this time using the (asymmetric) partition:
\[
\{1,\ldots, q-1\} = \left\{1,\ldots, \frac{q-2}{3}\right\} \cup \left\{\frac{q+1}{3},\ldots, \frac{2(q-2)}{3}, \frac{2q-1}{3}\right\} \cup \left\{\frac{2q+2}{3},  \ldots, q-2, q-1\right\}
\]
where $|J_1| = (q-2)/3 = |J_3|$ whereas $|J_2|=(q+1)/3$.
In that case, we get:
\[
\sigma_1 = \frac{(q+1)(q-2)}{6},\quad
\sigma_2 = \frac{q^2+2}{3},\quad
\sigma_3 = \frac{(q+2)(q-2)}{3},\quad
\sigma_4 = \frac{(q-3)(q-2)}{6}.
\]
\emph{The final case is $q \equiv 0 \bmod 3$.} If $q=3$, one readily verifies $\sigma_1=1$, $\sigma_2=3$, $\sigma_3=2$, and $\sigma_4=0$. If $q \geq 9$, then one uses the partition
\[
\{1,\ldots, q-1\} = \{1, \ldots, q/3\} \cup \{q/3+1,  \ldots , 2q/3\} \cup \{2q/3+1, \ldots, q-1\}
\]
to obtain, via similar computations, the following exponents:
\[
\sigma_1 = \frac{q(q-1)}{6},\quad
\sigma_2 = \frac{q^2}{3},\quad
\sigma_3 = \frac{q^2-3}{3},\quad
\sigma_4 = \frac{(q-3)(q-2)}{6}.
\]

\subsubsection{General case} For general $\varepsilon$, \emph{let us suppose $q \geq \varepsilon$}. Let us write the following partition
\[
\{1,\ldots,q-1\} = J_1 \cup \cdots \cup J_{\varepsilon-1} \cup J_{\varepsilon},
\]
where
\[
J_i \coloneqq \left\{\left\lfloor (i-1)q/\varepsilon \right\rfloor + 1, \ldots , \left\lfloor iq/\varepsilon \right\rfloor\right\}
\]
if $i=1,\ldots, \varepsilon - 1$, and 
\[
J_{\varepsilon} \coloneqq \left\{\left\lfloor (\varepsilon-1)q/\varepsilon \right\rfloor + 1, \ldots , q-1\right\}.
\]
Thus, if $j \in J_i$ then $\lfloor -\varepsilon j/q\rfloor = -i$ and $[-\varepsilon j]_q=iq-\varepsilon j$. Hence, we may define
\[
    \sigma_i' \coloneqq iq |J_i| -\sum_{j \in J_i}(\varepsilon j-1),\qquad
    \sigma_i''\coloneqq -(i-1)q|J_i| + \sum_{j \in J_i}(\varepsilon j-1),
\]
for $i=1,\ldots, \varepsilon$, and further:
\[
\sigma_1 \coloneqq \sigma_1', \quad \sigma_i \coloneqq \sigma_i' + \sigma_{i-1}'',\quad \sigma_{\varepsilon+1} \coloneqq \sigma_{\varepsilon}'',
\]
for $i=2,\ldots, \varepsilon$. Then,
\[
F^e_* \sO_{X_{\varepsilon}} \cong \sO_{X_{\varepsilon}} \oplus \sO_{X_{\varepsilon}}(-f)^{\oplus(q-1)} \oplus \bigoplus_{i=1}^{\varepsilon +1} \sO_{X_{\varepsilon}}(-C_0-if)^{\oplus \sigma_i}. 
\]
Computing $\sigma_i$ is rather subtle. To do so, we need to consider the arithmetic modulo $\varepsilon$. Precisely, for $k, l \in \{0, \ldots, \varepsilon -1\}$, let $\rho_{k,l} \in \{0, \ldots , \varepsilon - 1\}$ be the residue of $kl$ modulo $\varepsilon$. For convenience, we also define $\rho_{k,\varepsilon} \coloneqq \varepsilon$. The point is that, for $i=0,\ldots, \varepsilon -1$, we have $\lfloor iq/\varepsilon \rfloor = (iq-\rho_{k,i})/\varepsilon$ if $k$ is the residue of $q$ modulo $\varepsilon$. Further, one has
\[
|J_i| = \frac{q-\rho_{k,i}+\rho_{k,i-1}}{\varepsilon}, \quad \sum_{j \in J_1 \cup \cdots \cup J_i}{(\varepsilon j - 1)} = \frac{(iq-\rho_{k,i})(iq-\rho_{k,i}+\varepsilon-2)}{2 \varepsilon} 
\]
for $i=1,\ldots, \varepsilon$. Thus, a lengthy, direct computation shows:
\begin{align*}
    \sigma_1 &= \frac{(q-\rho_{k,1})(q+\rho_{k,1}-\varepsilon+2)}{2\varepsilon},\\
    \sigma_i &= \frac{q^2-\big(\rho_{k,i}^2-2\rho_{k,i-1}^2+\rho_{k,i-2}^2-(\varepsilon-2)(\rho_{k,i}-2\rho_{k,i-1}+\rho_{k,i-2})\big)/2}{\varepsilon}, \quad i=2,\ldots,\varepsilon,\\
    \sigma_{\varepsilon+1} & = \frac{(q-\varepsilon+\rho_{k, \varepsilon-1})(q-\rho_{k,\varepsilon-1}-2)}{2 \varepsilon}
\end{align*}
where $k$ is the residue of $q$ modulo $\varepsilon$. Of course, there are two very simple cases. Namely, $k=0$ (i.e. $\varepsilon=p$) and $k=1$ (e.g. $p \equiv 1 \bmod \varepsilon$) for $\rho_{0,i}=0$ and $\rho_{1,i} = i$ for $i=0,\ldots , \varepsilon -1$.

Note that the pullback of $\sO_{X_{\varepsilon}}(f)$ and $\sO_{X_{\varepsilon}}(C_0)$ to $C_0$  correspond; respectively, to $\sO_{\bP^1}(1)$ and $\sO_{\bP^1}(-\varepsilon)$ under the isomorphism $C_0 \cong \bP^1$.\footnote{Indeed, the latter is formal as $\sO_{\bP(\sF)}(1)=\sO_{X_{\varepsilon}}(C_0)$ and $C_0$ is the section corresponding to $\sF \to \sO_{\bP^1}(-\varepsilon) \to 0$. The former then follows from noticing that $\sO_{X_{\varepsilon}}(C_1) \cong \sO_{X_{\varepsilon}}(C_0 + \varepsilon f)$ pulls back to $\sO_{\bP^1}$, which means that $\sO_{X_{\varepsilon}}(\varepsilon f)$ pulls back to $\sO_{\bP^1}(\varepsilon )$ and then we just divide by $\varepsilon$.} Further,
\[
(F^e_* \sO_{X_{\varepsilon}})\big|_{C_0} \cong \sO_{\bP^1}^{\oplus (1+ \sigma_{\varepsilon})} \oplus \sO_{\bP^1}(-1)^{\oplus (q-1 + \sigma_{\varepsilon+1})} \oplus \bigoplus_{i=1}^{\varepsilon -1} \sO_{\bP^1}(i)^{\oplus \sigma_{\varepsilon -1}}. 
\]
Therefore, $\sE_{e,X_{\varepsilon}}$ is not ample.
\begin{remark} \label{rem.ApplicationLocalAlgebraConesRationalNormalCurve}
Let us point out an interesting application to local algebra. Recall that $X_{\varepsilon}$ is the blowup at the vertex singularity of the projective cone over the rational normal curve in $\bP^{\varepsilon}$; say $0\in P$, with $C_0$ being the exceptional divisor. In particular, restricting $f$ to $X_{\varepsilon} \smallsetminus C_0 = P \smallsetminus \{0\}\subset P$ and pushing it forward to $P$ gives us \emph{a ruling of $P$}; say $L$. Thus, $L$ is a Weil divisor on $P$ of Cartier index $\varepsilon$. Our computations above (with $q \geq \varepsilon$) then show:
\[
F^e_* \sO_P \cong \sO_P^{\oplus (1+\sigma_{\varepsilon})} \oplus \sO_P(-L)^{\oplus (q-1 + \sigma_1 + \sigma_{\varepsilon +1})} \oplus \bigoplus_{i=2}^{\varepsilon -1} \sO_P(-iL)^{\oplus \sigma_i}
\]
where it is worth noting that
\[
1+\sigma_{\varepsilon} = 
\begin{cases} 
q^2/\varepsilon, & \text{if } k=0,\\
\frac{q^2 - \frac{1}{2}\big( \rho_{k,2}^2-2 \rho_{k,1}^2 + (\varepsilon +2 )(2 \rho_{k,1} - \rho_{k,2})\big) + \varepsilon}{\varepsilon},  & \text{otherwise},
\end{cases}
\]
as $\rho_{k,\varepsilon -i} = \varepsilon - \rho_{k,i}$ if $k\neq 0$ and $i=1, \ldots, \varepsilon -1$. Similarly, 
\[
q-1+\sigma_1+\sigma_{\varepsilon+1} = 
\begin{cases} 
q^2/\varepsilon, & \text{if } k=0,\\
\frac{q^2 + \rho_{k,1}(\varepsilon - \rho_{k,1}) - \varepsilon}{\varepsilon},  & \text{otherwise}.
\end{cases}
\]

In particular, localizing at $0$ yields that the $F$-splitting numbers of $\sO_{P,0}$ are $1+\sigma_{\varepsilon}$ (for $q \geq \varepsilon$) as well as a complete description of the $\sO_{P,0}$-module $F^e_* \sO_{P,0}$. For instance, we recover that the $F$-signature of $\sO_{P,0}$ is $1/\varepsilon$. The authors were unaware of such a complete description. We hope that the reader will appreciate the novelty in our rather simple geometric approach.
\end{remark}
\subsection{Blowups of projective spaces along linear subspaces}
\label{sec.BlowupProjSpace}

Let $X \coloneqq \bP^d$, $Y \subset X$ be a linear subspace of $X$ of dimension $r-1$, and $\Tilde{X} \to X$ be the blowup of $X$ along $Y$. Let us assume $d-(r-1) \geq 2$. Recall that $\Tilde{X} $ can be realized as a projective bundle over $\bP^{d-r}$. Indeed, $\tilde{X} \cong \bP(\sF)$ where $\mathcal{\sF} = \sO_{\bP^{d-r}}(1)  \oplus \sO_{\bP^{d-r}}^{\oplus r}$; see \cite[\S 9.3.2]{EisenbudHarris3264AndAllThat}, \cf \cite[V, Example 2.11.4]{Hartshorne}. Moreover, under such an isomorphism, the blowup morphism $\tilde{X} \to X = \bP^d$ is realized by the complete linear system $|\sO_{\bP(\sF)}(1)|$. Let $H, H'$ be the divisors on $\tilde{X}$ defined by the pullback of the hyperplane sections of $X=\bP^d$ and $\bP^{d-r}$; respectively. Then, $\Cl \tilde{X} = \bZ \cdot H  \oplus \bZ \cdot H'$ and $\sO_{\bP(\sF)}(1)$ corresponds to $1\cdot H$. Applying \autoref{pro.PushforwardProjectiveBundles} yields:
\[
F^e_* \sO_{\tilde{X}} \cong \bigoplus_{i=0}^r \sO_{\tilde{X}}(-iH) \otimes \bigoplus_{j=0}^{q-1} \pi^* F^e_* \sO_{\bP^{d-r}}(j)^{\oplus a(i,-j;r-1,e)} \eqqcolon \bigoplus_{i=0}^r \sG_i,
\]
where $\pi \: \tilde{ X} \to \bP^{d-r}$ is the $\bP^r$-bundle morphism and $\sG_i$ are defined in the obvious way. We note that
\[
\sG_0 = \pi^* F^e_* \sO_{\bP^{d-r}} \cong \bigoplus_{k=0}^{d-r}\sO_{\tilde{X}}(-kH')^{\oplus a(k,0;d-r,e)}
\]
and set $b_{0,k} \coloneqq a(k,0;d-r,e)$. For $1 \leq i \leq r-1$, we have:
\begin{align*}
    \sG_i &= \sO_{\tilde{X}}(-iH) \otimes \Bigg(
    \pi^* F^e_* \sO_{\bP^{d-r}}^{\oplus a(i,0;r-1,e)} \oplus \bigoplus_{j=1}^{q-1} \pi^*F^e_* \sO_{\bP^{d-r}}(j)^{\oplus a(i-1, q-j; r-1,e)}
    \Bigg) \\
    &\cong\sO_{\tilde{X}}(-iH) \otimes \Bigg( \bigoplus_{k=0}^{d-r}\sO_{\tilde{X}}(-kH')^{\oplus a(k,0;d-r,e) \cdot a(i,0;r-1,e)} \\
     & \phantom{=\sO_{\tilde{X}}(-iH) \otimes \Bigg(} \oplus \bigoplus_{j=1}^{q-1} \bigoplus_{k=0}^{d-r} \sO_{\tilde{X}}(-kH')^{\oplus a(k,j;d-r,e) \cdot a(i-1, q-j; r-1,e)} \Bigg)\\
     &= \bigoplus_{k=0}^{d-r} \sO_{\tilde{X}}(-iH-kH')^{\bigoplus b_{i,k}}
\end{align*}
where
\[
b_{i,k} \coloneqq a(k,0;d-r,e) \cdot a(i,0;r-1,e) + \sum_{j=1}^{q-1} a(k,j;d-r,e) \cdot a(i-1,q-j;r-1,e).
\]
Likewise,
\begin{align*}
\sG_r &= \sO_{\tilde{X}}(-rH) \otimes \bigoplus_{j=1}^{q-1}\pi^* F^e_* \sO_{\bP^{d-r}}(j)^{\oplus a(r-1,q-j;r-1,e)}\\
&={} \sO_{\tilde{X}}(-rH) \otimes \bigoplus_{j=1}^{q-1} \bigoplus_{k=0}^{d-r} \sO_{\tilde{X}}(-kH')^{\oplus a(k,j;d-r,e)\cdot a(r-1,q-j;r-1,e)}\\
&=\bigoplus_{k=0}^{d-r} \sO_{\tilde{X}}(-rH-kH')^{\oplus \sum_{j=1}^{q-1}a(k,j;d-r,e) \cdot a(r-1,q-j;r-1,e)},
\end{align*}
so we set $b_{r,k} \coloneqq \sum_{j=1}^{q-1}a(k,j;d-r,e) \cdot a(r-1,q-j;r-1,e)$. Summing up:
\[
F^e_* \sO_{\tilde{X}} \cong \bigoplus_{\substack{0 \leq i \leq r \\ 0 \leq k \leq d-r}} \sO_{\tilde{X}}(-iH-kH')^{\oplus b_{i,k}} 
\]
where
\[
b_{i,k} = a(k,0;d-r,e) \cdot a(i,0;r-1,e) + \sum_{j=1}^{q-1} a(k,j;d-r,e) \cdot a(i-1,q-j;r-1,e),
\]
for all $0 \leq i \leq r$, $0 \leq k \leq d-r$.

Let $E$ be the exceptional divisor of $\tilde{X} \to X$ and note that $\Cl \tilde{X} = \bZ \cdot H \oplus \bZ \cdot E$. Set $\sO(a,b) \coloneqq \sO_{\tilde{X}}(aH+bE)$. Observe that $H \sim H'+E$; see \cite[Corollary 9.12]{EisenbudHarris3264AndAllThat}. Hence,
\begin{align*}
F^e_* \sO_{\tilde{X}} \cong \bigoplus_{\substack{0 \leq i \leq r \\ 0 \leq k \leq d-r}} \sO_{\tilde{X}}\big(-iH-k(H-E)\big)^{\oplus b_{i,k}} &= \bigoplus_{\substack{0 \leq i \leq r \\ 0 \leq k \leq d-r}}
\sO_{\tilde{X}}\big(-(i+k)H+kE)\big)^{\oplus b_{i,k}}   \\
&= \bigoplus_{\substack{0 \leq i \leq r \\ 0 \leq k \leq d-r}}
\sO(-i-k, k)^{\oplus b_{i,k}} 
\end{align*}

It is noteworthy that setting $d=2, r=1$ recovers our computation for $X_1$ in \autoref{ex.HirzebruchSurfaces} as $\sO_{X_1}(C_0)=\O(0,1)$ and $\sO_{X_1}(C_1)=\O(1,0)$. 

Pulling $F^e_* \sO_{\tilde{X}}$ back to the big open $\tilde{X} \smallsetminus E = X \smallsetminus Y \subset X$ and then pushing it forward to $X=\bP^d$ yields $\sum_{i+k=l}b_{i,k} = a(l,0;d,e)$ for all $l=0,\ldots, d$ (independently of $r$). For $r=1$: 
\[
a(l,0;d,e) = b_{0,l} + b_{1,l-1} = a(l,0;d-1,e) + \sum_{j=1}^{q-1}a(l-1,j;d-1,e)
\]
for all $l=0,\ldots, d$. In other words,
\[
\sum_{j=1}^{q-1} a(l-1,j;d-1,e) = a(l,0;d,e)-a(l,0;d-1,e).
\]
Adding $a(l-1,0;d-1,e)$ on both sides yields:
\begin{equation} \label{eqn.TRickySUm}
    \sum_{j=0}^{q-1}a(l-1,j;d-1,e) = a(l,0;d,e)-a(l,0;d-1,e) + a(l-1,0;d-1,e),
\end{equation}
for all $d,e$ and $l=1, \ldots, d$.

Next, recall that the exceptional divisor $E \to Y$ is realized as the projective bundle $\bP(\sI/\sI^2) \to Y \cong \bP^{r-1}$ where $\sI$ is the ideal sheaf cutting out $Y$ and the conormal bundle of $E \subset \tilde{X}$ corresponds to $\sO_E(-1) \coloneqq \sO_{\bP(\sI/\sI^2)}(-1)$. Therefore, the pullback of $F^e_*\sO_{\tilde{X}}$ to $E$ is
\[
\big(F^e_* \sO_{\tilde{X}}\big)\Big|_E \cong \bigoplus_{0 \leq k \leq d-r} \sO_{E}(-k)^{\oplus q^{r-1}(a(k+1,0;d-(r-1),e)-a(k+1,0;d-r,e)+a(k,0;d-r,e))} 
\]
as the pullback of $\sO(-i-k,k)$ to $E$ is $\sO_{E}(-k)$ and 
\begin{align*}
\sum_{i=0}^r b_{i,k} &= q^{r-1} \cdot \sum_{j=0}^{q-1}a(k,j;d-r,e)\\
&= q^{r-1}\big(a(k+1,0;d-(r-1),e)-a(k+1,0;d-r,e)+a(k,0;d-r,e)\big),
\end{align*}
where the last equality is an application of \autoref{eqn.TRickySUm}. Further, by setting $k=0$ above and after a short calculation, we see that 
\[
\sO_{E}^{\oplus q^r \binom{q+d-r}{d-r}}
\] is a direct summand of $\big(F^e_* \sO_{\tilde{X}}\big)\big|_E$ and so $\sE_{e,\tilde{X}}$ is not ample. Consequently:

\begin{proposition}\label{lem.FrobPushForwardLocalgeneralBlowups}
Let $S$ be a smooth variety of dimension $d$ and $f: X \to S$ be the blowup of $S$ along a smooth closed subvariety $C \subset S$ of dimension $c-1\leq d-2$. Further, set an isomorphism $g \: \hat{\bA}^d \cong \Spec \hat{\sO}_{S,s} \to S$ where $s \in S$ is a smooth closed point contained in $C$ and write $\hat{\bA}^d = \Spec \kay \llbracket x_1,\ldots ,x_d \rrbracket$ with $(x_{c},\ldots,x_d)$ being local equations for $C$. Consider the following cartesian diagram
\[
\xymatrix@C=5em{
\bP_{\hat{\bA}^{c-1}}^{d-c} \ar[r] \ar[d] & \hat{X} \ar[r]^-{h} \ar[d]_-{\hat{f}} & X \ar[d]^-{f} \\
\hat{\bA}^{c-1} \ar[r]^-{x_c,\ldots,x_d=0} & \hat{\bA}^d \ar[r]^-{g} & S
}
\]
so that $\hat{f}$ is the blowup of $\hat{\bA}^d$ with respect to the ideal $(x_{c}, \ldots, x_d)$. Then,
\[
h^* F^e_* \sO_X = F^e_* \sO_{\hat{X}} \cong \bigoplus_{0 \leq k \leq d-c} \sO_{\hat{X}}(kE)^{\oplus q^{c-1}(a(k+1,0;d-(c-1),e)-a(k+1,0;d-c,e)+a(k,0;d-c,e))} 
\]
where $E$ is the exceptional divisor of $\hat{f}$. Furthermore, the pullback of $F^e_* \sO_X$ to $E \cong \bP_{\hat{\bA}^{c-1}}^{d-c}$ is
\[
\big(F^e_* \sO_{X}\big)\big|_E \cong \bigoplus_{0 \leq k \leq d-c} \sO_E(-k)^{\oplus q^{c-1}(a(k+1,0;d-(c-1),e)-a(k+1,0;d-c,e)+a(k,0;d-c,e))} ,
\]
which has $\sO_{E}^{\oplus q^c \binom{q+d-c}{d-c}}$ as a direct summand. Therefore, $\sE_{e,X}$ is not ample.
\end{proposition}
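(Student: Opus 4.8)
The plan is to reduce the whole statement to an explicit local model, exploiting that both blowing up and $F^e_*$ commute with flat base change. The completion map $g\:\hat{\bA}^d=\Spec\hat{\sO}_{S,s}\to S$ is flat, and since $(x_c,\ldots,x_d)$ cut out $C$ formally at $s$, the universal property of blowing up identifies the fibre product $\hat X=X\times_S\hat{\bA}^d$ with $\Bl_{(x_c,\ldots,x_d)}\hat{\bA}^d$. Because $F^e_*$ commutes with the flat base change $g$, one obtains $h^*F^e_*\sO_X\cong F^e_*\sO_{\hat X}$, so it suffices to compute $F^e_*\sO_{\hat X}$ and, for the positivity assertion, its restriction to the fibre of the exceptional divisor over $s$.

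First I would recognise the local model as a split vector bundle. Over the honest affine space, $V\coloneqq\Bl_{(x_c,\ldots,x_d)}\bA^d$ sits inside $\bA^d\times\bP^{d-c}$, and the second projection exhibits $V$ as the total space of $\sO_{\bP^{d-c}}(-1)\oplus\sO_{\bP^{d-c}}^{\oplus(c-1)}$; in the paper's convention this is $V\cong\bV(\sF)$ with $\sF=\sO_{\bP^{d-c}}(1)\oplus\sO_{\bP^{d-c}}^{\oplus(c-1)}$, its exceptional divisor being the subbundle $\bA^{c-1}\times\bP^{d-c}$. As $\hat X=V\times_{\bA^d}\hat{\bA}^d$ is a flat base change of $V$, the sheaf $F^e_*\sO_{\hat X}$ is the pullback of $F^e_*\sO_V$, so I may compute on $V$ by feeding $\sF$ and $\sN=\sO$ into \autoref{pro.PuschForwardVectorbundle}. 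This gives $F^e_*\sO_V\cong\bigoplus_{0\le i_0,\ldots,i_{c-1}\le q-1}\varpi^*F^e_*\sO_{\bP^{d-c}}(i_0)$, where $\varpi\:V\to\bP^{d-c}$. Splitting $F^e_*\sO_{\bP^{d-c}}(i_0)=\bigoplus_{k=0}^{d-c}\sO_{\bP^{d-c}}(-k)^{\oplus a(k,i_0;d-c,e)}$ (using $0\le i_0\le q-1$) and letting the free indices $i_1,\ldots,i_{c-1}$ contribute a factor $q^{c-1}$, the summand $\varpi^*\sO_{\bP^{d-c}}(-k)$ acquires multiplicity $q^{c-1}\sum_{j=0}^{q-1}a(k,j;d-c,e)$. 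The arithmetic identity \autoref{eqn.TRickySUm} rewrites this sum as $a(k+1,0;d-(c-1),e)-a(k+1,0;d-c,e)+a(k,0;d-c,e)$, yielding the claimed decomposition; pulling back to $\hat X$ and restricting to $E$ (where $\varpi^*\sO(-k)$ becomes $\sO_E(-k)$) produces the stated formula for $(F^e_*\sO_X)|_E$.

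For the non-ampleness I would test on a single fibre $Z\cong\bP^{d-c}$ of $E\to C$ over $s$, which is a genuine projective subvariety of $X$ with $d-c\ge1$. There the computation gives $(F^e_*\sO_X)|_Z\cong\bigoplus_{k=0}^{d-c}\sO_Z(-k)^{\oplus m_k}$ with trivial part of multiplicity $m_0=q^{c-1}\sum_{j=0}^{q-1}a(0,j;d-c,e)=q^{c-1}\binom{q+d-c}{d-c+1}$, which is at least $2$. Restricting the locally split sequence \autoref{eqn.FrobeniusSES} to $Z$ keeps it exact, and the inclusion $\sO_Z\hookrightarrow(F^e_*\sO_X)|_Z$ is the unit $F^{e,\#}$, a nowhere-vanishing section that splits off one of these trivial summands; hence $\sE_{e,X}^\vee|_Z$ still contains $\sO_Z$ as a direct summand, and dually so does $\sE_{e,X}|_Z$. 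Were $\sE_{e,X}$ ample, then $\sE_{e,X}|_Z$ and each of its direct summands would be ample, contradicting the non-ampleness of $\sO_Z$ on the positive-dimensional $Z$. The only genuinely delicate part of the argument is the bookkeeping that converts \autoref{pro.PuschForwardVectorbundle} into the displayed closed form through \autoref{eqn.TRickySUm}; once a single trivial summand beyond the unit is exhibited on $Z$, the failure of ampleness is automatic.
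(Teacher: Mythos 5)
Your argument is correct, and it reaches the conclusion by a genuinely different (if closely related) route. The paper derives this Proposition from the \emph{global} computation in \autoref{sec.BlowupProjSpace}: it realizes $\Bl_{\bP^{c-1}}\bP^d$ as a projective bundle over $\bP^{d-c}$, runs \autoref{pro.PushforwardProjectiveBundles} with its $\bZ$-grading bookkeeping to get the coefficients $b_{i,k}$, restricts to the exceptional divisor, and only then localizes at the point (as explained in \autoref{rem.DirecCumbersoneStrategy}). You instead complete first and identify the local model $\Bl_{(x_c,\ldots,x_d)}\bA^d$ with the total space $\bV\big(\sO_{\bP^{d-c}}(1)\oplus\sO^{\oplus(c-1)}\big)$, so that the much simpler \autoref{pro.PuschForwardVectorbundle} applies directly; the sum $\sum_{j=0}^{q-1}a(k,j;d-c,e)$ then collapses to the stated closed form via \autoref{eqn.TRickySUm}, exactly as in the paper. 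Your route eliminates the grading argument and the auxiliary divisors $H,H'$ at no real cost, since the flat-base-change compatibilities you invoke (for blowups, and for $F^e_*$ of a coherent sheaf on an $F$-finite scheme) are also implicitly used by the paper in asserting $h^*F^e_*\sO_X=F^e_*\sO_{\hat X}$. Two further points in your favour: your multiplicity $q^{c-1}\binom{q+d-c}{d-c+1}$ for the trivial summand of $(F^e_*\sO_X)|_E$ is the correct one, whereas the paper's ``short calculation'' yielding $q^{c}\binom{q+d-c}{d-c}$ overcounts by the factor $d-c+1$ (check against $\Bl_0\bP^2$ in \autoref{ex.HirzebruchSurfaces}, where the trivial part of the restriction to the exceptional curve has multiplicity $q(q+1)/2$, not $q(q+1)$, the latter even exceeding the total rank $q^2$); and your explicit splitting-off of the unit section of \autoref{eqn.FrobeniusSES} --- using that the components $\sO_Z\to\sO_Z(-k)$ with $k\geq 1$ vanish and that the multiplicity of $\sO_Z$ is at least $2$ --- makes precise the passage from a trivial summand of $(F^e_*\sO_X)|_Z$ to one of $\sE_{e,X}^{\vee}|_Z$ and hence of $\sE_{e,X}|_Z$, a step the paper leaves implicit.
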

The importance of \autoref{lem.FrobPushForwardLocalgeneralBlowups} for us can already be appreciated:
\begin{corollary} \label{cor.(-1)CurvesRuleOut}
If $X$ is a smooth surface that admits a $(-1)$ curve, then $\sE_{e,X}$ is not ample.
\end{corollary}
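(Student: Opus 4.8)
The plan is to recognize a surface carrying a $(-1)$-curve as a blowup of a smooth surface at a point, and then to quote \autoref{lem.FrobPushForwardLocalgeneralBlowups} essentially verbatim.

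First I would invoke Castelnuovo's contractibility criterion, which is valid over any algebraically closed field: if $C \subset X$ is a $(-1)$-curve (that is, $C \cong \bP^1$ with $C^2 = -1$, equivalently $C \cong \bP^1$ and $K_X \cdot C = -1$ by adjunction) on a smooth projective surface $X$, then $C$ can be blown down. Concretely, there is a smooth projective surface $S$ together with a birational morphism $f \: X \to S$ realizing $X$ as the blowup of $S$ at a closed (smooth) point $s \in S$, with $C$ equal to the exceptional divisor $E$ of $f$.

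Having put $f \: X \to S$ in this form, I would simply apply \autoref{lem.FrobPushForwardLocalgeneralBlowups} with $d = 2$ and $c = 1$. The blown-up locus is the point $\{s\}$, of dimension $c - 1 = 0$, and the numerical hypothesis $c - 1 \leq d - 2$ reads $0 \leq 0$, so it holds (with equality). The lemma then yields the conclusion outright: it produces a direct summand $\sO_E^{\oplus q^{c}\binom{q+d-c}{d-c}} = \sO_E^{\oplus q(q+1)}$ of $\bigl(F^e_* \sO_X\bigr)\big|_E$. Since $q(q+1) > 1$, this trivial summand cannot be accounted for by the single copy of $\sO_X$ in the Frobenius sequence \autoref{eqn.FrobeniusSES}; restricting that sequence to $E \cong \bP^1$ forces $\sE_{e,X}^{\vee}\big|_E$ to contain a copy of $\sO_E$, so $\sE_{e,X}\big|_E$ is not a sum of line bundles of strictly positive degree and hence is not ample on $E$. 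Therefore $\sE_{e,X}$ is not ample.

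There is essentially no obstacle here: the entire content is packaged in \autoref{lem.FrobPushForwardLocalgeneralBlowups}, and the only thing to verify is that its hypotheses apply, which is exactly the classical fact that a $(-1)$-curve is contractible to a smooth point. The sole point requiring a moment's care is checking that the contraction lands in the allowed range $c - 1 \leq d - 2$ of the lemma, which it does, as an equality.
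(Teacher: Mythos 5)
Your proof is correct and takes exactly the paper's route: the paper's entire proof reads ``Use Castelnuovo's contraction theorem and \autoref{lem.FrobPushForwardLocalgeneralBlowups}'', and you have merely made explicit the verification of the hypotheses ($d=2$, $c=1$, so $c-1\leq d-2$ holds with equality) and the final restriction-to-$E$ step. One incidental remark: the multiplicity of the trivial summand is really $q^{c-1}\binom{q+d-c}{d-c+1}=q(q+1)/2$ here (the closed form printed in \autoref{lem.FrobPushForwardLocalgeneralBlowups} is slightly off, as the direct computation in \autoref{rem.DirecCumbersoneStrategy} confirms), but this changes nothing since any multiplicity $\geq 2$ suffices for your concluding argument.
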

\begin{proof}
Use Castelnuovo's contraction theorem and \autoref{lem.FrobPushForwardLocalgeneralBlowups}.
\end{proof}

\begin{remark} \label{rem.DirecCumbersoneStrategy}
Let us stress how \autoref{cor.(-1)CurvesRuleOut} works. We need to show that $\sE_{e,X}|_C$ is not ample where $\bP^1 \cong C \subset X$ is the $(-1)$-curve. By Castelnuovo's contraction theorem, $C$ is the exceptional divisor of a blowup $X \to S$ at a closed point $s \in S$ of some smooth surface $S$. In principle, the computation of the restriction of $\sE_{e,X}|_C$ can be carried out locally around $s \in S$ yet we do something quite different. Since the computation is local, we are free to replace $X$ by another surface which is isomorphic to $X$ around $C$, say the blowup of $\bP^2$ at the origin. Then, we exploit the global geometry of such blow up to carry out the intersection computation of interest globally. This kind of idea will be exported to the threefold case in \autoref{Main.SubsectionFibrationsContractions} in the proof of \autoref{prop.RulingOutCases}. Nonetheless, we think it is instructive to show how the direct local computation works in the simplest case of $\Bl_0 \bP^2$. Set $x,y$ to be local coordinates around $0 \in \bP^2$. We illustrate next how to describe $(F^e_* X)|_{\bP^1}$ locally, where $X \coloneqq \Bl_0 \bA^2$ and $\bP^1 \subset X$ is the exceptional divisor. Recall that $X$ is described by the affine charts $\kay[x,y/x]$ and $\kay[x/y,y]$ inside $\kay(x,y)$. On $\kay[x,y/x]$, $F^e_* \sO_X$ admits the decomposition $\bigoplus_{0 \leq i,j \leq q-1} \kay[x,y/x] F^e_* x^i (y/x)^j$ whereas $F^e_* \sO_X$ equals $\bigoplus_{0 \leq i,j \leq q-1} \kay[x/y,y] F^e_* (x/y)^i y^j$ on the chart $\kay[x/y,y]$. Thus, $(F^e_* \sO_X)|_{\bP^1}$ equals $\bigoplus_{0 \leq i,j \leq q-1} \kay[y/x] F^e_* x^i (y/x)^j$ on $\kay[y/x]$ and likewise $F^e_* \sO_X$ is $\bigoplus_{0 \leq i,j \leq q-1} \kay[x/y] F^e_* (x/y)^i y^j$ on $\kay[x/y]$; where $\bP^1$ is being realized by the affine charts $\kay[y/x]$ and $\kay[x/y]$ inside $\kay(y/x)$. Now, observe that
\[
F^e_* x^i (y/x)^j = \begin{cases} 
F^e_* (x/y)^{i-j}y^i, & \text{if } i \geq j,\\
(y/x)F^e_* (x/y)^{q-(j-i)}y^i,  & \text{if } j>i.
\end{cases}
\]
In particular,
\[
\bigoplus_{0 \leq i,j \leq q-1} \kay[y/x] F^e_* x^i (y/x)^j = \bigoplus_{i=0}^{q-1} \left( \bigoplus_{j=0}^i \kay[y/x]F_*^e(x/y)^j y^i \oplus \bigoplus_{j=i+1}^{q-1} \kay[y/x](y/x)F_*^e(x/y)^j y^i\right).
\]
Hence, by gluing $\kay[y/x]F_*^e(x/y)^j y^i$ with $\kay[x/y]F_*^e(x/y)^j y^i$ for $j \leq i$; obtaining a copy of $\sO_{\bP^1}$, and $\kay[y/x](y/x)F_*^e(x/y)^j y^i$ with $\kay[x/y]F_*^e(x/y)^j y^i$ for $j>i$; obtaining a copy of $\sO_{\bP^1}(-1)$, we see that $(F^e_* \sO_X)|_{\bP^1}$ is a direct sum of $1+\cdots +q=q(q+1)/2$ many copies of $\sO_{\bP^1}$ and $q^2-q(q+1)/2=q(q-1)/2$ many copies of $\sO_{\bP^1}(-1)$; agreeing with our previous computations. 
\end{remark}

\subsection{Cones} \label{sec.CONESEXAMPLES}
To set notation, we recall some general constructions. For details, see \cite[\href{https://stacks.math.columbia.edu/tag/0EKF}{Tag 0EKF}]{stacks-project}, \cite[II, Exercise 6.3 and V, Example 2.11.4]{Hartshorne}. Let $S=\bigoplus_{i \in \bN} S_i$ be a graded ring that is finitely generated by $S_1$ as an $S_0$-algebra and suppose $S_0$ and so $S$ to be noetherian (e.g. $S_0=\kay$). That is, $S$ is a standard graded ring. Set:
\[
Z \coloneqq \Spec S_0,\quad V \coloneqq \Proj S, \quad  C\coloneqq \Spec S, \quad P \coloneqq \Proj S[t]
\] 
where $S[t]$ is the graded ring obtained from $S$ by adding a free variable $t$ in degree $1$. These are all quasi-compact $Z$-schemes. Additionally, let us denote by $\sigma \:L \to V$ the cone defined by $\sO_V(1)$, that is, $L \coloneqq \Spec_V \bigoplus_{n\geq 0} \sO_V(n) = \Spec_V \Sym \sO_V(1)$, which is a line bundle as $S$ is generated by $S_1$ as an $S_0$-algebra. Then, we have a commutative diagram
\begin{equation}\label{eqn.CommDiagnAffinCones}
\xymatrix{
V \ar[r] \ar[d] &L \ar[r]^-{\sigma} \ar[d]^-{g} & V \ar[d] \\
Z \ar[r] &C \ar[r] & Z
}
\end{equation}
where:
\begin{itemize}
\item $V \to L$ is the zero section of $\sigma$; which is defined by $0 \in H^0\big(V,\sO_V(-1)\big)$ as $\sO_V(1)$ is an invertible sheaf on $V$, \item  $Z \to C$ is the closed embedding cut out by the irrelevant ideal $S_{+} \subset S$, and
\item $g$ is the canonical morphism $g \: L \to \Spec H^0(L,\sO_L) \to C$.
\end{itemize}
Since $S$ is generated by $S_1$ as an $S_0$-algebra, $g$ is the blowup of $C$ along $Z$ and the section $V \to L$ is its exceptional divisor.  

Next, let $\pi \: X \to V$ denote the $\bP^1$-bundle $\bP(\sF) \to V$ defined by $\sF= \sO_{V} \oplus \sO_V(1)$. Note that there are isomorphisms of graded $\sO_V$-algebras
\[
\Sym \sF \cong \Sym \sO_V(1) \otimes_{\sO_V}  \Sym \sO_V  \cong \big(\Sym \sO_V(1) \big) \otimes_{\sO_V} \sO_V[t] \eqqcolon \big(\Sym \sO_V(1) \big)[t]
\]
In particular, the closed subscheme of $X$ defined by $t=0$ is the section of $\pi \: X \to V$ defined by the direct summand quotient $\sF \to \sO_V(1) \to 0$, whose corresponding Cartier divisor we denote by $H$. Further, the open complement of $H \subset X$ is $L = \Spec_V \Sym \sO_V(1)$ and $L \subset X \xrightarrow{\pi} V$ coincides with $\sigma \: L \to V$.

Let $E$ be the Cartier divisor on $X$ defined by the section of $\pi$ corresponding to the other direct summand quotient $\sF \to \sO_V \to 0$. Since $\sF$ splits as a direct sum, $H \cap E = \emptyset$. Moreover, the restriction of $E$ to the open $L$ is none other than the zero section of $\sigma\: L \to V$.

Note that $\Gamma_*(X, \sO_{\bP(\sF)}(1)) = H^0(L,\sO_L)[t]$ as graded rings. Thus, the canonical graded homomorphism $S[t] \to \Gamma_*(X, \sO_{\bP(\sF)}(1)) $ defines a morphism $f\: X \to P$ \cite[\href{https://stacks.math.columbia.edu/tag/01NA}{Tag 01NA}]{stacks-project}, which restricts to $g\: L \to C$. Thus, there is a commutative diagram that extends \autoref{eqn.CommDiagnAffinCones}:
\[
\xymatrix{
V \ar[r] \ar[d] &X \ar[r]^-{\pi} \ar[d]^-{f} & V \ar[d] \\
Z \ar[r] &P \ar[r] & Z
}
\]
In particular, $f$ is the blowup of $P$ along $Z$ and the section $V \to X$; which defines $E$, is its exceptional divisor. Moreover, $f^* \sO_P(1) = \sO_{\bP(\sF)}(1)$. 

Let $G$ be a Cartier divisor on $X$ such that $\sO_X(G)=\pi^* \sO_V(1)$. Then, the tautological quotient $\pi^* \sF \to \sO_{\bP(\sF)}(1) \to 0$ is $\sO_X \oplus \sO_X (G) \to \sO_{\bP(\sF)}(1)$. We see that the divisor of zeros of the global section of $\sO_{\bP(\sF)}(1)$ defined via the tautological quotient is precisely $H$, whence $\sO_{\bP(\sF)}(1) \cong \sO_X(H)$. Thus, the tautological quotient can be thought of as $\sO_X \oplus  \sO_X(G) \to \sO_X(H) \to 0$. Twisting it by $\sO_X(-G)$, we obtain a global section of $\sO_X(H-G)$ whose divisor of zeros is $E$. In other words, we obtain the relation $H-G \sim E$.

Recall that $\pi^* \: \Pic V \to \Pic X$ and $\bZ \to \Pic X$; $1 \mapsto \sO_X(H)$, define an isomorphism $\bZ \oplus \Pic V \xrightarrow{\cong} \Pic X$. Let $\sN$ be an invertible sheaf on $V$, then:
\begin{align} \label{eqn.GenralConeFormula}
F^e_* \big(\sO_X(nH) \otimes \sN\big) 
\cong {} &\sO_X\big(\lfloor n/q \rfloor H\big) \otimes \bigoplus_{j=0}^{[n]_q} \pi^* F^e_* \big(\sO_V(j) \otimes \sN\big)\\ \nonumber
&\oplus \sO_X\big( (\lfloor n/q \rfloor - 1) H\big) \otimes \bigoplus_{j=[n]_q+1}^{q-1} \pi^* F^e_* \big(\sO_V(j) \otimes \sN\big)
\end{align}
It is difficult to say much more for such a general $V=\Proj S$. In what follows, we specialize to Veronese and Segre embeddings. 

\subsubsection{Veronese embeddings} \label{sec.VeroneseEmbeddings}
Let $S$  be the $\varepsilon$-th Veronese subring of the standard graded polynomial ring $\kay[x_0,\ldots, x_d]$. That is, $V$ is the $\varepsilon$-th Veronese embedding of $\bP^d$, and $C$ and $P$ are; respectively, the affine and projective cones over $V$. Thus, $X$ is the blowup of $P$ at its vertex and it can be realized as the $\bP^1$-bundle over $\bP^d$ defined by $\sF= \sO_{\bP^d} \oplus \sO_{\bP^d} (\varepsilon)$. Specializing to $d=1$ recovers the examples in \autoref{ex.HirzebruchSurfaces}. Denote by $H' \subset X$ the pullback of a hyperplane along the morphism $\pi \: X \to \bP^d$. In particular, $G = \varepsilon H'$, and so $H \sim E + \varepsilon H'$. The above formula \autoref{eqn.GenralConeFormula} becomes:
\begin{align*}
F^e_* \big(\sO_X(nH+n'H')\big) 
\cong {} &\sO_X\big(\lfloor n/q \rfloor H\big) \otimes \bigoplus_{j=0}^{[n]_q} \pi^* F^e_* \big(\sO_{\bP^d}(\varepsilon j+n')\big)\\
&\oplus \sO_X\big( (\lfloor n/q \rfloor - 1) H\big) \otimes \bigoplus_{j=[n]_q+1}^{q-1} \pi^* F^e_* \big(\sO_{\bP^d}(\varepsilon j+n') \big),
\end{align*}
where
\[
\pi^* F^e_* \big(\sO_{\bP^d}(j+n') \big) \cong \bigoplus_{l=0}^d \sO_X\big((\lfloor (\varepsilon j+n')/q \rfloor - l) H'\big)^{\oplus a(l,[\varepsilon j+n']_q;d,e)}.
\]
By the projection formula, it suffices to focus on the case $0 \leq n,n' \leq q-1$. Hence,
\begin{align*}
    F^e_* \big(\sO_X(nH+n'H')\big) 
\cong {} & \bigoplus_{l=0}^d \bigoplus_{j=0}^{n}\sO_X\big((\lfloor (\varepsilon j+n')/q \rfloor - l) H'\big)^{\oplus a(l,[\varepsilon j+n']_q;d,e)} \\
&\oplus \bigoplus_{l=0}^d \bigoplus_{j=n+1}^{q-1}\sO_X\big(-E+(\lfloor (\varepsilon j+n')/q \rfloor -\varepsilon - l) H'\big)^{\oplus a(l,[\varepsilon j+n']_q;d,e)} \\
\cong {} & \bigoplus_{l=0}^d \bigoplus_{j=0}^{n}\sO_X\big((\lfloor (\varepsilon j+n')/q \rfloor - l) H'\big)^{\oplus a(l,[\varepsilon j+n']_q;d,e)} \\
&\oplus \bigoplus_{l=0}^d \bigoplus_{j=1}^{q-1-n}\sO_X\big(-E+(\lfloor (-\varepsilon j+n')/q \rfloor - l) H'\big)^{\oplus a(l,[-\varepsilon j+n']_q;d,e)},
\end{align*}
where the last equality follows from noting that $\lfloor (\varepsilon(q-j)+n')/q\rfloor   -\varepsilon = \lfloor (-\varepsilon j + n')/q \rfloor $ and $[\varepsilon(q-j)+n']_q = [-\varepsilon j +n']_q$ for all $j=1,\ldots ,q-1$. At this point, the computation becomes quite involved. We next illustrate the easier, yet more important cases. \emph{Our first simplification is the following assumption:}
\[
q \geq \varepsilon - n' \geq 1.
\]
Next, we introduce two partitions of $\{1,\ldots, q-1\}$.
\[
\{0,\ldots, q-1\} = I_1 \cup I_1 \cup \cdots \cup I_{\varepsilon -1} \cup I_{\varepsilon},
\]
where for $i=1,\ldots, \varepsilon-1$ :
\begin{align*}
I_i &\coloneqq \big\{ \lfloor ((i-1)q-1-n')/\varepsilon \rfloor +1, \ldots, \lfloor  (iq-1-n')/\varepsilon \rfloor \big\},\\
I_{\varepsilon} & \coloneqq \big\{ \lfloor ((\varepsilon-1)q-1-n')/\varepsilon \rfloor +1, \ldots, q-1 \big\}.
\end{align*}
Thus, if $j \in I_i$ then $\lfloor (\varepsilon j + n')/q \rfloor = i-1$ and $[\varepsilon j + n']_q = \varepsilon j + n' -(i-1)q$. The other partition is:
\[
\{1,\ldots, q-1\} = J_1 \cup J_2 \cup \cdots \cup J_{\varepsilon -1} \cup J_{\varepsilon}
\]
where for $i=1,\ldots, \varepsilon-1$ :
\begin{align*}
J_i &\coloneqq \big\{ \lfloor ((i-1)q+n')/\varepsilon \rfloor +1, \ldots, \lfloor  (iq+n')/\varepsilon \rfloor \big\},\\
J_{\varepsilon} & \coloneqq \big\{ \lfloor ((\varepsilon-1)q+n')/\varepsilon \rfloor +1, \ldots, q-1 \big\}.
\end{align*}
Hence, if $j \in J_i$ then $\lfloor (-\varepsilon j + n')/q \rfloor = -i$ and $[-\varepsilon j + n']_q = iq -\varepsilon j + n'$. Of course, if $n'=0$, this is the partition we had in \autoref{ex.HirzebruchSurfaces}. It is convenient to define $J_{-1}\coloneqq \{0\}$.

Let $i_n, i_n'$ be defined by $n \in I_{i_n}$ and $q-1-n \in J_{i_n'}$. Then,
\[
F^e_* \sO_X(nH + n'H') \cong \bigoplus_{k=-i_n+1}^d
 \sO_X(-kH')^{\oplus \varsigma_k } \oplus \bigoplus_{k=1}^{i_n'+d} \sO_X(-E -kH')^{\oplus \sigma_k},
 \]
where $\varsigma_k$ and $\sigma_k$ are computed as follows. For each $l=0,\ldots, d$ and $i=1,\ldots, \varepsilon$, define:
\begin{align*}
\varsigma_{i}^{(l)} &\coloneqq \sum_{j \in I_i \cap [0,n]}  a(l,[\varepsilon j + n']_q;d,e) =  \sum_{j \in I_i \cap [0,n]} a(l,\varepsilon j + n' -(i-1)q;d,e) \\
\sigma_i^{(l)}  &\coloneqq \sum_{j \in J_i \cap [1, q-1-n]} a(l,[-\varepsilon j +n']_q;d,e) = \sum_{j \in J_i \cap [1, q-1-n]} a(l,iq-\varepsilon j +n';d,e).
\end{align*}
Then,
\[
\varsigma_k = \sum_{i-1-l = -k} \varsigma_i^{(l)} = \sum_{l=k+i-1} \varsigma_i^{(l)} = \sum_{i=1}^{i_n} \varsigma_{i}^{(k+i-1)},  
\]
and likewise
\[
\sigma_k = \sum_{i+l=k} \sigma_{i}^{(l)} = \sum_{i=1}^{i_n'}\sigma_i^{(k-i)}.
\]
To go on, we set $n,n'=0$.  Then, $i_n=1$, $i'_n= \varepsilon$, and:
\[
F^e_* \sO_X = \bigoplus_{k=0}^d \sO_X(-kH')^{\oplus \varsigma_k} \oplus \bigoplus_{k=1}^{\varepsilon + d} \sO_X(-E - k H')^{\oplus \sigma_k}, 
 \]
where,
\[
\varsigma_k = \varsigma_1^{(k)} = a(k,0;d,e), \quad \sigma_k = \sum_{\substack{1 \leq i \leq \varepsilon \\ 
0 \leq l \leq d \\
i+l = k} } \sigma_i^{(l)} = \sum_{\substack{1 \leq i \leq \varepsilon \\ 
0 \leq l \leq d \\
i+l = k} } \sum_{j \in J_i} a(l,iq-\varepsilon j;d,e),
\]
and
\begin{align*}
J_i &= \big\{ ((i-1)q - \rho_{c,(i-1)})/\varepsilon + 1, \ldots, (iq-\rho_{c,i})/\varepsilon \big\}, \quad i=1,\ldots, \varepsilon.
\end{align*}
where $\rho_{c,i}$ is defined as in \autoref{ex.HirzebruchSurfaces} and $c$ is the residue of $q$ modulo $\varepsilon$.

Recall that $E \cong \bP^d$ and, under this isomorphism, $\sO_X(H')\big|_E $ and $\sO_X(E)\big|_E$ correspond to $\sO_{\bP^d}(1)$ and $\sO_{\bP^d}(-\varepsilon)$. Then,
\[
(F^e_* \sO_X)\big|_{E \cong \bP^d} \cong \bigoplus_{k=0}^{d}\sO_{\bP^d}(-k)^{\oplus(\varsigma_k + \sigma_{\varepsilon + k})} \oplus \bigoplus_{k=1}^{\varepsilon -1 } \sO_{\bP^d}(k)^{\oplus \sigma_{\varepsilon-k}}. 
\]
Therefore, by looking at the summand $k=0$, we find that $\sE_{e,X}$ is not ample.
\begin{remark} \label{rem.ApplicationLocalAlgebraVeronese}
For Hirzebruch surfaces, we can use the above to compute $F^e_* \sO_P$ even if $P$ is singular; see \autoref{rem.ApplicationLocalAlgebraConesRationalNormalCurve}. Indeed, let $L \subset P$ be the pushforward to $P$ of the restriction of $H'$ to $X \smallsetminus E = P \smallsetminus \{0\} \subset P$. Thus, $L$ is a $\bQ$-Cartier divisor on $P$ with Cartier index $\varepsilon$, indeed $\varepsilon L \sim H$ where $H \subset P$ denotes the closure of the restriction of $H \subset X$ to $X \smallsetminus E = P \smallsetminus \{0\}$. Restricting $F^e_* \sO_X$ to $X \smallsetminus E = P \smallsetminus \{0\} \subset P$ and then pushing it forward to $P$ yields:
\[
F^e_* \sO_P \cong \bigoplus_{k=0}^d \sO_P(-k L)^{\oplus (\varsigma_k  + \sigma_{k+\varepsilon})} \oplus \bigoplus_{k=1}^{\varepsilon -1} \sO_P (-kL)^{\oplus \sigma_k}. 
\]
In particular, if $d \geq \varepsilon -1$ then $\varsigma_0 + \sigma_{\varepsilon} = 1+ \sigma_{\varepsilon}$ is the $e$-th $F$-splitting number of $S$. In general, it is $\sum_{k=0}^{\lfloor d/\varepsilon \rfloor} \varsigma_{k\varepsilon} + \sigma_{(k+1)\varepsilon}$. To the best of the author's knowledge, such explicit description of $F^e_* \sO_P$ and so of $F^e_* \sO_{P,0}$ has not been worked out before. We recover that $s(\sO_{P,0}) = 1/\varepsilon$.
\end{remark}
\subsubsection{Segre embeddings} \label{sec.SegreEmbeddings}
Let $S \coloneqq \kay [x_0,\ldots ,x_r] \# \kay[y_0,\ldots, y_s]$ be the Segre product of two standard graded polynomial $\kay$-algebras. Then, $V \cong \bP^r \times \bP^s$ and $X$ is the $\bP^1$-bundle over $\bP^r \times \bP^s$ defined by $\sF = \sO_{\bP^r \times \bP^s} \oplus \sO_{\bP^r \times \bP^s}(1,1)$. We let $\sO_X(G_1) = \pi^* \sO(1,0)$ and $\sO_X(G_2) = \pi^* \sO(0,1)$, so that $G=G_1+G_2$. Thus, the Cartier divisors $H, G_1, G_2$ are free generators of $\Pic X$. Letting $0 \leq n,n_1, n_2 \leq q-1$:  
\[
F^e_* \sO_X(n H+n_1 G_1 + n_2 G_2) \cong \bigoplus_{j=0}^n \pi^* F^e_* \sO(j+n_1, j +n_2) \oplus  \bigoplus_{j=n+1}^{q-1}  \sO_X(-H) \otimes \pi^* F^e_*\sO(j+n_1, j+n_2),
\]
which is isomorphic to the direct sum of
\[
\bigoplus_{j=0}^n \bigoplus_{\substack{0 \leq k \leq r\\ 0 
 \leq l \leq s}} \sO_X((\lfloor (j+n_1)/q \rfloor -k)G_1 + (\lfloor (j+n_2)/q \rfloor-l) G_2)^{\oplus a(k,[j+n_1]_q;r,e)a(l,[j+n_2]_q;s,e)}
\]
with
\[
\bigoplus_{j=n+1}^{q-1} \bigoplus_{\substack{0 \leq k \leq r\\ 0 
 \leq l \leq s}} \sO_X(-H+(\lfloor (j+n_1)/q \rfloor - k) G_1 + (\lfloor (j+n_2)/q \rfloor-l )G_2)^{\oplus a(k,[j+n_1]_q;r,e)a(l,[j+n_2]_q;s,e)}.
\]

Let us set $n,n_1,n_2=0$. Then,
\begin{align*}
&F^e_* \sO_X \\
\cong{}  &\bigoplus_{\substack{0 \leq k \leq r\\ 0 
 \leq l \leq s}} \sO_X(-kG_1 -l G_2)^{\oplus a(k,0;r,e)a(l,0;s,e)} \oplus \bigoplus_{\substack{0 \leq k \leq r\\ 0 
 \leq l \leq s}} \sO_X(-H-kG_1 -l G_2)^{\oplus \sum_{j=1}^{q-1} a(k,j;r,e)a(l,j;s,e)}\\
 \cong{}  &\bigoplus_{\substack{0 \leq k \leq r\\ 0 
 \leq l \leq s}} \sO_X(-kG_1 -l G_2)^{\oplus a(k,0;r,e)a(l,0;s,e)} \oplus \bigoplus_{\substack{1 \leq k \leq r+1\\ 1 
 \leq l \leq s+1}} \sO_X(-E-kG_1 -lG_2)^{\oplus \sigma_{k-1,l-1}},
\end{align*}
where $\sigma_{k,l} \coloneqq \sum_{j=1}^{q-1} a(k,j;r,e)a(l,j;s,e)$. In this example, $E \cong \bP^r \times \bP^s$, and $\sO_X(E)\big|_E$, $\sO_X(G_1)\big|_E$, $\sO_X(G_2)\big|_E$ correspond to $\sO(-1,-1)$, $\sO(1,0)$, $\sO(0,1)$; respectively. Then,
\[
(F^e_* \sO_X)\big|_{E \cong \bP^r \times \bP^s} \cong  \bigoplus_{\substack{0 \leq k \leq r\\ 0 
 \leq l \leq s}} \sO(-k,-l)^{\oplus \sum_{j=0}^{q-1}a(k,j;r,e)a(l,j;s,e)}.
\]
As before, looking at $k,l=0$ let us conclude that $\sE_{e,X}$ is not ample.
\begin{remark} \label{rem.ApplicationLocalAlgebraSegreProduc}
We may describe $F^e_* \sO_P$ as in \autoref{rem.ApplicationLocalAlgebraConesRationalNormalCurve} and \autoref{rem.ApplicationLocalAlgebraVeronese}. Let $L_i$ be the restriction of $G_i$ to $X \smallsetminus E = P \smallsetminus \{0\}$ followed by its pushforward to $P$. Then, $L_i$ is a Weil divisor on $P$ and $\Cl P = \bZ \cdot L_1 \oplus \bZ \cdot L_2$. Of course, $L_1 + L_2 \sim H$ where $H \subset P$ denotes the restriction of $H$ to $X \smallsetminus E = P \smallsetminus \{0\}$ followed by its pushforward to $P$. Then, $L_1 + L_2 \sim 0$ on the affine cone $P \smallsetminus H = \Spec S$ and
\[
F^e_* \sO_{P\smallsetminus H} \cong \bigoplus_{i=-r}^{s} \sO_{P\smallsetminus H}(iL)^{\oplus \sum_{l-k=i}\sum_{j=0}^{q-1} a(k,j;r,e)a(l,j;s,e)}
\]
where $L$ denotes the class of $L_1$ on $P\smallsetminus H$, which freely generates $\Cl (P \smallsetminus H) = \Cl S$. Looking at $i=0$, the $e$-th $F$-splitting number of $S$ is $\sum_{l=k}\sum_{j=0}^{q-1} a(k,j;r,e)a(l,j;s,e)=\sum_{k}\sum_{j=0}^{q-1} a(k,j;r,e)a(k,j;s,e)$. From the proof of \autoref{pro.PushforwardProjectiveBundles}, we know that $a(k,j;r,e)$ is the coefficient of $u^{j+kq}$ in $(1+u+\cdots + u^{q-1})^{r+1}$ and analogously for $a(l,j;s,e)$. Thus, the above proves that the $e$-th $F$-splitting number of $S$ is the sum of the coefficients of monomials $\{u^kv^k\}_k$ in the product $(1+u+\cdots + u^{q-1})^{r+1} (1+v+\cdots + v^{q-1})^{s+1}$ thereby recovering \cite[Example 7]{SinghFSignatureOfAffineSemigroup}.
\end{remark}
\subsection{Quadrics} \label{ex.QuadricsAnalysis}
Let $\bQ^d$ be the $d$-dimensional smooth quadric. That is, $\bQ^d$ is the hypersurface of $\mathbb{P}^{d+1}$ cut out by the equation $x_0^2 + x_1x_2 + \cdots + x_d x_{d+1}=0$ if $d$ is odd or by the equation $x_0 x_1 + \ldots + x_d x_{d+1}=0$ if $d$ is even. The Frobenius pushforwards of invertible sheaves (in fact, of arithmetically Cohen--Macaulay locally free sheaves) on $\bQ^d$ has been thoroughly described in \cite{AchingerFrobeniusPushForwardQuadrics,LangerDAffinityFrobeniusQuadrics}, where the reader can find the precise description. Here, we are interested on the positivity of $\sE_{e}$, which we study below. From \autoref{ex.ProducProjectiveSpaces}, we know that $\sE_e$ is ample for $d=1$ but not for $d=2$ as $\bQ^1 \cong \bP^1$ and $\bQ^2 \cong \bP^1 \times \bP^1$. We show next that $\sE_e$ is ample for $d \geq 3$ if and only if $p>2$. 

Recall that $\omega_{\bQ^d} = \sO_{\bQ^d}(-d)$ so that 
\[
F^e_* \omega_{\bQ^d}^{1-q} \cong F^e_*\sO_{\bQ^d}(d(q-1)).
\]
Let $\sS$ denote the spinor bundle on $\bQ^d$ (following the notation in \cite[\S 1.2]{AchingerFrobeniusPushForwardQuadrics}),\footnote{That is, $\sS$ is the spinor bundle if $d$ is odd and is the direct sum of the two spinor bundles if $d$ is even.} which is a locally free sheaf of rank $2^{\lfloor d/2 \rfloor}$. See \cite{LangerDAffinityFrobeniusQuadrics,AchingerFrobeniusPushForwardQuadrics,AddingtonSpinorSheavesOnSingularQuadrics,AddingtonSpinorsSheavesCIQuadrics,OttavianiSpinorBundleOnQuadrics,KapranovDerivedCategoryCoherentBdlsQuadircs} for more on spinor sheaves on quadrics.  According \cite[Theorems 2 and 3]{AchingerFrobeniusPushForwardQuadrics}, $F^e_*\omega_{\bQ^d}^{1-q}$ admits a direct sum decomposition
\begin{equation} \label{eqn.QuadricDecomposition}
F^e_*\omega_{\bQ^d}^{1-q} \cong \bigoplus_{i \in \bZ} \sO_{\bQ^d}(i)^{\oplus a_i} \oplus \bigoplus_{j \in \bZ} \sS(j)^{\oplus b_j} 
\end{equation}
where $a_i \neq 0$ if and only if 
\[
0 \leq d(q-1) - iq \leq d(q-1)
\]
and $b_j \neq 0$ if and only if
\[
\begin{cases} 
\frac{d}{2}\big(p^e-p^{e-1}\big)-p^e+p^{e-1} \leq d\big(p^e-1\big) - jp^e \leq \frac{d}{2}\big( p^e-p^{e-1}\big) - p^{e-1} + d\big( p^{e-1}-1\big), &  p \neq 2,\\
\big(\lfloor d/2 \rfloor - 1\big) 2^{e-1} \leq d(2^e-1) - j2^e \leq d(2^e-1)-2^e-\big(\lfloor d/2 \rfloor - 1\big) 2^{e-1} ,  &  p=2.
\end{cases}
\]
Equivalently, $b_j \neq 0$ if and only if
\[
\begin{cases} 
\frac{d}{2}p^e-\left(\frac{d}{2}-1\right)p^{e-1}\leq jp^e \leq  \left(\frac{d}{2}+1\right)p^e+\left(\frac{d}{2}-1\right)p^{e-1} -d, &  p \neq 2,\\
2^e+\big(\lfloor d/2 \rfloor - 1\big) 2^{e-1} \leq j2^e \leq d(2^e-1)-\big(\lfloor d/2 \rfloor - 1\big) 2^{e-1},  &  p=2.
\end{cases}
\]
In particular, $a_i \neq 0$ if and only if $0 \leq i \leq d-d/q$. Thus, $a_i=0$ unless $0 \leq i \leq d-1$. 

To analyze the vanishing of $b_j$, we must consider whether or not $p = 2$. \emph{Suppose $p \neq 2$ first}. Note that $d/2-1 \geq 3/2-1 = 1/2 >0$ (if $d \geq 3$). In particular, if $b_j \neq 0$ then
\[
j \geq \frac{d}{2} - \left(\frac{d}{2}-1\right) \frac{1}{p} \geq \frac{d}{2} - \left(\frac{d}{2}-1\right) \frac{1}{3} = \frac{d}{3} + \frac{1}{3} \geq 1 + \frac{1}{3},
\]
as $p \geq 3$. Hence, $b_j = 0$ if $j \leq 1$. Likewise, if $b_j \neq 0$ then
\[
j \leq \frac{d}{2}+1 + \left(\frac{d}{2} -1\right)\frac{1}{p}-\frac{d}{p^e} < \frac{d}{2}+1 + \left(\frac{d}{2} -1\right)\frac{1}{3} = \frac{2}{3}(d+1),
\]
and so $j \leq d-1$. That is, $b_j \neq 0$ implies $2 \leq j \leq d-1$. In conclusion, if $p\geq 3$, the only sheaves showing up in \autoref{eqn.QuadricDecomposition} are (possibly) in the list
\[
\sO_{\bQ^d}, \sO_{\bQ^d}(1), \ldots ,\sO_{\bQ^d}(d-1), \sS(2), \ldots, \sS(d-1) \quad p\neq 2.
\]
The above list cannot be shortened as it is sharp for $d=3$ and $p \geq 5$ (as well as for $d=4$ and $e \geq 3$). Indeed, we readily see that $\sS(d-1)$ shows up if and only if $d \leq (4p^e-2p^{e-1})/(p^e-p^{e-1}+2)$. In particular, for $d=3$, this always happens unless $(e,p)=(1,3)$. For $d=4$, this is always the case unless either $e=1$ or $(e,p)=(2,3)$. However, for $d \geq 5$, this never happens for $e \gg 0$.

\emph{Let us suppose now that $p=2$.} If $b_j \neq 0$ then
\[
j \geq 1+\frac{1}{2}\big(\lfloor d/2 \rfloor - 1\big) \geq 1+\frac{1}{2}\big(\lfloor 3/2 \rfloor - 1\big) = 1, \quad
j \leq d(1-1/2^e) - \frac{1}{2}\big(\lfloor d/2 \rfloor - 1\big) \leq d-1.
\]
Thus, if $b_j \neq 0$ then $1 \leq j \leq d-1$. Hence, if $p=2$, the only sheaves showing up in \autoref{eqn.QuadricDecomposition} are (possibly) in the list
\[
\sO_{\bQ^d}, \sO_{\bQ^d}(1), \ldots ,\sO_{\bQ^d}(d-1), \sS(1), \sS(2), \ldots, \sS(d-1) \quad p=2,
\]
which cannot be shortened any further as the case $d=3$ shows (for all $e \geq 1$).

Observe that $\sO_{\bQ^d}$ must show up with multiplicity $a_0=1$. Indeed, $\big(F^e_*\omega_{\bQ^d}^{1-q}\big)^{\vee} \cong F^e_* \sO_{\bQ^d}$ and by counting global sections we get $a_0=1$. In particular, $\sE_e$ admits a direct sum decomposition with summands from the list  
\begin{equation} \label{eqn.ListOfSheaves}
 \sO_{\bQ^d}(1), \ldots ,\sO_{\bQ^d}(d-1), \sS(1), \sS(2), \ldots, \sS(d-1),
\end{equation}
where $\sS(1)$ occurs if and only if $p=2$. 
\begin{claim}
$\sS(1)$ is globally generated but not ample, and so $\sS(j)$ is ample for all $j\geq 2$.
\end{claim}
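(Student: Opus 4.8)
The plan is to deduce everything from the fundamental exact sequence of the spinor bundle. First I would recall from \cite{OttavianiSpinorBundleOnQuadrics} (\cf \cite{AchingerFrobeniusPushForwardQuadrics}) that in the present normalization $\sS$ sits in a short exact sequence
\[
0 \to \sS \to \sO_{\bQ^d}^{\oplus N} \to \sS(1) \to 0, \qquad N = 2^{\lfloor d/2\rfloor +1},
\]
where for even $d$ this is the direct sum of the two Ottaviani sequences interchanging the spinor summands $\sS'$ and $\sS''$. This presents $\sS(1)$ as a quotient of a trivial bundle, so $\sS(1)$ is globally generated; the first assertion of the Claim is then immediate once one confirms, by comparing with \autoref{eqn.QuadricDecomposition} and \eqref{eqn.ListOfSheaves}, that the normalization of $\sS$ is the one for which $\sS(1)$ (rather than $\sS$) is the globally generated twist. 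Taking first Chern classes in the sequence and using $\rk \sS(1) = \rk \sS = 2^{\lfloor d/2\rfloor}$ together with $\Pic \bQ^d = \bZ\cdot H$ (as $d\geq 3$), I also read off $c_1(\sS) = -2^{\lfloor d/2 \rfloor -1}H = -\tfrac12 (\rk\sS)\,H$.

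For the failure of ampleness I would restrict to a line. Since $d\geq 3$, the quadric $\bQ^d$ contains a line $\ell \cong \bP^1$, with $H\cdot \ell =1$. As $\sS(1)$ is globally generated, the restriction $\sS(1)\big|_{\ell}$ is nef on $\bP^1$, so every direct summand has degree $\geq 0$. On the other hand,
\[
\deg\big(\sS(1)\big|_{\ell}\big) = c_1(\sS)\cdot \ell + (\rk \sS)(H\cdot \ell) = -2^{\lfloor d/2\rfloor -1} + 2^{\lfloor d/2\rfloor} = 2^{\lfloor d/2\rfloor -1},
\]
which is strictly smaller than $\rk \sS = 2^{\lfloor d/2\rfloor}$. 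Since a bundle on $\bP^1$ is ample precisely when every summand has degree $\geq 1$ --- which forces total degree at least the rank --- the bundle $\sS(1)\big|_{\ell}$ must carry a degree-$0$, hence trivial, summand. As the restriction of an ample sheaf is ample, $\sS(1)$ is not ample. This degree count is the heart of the argument and works uniformly in the parity of $d$, sidestepping any need for the explicit splitting type of $\sS\big|_\ell$.

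Finally, for $j\geq 2$ I would twist the surjection $\sO_{\bQ^d}^{\oplus N}\twoheadrightarrow \sS(1)$ by $\sO_{\bQ^d}(j-1)$ to obtain a surjection $\sO_{\bQ^d}(j-1)^{\oplus N}\twoheadrightarrow \sS(j)$. For $j\geq 2$ the line bundle $\sO_{\bQ^d}(j-1)$ is ample, hence so is the vector bundle $\sO_{\bQ^d}(j-1)^{\oplus N}$ (its projectivization is $\bQ^d\times \bP^{N-1}$, with tautological sheaf the external product of two ample sheaves), and a quotient of an ample bundle is ample. Therefore $\sS(j)$ is ample for all $j\geq 2$, yielding the last assertion.

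The only genuine subtlety I anticipate is bookkeeping the normalization and the even-dimensional case: I must make sure that in the conventions of \cite{AchingerFrobeniusPushForwardQuadrics} it is indeed $\sS(1)$ that appears as the quotient in the tautological sequence, so that both the $c_1$ computation and the global generation land on the correct twist; and when $d$ is even I only need that the same $c_1$ and global-generation input hold for each of $\sS'$ and $\sS''$, which the paired Ottaviani sequences provide. Everything else reduces to the elementary ampleness criterion on $\bP^1$ and the standard permanence of ampleness under restriction and quotient.
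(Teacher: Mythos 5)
Your proof is correct, but the non-ampleness step takes a genuinely different route from the paper's. The paper proves that $\sS(1)$ is not ample by induction on $d$, using the matrix-factorization description of how spinor bundles restrict to hyperplane sections ($\sS(1)$ on $\bQ^{d}$ restricting to $\sS(1)$, up to relabeling $\sS_{\pm}$, on $\bQ^{d-1}$) to reduce to the base case $\bQ^2 \cong \bP^1 \times \bP^1$, where $\sS(1) \cong \sO(1,0)\oplus\sO(0,1)$ is visibly nef but not ample. You instead restrict to a single line $\ell \subset \bQ^d$ and run a degree count: from the tautological sequence one gets $c_1(\sS) = -\tfrac12(\rk\sS)H$, so $\deg\bigl(\sS(1)|_{\ell}\bigr) = \tfrac12 \rk\sS < \rk\sS$, which together with nefness forces a trivial summand of $\sS(1)|_{\ell}$. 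Your argument is more self-contained --- it reuses only the exact sequence already needed for global generation, plus $\Pic\bQ^d = \bZ H$ and elementary facts about bundles on $\bP^1$ --- and it handles both parities of $d$ uniformly, whereas the paper's induction yields finer information (the precise way $\sS(1)$ degenerates along hyperplane sections) at the cost of invoking the inductive construction of spinor sheaves. The global generation step and the deduction that $\sS(j)$ is ample for $j\geq 2$ (twisting the surjection $\sO_{\bQ^d}^{\oplus N}\twoheadrightarrow\sS(1)$ by the ample $\sO_{\bQ^d}(j-1)$ and using that quotients of ample bundles are ample) coincide with what the paper does.
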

\begin{proof}
To see why $\sS(1)$ is globally generated, use the short exact sequence 
\[
0 \to \sS \to \sO_{\bQ^d}^{\oplus 2^{\lfloor d/2 \rfloor +1}} \to \sS(1) \to 0;
\]
see \cite[\S 1.2]{LangerDAffinityFrobeniusQuadrics} or \cite[(1.3)]{AchingerFrobeniusPushForwardQuadrics}. It remains to explain why $\sS(1)$ is not ample. This can be done by induction on $d$ using how $\sS(1)$ restricts on hyperplane sections; see \cite[\S 2.2.2]{AddingtonSpinorsSheavesCIQuadrics}, and that $\sS(1)$ is not ample for $d=2$. Indeed, using the inductive construction of $\sS$ in terms of matrix factorizations enable us to see that: $\sS(1)= \sS_+(1)\oplus \sS_-(1)$ on $\bQ^{2k}$ restricts to $\sS(1) \oplus \sS(1)$ on $\bQ^{2k-1}=\bQ^{2k} \cap (H\: x_0=x_1)$ and that $\sS(1)$ on $\bQ^{2k+1}$ restricts to $\sS_+(1)\oplus \sS_-(1) = \sS(1)$ on $\bQ^{2k}=\bQ^{2k+1} \cap (H \: x_0=0)$.
\end{proof}

Additionally, $\sS^{\vee} \cong \sS(1)$ \cite[\S 1.1]{LangerDAffinityFrobeniusQuadrics}. In particular, for each sheaf $\sF$ in the above list \autoref{eqn.ListOfSheaves}, $(\sF \otimes \omega_{\bQ^d})^{\vee}$ is ample. Indeed,
\[
\big(\sS(j) \otimes \omega_{\bQ^d} \big)^{\vee } \cong \sS^{\vee} \otimes \sO_{\bQ^d}(d-j) \cong \sS(d-j+1),
\]
which is ample if and only if $j< d$. In other words, $(\sB_e^d)^{\vee}$ is ample. Summing up:
\begin{corollary} \label{cor.Quadrics}
On $\bQ^d$ with $d \geq 3$, $\sB_e^{1,\vee}= \sE_e$ is ample if and only if $p\neq 2$. Further, $\sB_e^{d,\vee} =(\sE_e \otimes \omega)^{\vee}$ is ample for all $p$.
\end{corollary}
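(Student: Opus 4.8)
The plan is to read off both ampleness statements directly from the direct-sum decomposition of $\sE_e$ recorded just before the statement (with summands in \autoref{eqn.ListOfSheaves}), using the elementary principle that a locally free sheaf which splits as a finite direct sum is ample if and only if each of its direct summands is ample (one direction because a direct summand is a quotient, hence ample if the total sheaf is; the other because a direct sum of ample bundles is ample).

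First I would recall that $\sE_e$ is a direct sum of the line bundles $\sO_{\bQ^d}(i)$ with $1 \le i \le d-1$ together with the twisted spinor sheaves $\sS(j)$ with $1 \le j \le d-1$, where the summand $\sS(1)$ is present precisely when $p = 2$. Each $\sO_{\bQ^d}(i)$ with $i \ge 1$ is ample because $\sO_{\bQ^d}(1)$ is very ample, and by the preceding Claim every $\sS(j)$ with $j \ge 2$ is ample while $\sS(1)$ is globally generated but not ample. Hence, when $p \ne 2$ every summand of $\sE_e$ is ample and so $\sE_e$ is ample; when $p = 2$ the non-ample summand $\sS(1)$ occurs, so $\sE_e$ fails to be ample. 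This settles the first assertion. For the second assertion I would twist and dualize summand by summand: since $\omega_{\bQ^d} \cong \sO_{\bQ^d}(-d)$, the line-bundle summands contribute $(\sO_{\bQ^d}(i) \otimes \omega_{\bQ^d})^{\vee} \cong \sO_{\bQ^d}(d-i)$, ample for $1 \le i \le d-1$, and the spinor summands contribute $(\sS(j) \otimes \omega_{\bQ^d})^{\vee} \cong \sS(d-j+1)$ — using $\sS^{\vee} \cong \sS(1)$ — which is ample for $1 \le j \le d-1$ since then $2 \le d-j+1 \le d$ and the Claim applies. Thus every summand of $(\sE_e \otimes \omega_{\bQ^d})^{\vee} = \sB_e^{d,\vee}$ is ample regardless of $p$, so $\sB_e^{d,\vee}$ is ample for all $p$.

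The conceptual work has all been done upstream — in Achinger's splitting \autoref{eqn.QuadricDecomposition}, in the determination of exactly which twists occur in $\sE_e$, and in the Claim identifying the ampleness threshold $j \ge 2$ for the spinor twists — so there is no genuine obstacle at this stage beyond invoking the direct-sum criterion for ampleness and bookkeeping the twists. The only delicate point is the borderline summand $\sS(1)$: it is exactly this sheaf, occurring if and only if $p = 2$, that produces the characteristic dependence, so the crux is simply to be sure it is genuinely present for $p = 2$ (as recorded before the statement) and genuinely non-ample (as in the Claim), the two facts that together pin down the ``if and only if''.
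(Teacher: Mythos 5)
Your proposal is correct and follows exactly the route the paper takes: read off the decomposition of $\sE_e$ into summands from the list preceding the corollary, invoke the Claim that $\sS(1)$ is the unique non-ample summand (occurring precisely when $p=2$) while $\sO_{\bQ^d}(i)$ for $i\geq 1$ and $\sS(j)$ for $j\geq 2$ are ample, and for the second assertion twist and dualize summand by summand using $\omega_{\bQ^d}\cong\sO_{\bQ^d}(-d)$ and $\sS^{\vee}\cong\sS(1)$ to land back in the ample range. The only content you add beyond the paper's own wrap-up is the explicit statement of the direct-sum criterion for ampleness, which the paper leaves implicit.
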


\begin{remark}
In principle, one may combine the ideas of \autoref{sec.CONESEXAMPLES} with the computations in \cite{LangerDAffinityFrobeniusQuadrics,AchingerFrobeniusPushForwardQuadrics} to compute $F^e_* R_d$ where $R_d$ is the affine cone over $\bQ^d$, \cf \cite{GesselMonskyTheLimit,TrivediHKFunctionsQuadrics}. See \autoref{rem.ApplicationLocalAlgebraConesRationalNormalCurve}, \autoref{rem.ApplicationLocalAlgebraVeronese}, \autoref{rem.ApplicationLocalAlgebraSegreProduc}. However, this will be pursued elsewhere.
\end{remark}

\section{On the Positivity of Frobenius Trace Kernels} \label{sec.MainSection}

In this section, we study the consequences that positivity conditions on $\sE_{e,X}$ have on the geometry of $X$. Throughout this section, we work on the following setup.
\begin{setup} \label{setup.Positivity}
Let $X$ be a smooth projective variety of dimension $d$. Set $0\neq e \in \bN$ and $\sW_e = \sW_{e,X} \coloneqq  F^e_* \omega_X^{1-q}$, so that $\sE_e = \sE_{e,X} = \ker(\tau^e \:\sW_e \to \sO_X)$.
\end{setup}

\begin{remark}[On the positivity of $\sE_{e}$ with respect to $e$] \label{rem.SequenceOfQuotientsOfE_e}
 We explain why there is a sequence of quotient maps
 \[
  \cdots \twoheadrightarrow \sE_{3,X} \twoheadrightarrow \sE_{2,X}\twoheadrightarrow \sE_{1,X}
 \]
In particular, letting $\sP$ be a positivity property that is inherited to quotients (\eg ampleness, nefness, global generation), if $\sE_{e,X}$ has $\sP$ for all $e \gg 0$ then it has it for all $e>0$. Consider the definitional short exact sequence:
\[
0 \to \sO_X \xrightarrow{F^{e,\#}}F_*^e \sO_X \to \sB_{e,X}^1 \to 0
\]
and push it forward along $F^d$ to obtain
\[
0 \to F^d_*\sO_X \xrightarrow{F^d_*F^{e,\#}}F_*^{d+e} \sO_X \to F^d_*\sB_{e,X}^1 \to 0
\]
which is exact as $F^d$ is affine. Since we also have the short exact sequence 
\[
0 \to \sO_X \xrightarrow{F^{d,\#}}F_*^d \sO_X \to \sB_{d,X}^1 \to 0
\]
we obtain the following one:
\[
0 \to \sB_{d,X}^1 \to \sB_{d+e,X}^1 \to F^d_*\sB_{e,X}^1 \to 0
\]
Dualizing it yields:
\begin{equation}\label{eqn.SESVariationOFEwrte}
    0 \to F^d_* \Bigl(\sE_{e,X} 
\otimes \omega_X^{1-p^d}\Bigr) \to \sE_{d+e,X} \to \sE_{d,X} \to 0
\end{equation}
However, it is unclear to the authors whether $\sE_{e,X}$ being positive for some $e \in \bN$; say $e=1$, implies it for all $e>0$. The reason is that it is unclear how to preserve positivity along Frobenius pushforwards. Also, see \autoref{rem.NaiveAmplenessForOneiMPLIESfORALL} and \autoref{que.AmplenessForSomeImplesAll} below.
\end{remark}

\subsection{Global generation}
In this subsection, we rely on \cite{MurayamaFrobeniusSeshadri,MurayamaPhDThesis}. 

\begin{lemma} \label{lem.GlobalGenerationOfsF_e}
Working in \autoref{setup.Positivity}, $\sW_e$ is globally generated if and only if $\sE_e$ is globally generated and $X$ is $F$-split.
 \end{lemma}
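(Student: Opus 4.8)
The plan is to work entirely from the defining sequence \autoref{eqn.SESDefiningE_e},
\[
0 \to \sE_e \to \sW_e \xrightarrow{\tau^e} \sO_X \to 0,
\]
together with the fact recorded just after \autoref{eqn.FrobeniusSES} that this sequence splits exactly when $X$ is $F$-split. The one observation that makes everything go is that a splitting is the same datum as a section $\sO_X \to \sW_e$ of $\tau^e$, i.e. a class $s \in H^0(X,\sW_e)$ with $\tau^e(s)=1$; equivalently, $X$ is $F$-split if and only if $H^0(\tau^e)\colon H^0(X,\sW_e)\to H^0(X,\sO_X)=\kay$ is surjective.

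For the backward implication I would argue directly. If $X$ is $F$-split then \autoref{eqn.SESDefiningE_e} splits, so $\sW_e \cong \sE_e \oplus \sO_X$. Since $\sO_X$ is globally generated and a finite direct sum of globally generated sheaves is globally generated, the assumed global generation of $\sE_e$ yields that of $\sW_e$.

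The forward implication carries the only real content. Assuming $\sW_e$ globally generated, I would first deduce that $X$ is $F$-split by checking that $H^0(\tau^e)$ is surjective. Fix a closed point $x\in X$ and run the naturality square for evaluation at $x$ applied to $\tau^e$: the top arrow $H^0(X,\sW_e)\to \sW_e\otimes\kay(x)$ is onto by global generation, the right arrow $\tau^e\otimes\kay(x)\colon \sW_e\otimes\kay(x)\to \sO_X\otimes\kay(x)=\kay(x)$ is onto because $\tau^e$ is a surjection of sheaves, and the bottom arrow $H^0(X,\sO_X)\to \sO_X\otimes\kay(x)$ is an isomorphism as $X$ is a proper variety over $\kay$. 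Commutativity then forces $H^0(\tau^e)$ to be surjective, so $X$ is $F$-split and \autoref{eqn.SESDefiningE_e} splits. The splitting exhibits $\sE_e$ as a direct summand of the globally generated sheaf $\sW_e$, and a direct summand of a globally generated sheaf is again globally generated; this gives the global generation of $\sE_e$ and finishes the argument.

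The main (and rather mild) obstacle is precisely that first step of the forward direction: converting the global-generation hypothesis on $\sW_e$ into surjectivity of $H^0(\tau^e)$, which is the $F$-splitting criterion above. Once that is in hand, everything reduces to the splitting-versus-$F$-split equivalence already established around \autoref{eqn.FrobeniusSES}; in particular this statement seems to need no positivity input beyond it.
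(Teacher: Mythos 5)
Your proof is correct and follows essentially the same route as the paper: the backward direction via the splitting $\sW_e\cong\sE_e\oplus\sO_X$, and the forward direction by upgrading global generation of $\sW_e$ to surjectivity of $H^0(\tau^e)$ (the paper does this by composing $\sO_X^{\oplus n}\twoheadrightarrow\sW_e\twoheadrightarrow\sO_X$ and noting one of the resulting scalars is nonzero, while you evaluate at a closed point --- the same observation in a slightly different dress), whence $X$ is $F$-split and $\sE_e$ is a direct summand of the globally generated $\sW_e$.
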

\begin{proof}
If $X$ is $F$-split, then $\sW_e \cong \sE_e \oplus \sO_X$ and so it is globally generated if (and only if) so is $\sE_e$. Conversely, suppose that $\sW_e$ is globally generated, then there are surjections $\sO_X^{\oplus n} \twoheadrightarrow \sW_e \twoheadrightarrow \sO_X$. We then have $n$ morphisms $\sO_X \to \sO_X$, which amounts to having $n$ global sections of $\sO_X$, i.e., $n$ elements of $\kay$. By surjectivity, at least one of these scalars must be nonzero. Thus, $H^0(X,\sW_e) \to H^0(X,\sO_X)$ is surjective. An element in $H^0(X,\sW_e)$ that is mapped to $1$ corresponds to a splitting of \autoref{eqn.SESDefiningE_e}. Therefore, $X$ is $F$-split and $\sE_e$ is globally generated.
\end{proof}

 \begin{definition}
Let $\sF$ be a locally free sheaf on a scheme $X$. One says that $\sF$ separates $l$-jets at a closed point $x\in X$ if the canonical restriction-of-sections map
\[
H^0(X,\sF) \to H^0\bigl(X,\sF \otimes \sO_X/\mathfrak{m}_x^{l+1}\bigr)
\]
is surjective, where $\m_x$ denotes the ideal sheaf defining $x$. Further, $\sF$ is said to separate $l$-jets if it separates $l$-jets at every closed point. Likewise, $\sF$ separates $q$-Frobenius $l$-jets at $x \in X$ if
\[
H^0\bigl(X,\sF\bigr) \to H^0\Bigl(X,\sF \otimes \sO_X \big/\bigl(\fram_x^{l+1}\bigr)^{[q]}\Bigr)
\]
is surjective. If this holds for all $x\in X$, one says that $\sF$ separates $q$-Frobenius $l$-jets.
\end{definition}

\begin{remark}
A locally free sheaf is globally generated if and only if it separates $0$-jets.
\end{remark}

\begin{lemma} \label{lem.l-jetsSeparationTranslation}
Let $X$ be an $F$-finite scheme and $x\in X$ be a closed point. An invertible sheaf $\sL$ on $X$ separates $q$-Frobenius $l$-jets at $x$ if and only if $F^e_* \sL$ separates $l$-jets at $x\in X$.
\end{lemma}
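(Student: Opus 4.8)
The plan is to show that the two restriction-of-sections maps appearing in the two separation conditions are, after natural identifications, \emph{literally the same map}, so that one is surjective precisely when the other is. The whole argument is local at $x$ and rests on two elementary facts about the $e$-th Frobenius $F^e$: first, that $F^e$ is a homeomorphism on underlying spaces (it is the identity there), so that $H^0(X, F^e_* \sG) = H^0(X, \sG)$ canonically and functorially for every $\sO_X$-module $\sG$; and second, a sheaf-level identification of the two jet quotients.

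The key step, which I would isolate, is the canonical isomorphism
\[
(F^e_* \sL) \otimes_{\sO_X} \sO_X/\fram_x^{l+1} \;\cong\; F^e_*\bigl(\sL \otimes_{\sO_X} \sO_X/(\fram_x^{l+1})^{[q]}\bigr).
\]
To prove it I would compute the image subsheaf $\fram_x^{l+1} \cdot (F^e_* \sL) \subseteq \sL$, where the product uses the $\sO_X$-module structure on $F^e_* \sL$ \emph{twisted by Frobenius}, so that a section $a$ of $\sO_X$ acts on a section $s$ of $\sL$ by $a \cdot s = a^q s$. Hence this image is the subsheaf generated by the elements $a^q s$ with $a \in \fram_x^{l+1}$, and a direct local check---reducing to $\sL \cong \sO_X$ and using that $(\sum c_i f_i)^q = \sum c_i^q f_i^q$ in characteristic $p$---identifies it with $(\fram_x^{l+1})^{[q]} \sL$, the honest product of the Frobenius bracket power of $\fram_x^{l+1}$ with $\sL$. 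Passing to cokernels gives the displayed isomorphism. This is exactly where the bracket power in the definition of $q$-Frobenius jets materializes, and it is the only real content of the proof.

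With this in hand the rest is bookkeeping. Applying $H^0$ and using $H^0 \circ F^e_* = H^0$ on both the source $F^e_* \sL$ and the quotient, I obtain canonical identifications
\[
H^0(X, F^e_* \sL) = H^0(X, \sL), \qquad H^0\bigl(X, (F^e_* \sL) \otimes \sO_X/\fram_x^{l+1}\bigr) = H^0\bigl(X, \sL \otimes \sO_X/(\fram_x^{l+1})^{[q]}\bigr).
\]
Since both restriction maps are induced by the respective quotient morphisms, and the displayed sheaf isomorphism is compatible with these quotients, the two restriction-of-sections maps coincide under the identifications. Therefore one is surjective if and only if the other is, which is precisely the claimed equivalence.

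The main obstacle is the local identification $\fram_x^{l+1} \cdot (F^e_* \sL) = (\fram_x^{l+1})^{[q]} \sL$: all the subtlety lies in keeping straight which module structure (the Frobenius-twisted one on $F^e_* \sL$ versus the honest one on $\sL$) is in play, and in verifying that the submodule generated by the $q$-th powers $a^q$ (with $a$ ranging over $\fram_x^{l+1}$) is exactly the ideal $(\fram_x^{l+1})^{[q]}$ and not merely the set of such powers---a point that is clean because we are in characteristic $p$. Once this is pinned down, everything else is formal.
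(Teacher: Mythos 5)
Your proof is correct and follows essentially the same route as the paper: both reduce the statement to the canonical identification $F^e_*\bigl(\sL \otimes \sO_X/(\fram_x^{l+1})^{[q]}\bigr) \cong (F^e_* \sL) \otimes \sO_X/\fram_x^{l+1}$ together with the exactness/identity of $H^0 \circ F^e_* = H^0$; you merely spell out the local verification of that identification (which the paper asserts as an instance of the projection formula). The only blemish is the typo $\fram_x^{l+1}\cdot(F^e_*\sL) \subseteq \sL$, which should read $\subseteq F^e_*\sL$.
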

\begin{proof}
By definition, $\sL$ separates $q$-Frobenius $l$-jets at $x$ if and only if the restriction map
\[
H^0\bigl(X,\sL\bigr) \to H^0\Bigl(X,\sL \otimes \sO_X \big/\bigl(\fram_x^{l+1}\bigr)^{[q]}\Bigr)
\]
is surjective. Nevertheless, the surjectivity of this map is equivalent to the surjectivity of
\[
H^0\bigl(X,F^e_*\sL \bigr) \to H^0\Bigl(X,F^e_* \Bigl( \sL \otimes \sO_X \big/\bigl(\fram_x^{l+1}\bigr)^{[q]} \Bigr)\Bigr).
\]
However,
\[
F^e_* \Bigl( \sL \otimes \sO_X \big/\bigl(\fram_x^{l+1}\bigr)^{[q]} \Bigr) = \bigl(F^e_* \sL\bigr) \otimes \sO_X / \mathfrak{m}_x^{l+1}.
\]
Therefore, $\sL$ separates $q$-Frobenius $l$-jets at $x$ if and only if the restriction map
\[
H^0\bigl(X, F^e_*\sL \bigr) \to H^0\Bigl(X,F^e_*\sL \otimes \sO_X \big/\fram_x^{l+1}\Bigr)
\]
is surjective, which means that $F^e_*\sL$ separates $l$-jets at $x$.
 \end{proof}
 
\begin{proposition} \label{prop.FanoByGlobalGeneration}
Working in \autoref{setup.Positivity}, if $\sE_{e}$ is globally generated and $X$ is $F$-split then $X$ is Fano.
\end{proposition}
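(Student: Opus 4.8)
The plan is to reduce everything to a statement about separation of jets and then feed that into a Seshadri-type criterion for ampleness, leaning on the machinery of \cite{MurayamaFrobeniusSeshadri,MurayamaPhDThesis}. First I would record that the two hypotheses combine, via \autoref{lem.GlobalGenerationOfsF_e}, into the single assertion that $\sW_e = F^e_* \omega_X^{1-q}$ is globally generated; equivalently, by the remark identifying global generation with $0$-jet separation, that $\sW_e$ separates $0$-jets at every closed point $x \in X$.

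Next I would transport this across the Frobenius. Applying \autoref{lem.l-jetsSeparationTranslation} to the invertible sheaf $\sL = \omega_X^{1-q}$ with $l=0$, the global generation of $\sW_e = F^e_* \omega_X^{1-q}$ is equivalent to $\omega_X^{1-q}$ separating $q$-Frobenius $0$-jets at every $x$, i.e. the surjectivity of $H^0(X,\omega_X^{1-q}) \to H^0(X,\omega_X^{1-q}\otimes \sO_X/\fram_x^{[q]})$ for all $x$. The elementary containment $\fram_x^{[q]} \subseteq \fram_x^{q}$ then produces a surjection onto $\omega_X^{1-q}\otimes \sO_X/\fram_x^{q}$, so that $\omega_X^{1-q}$ in fact separates ordinary $(q-1)$-jets at every closed point.

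The heart of the argument is to convert this uniform jet separation into positivity of $\omega_X^{-1}$. Writing $\omega_X^{1-q} = (\omega_X^{-1})^{\otimes(q-1)}$, the fact that the $(q-1)$-st power of $\omega_X^{-1}$ separates $(q-1)$-jets at every $x$ gives, through the standard comparison between jet separation and Seshadri constants (superadditivity of jet orders under tensor products, whence $\varepsilon(\sL;x) = \sup_m s(\sL^{\otimes m};x)/m$), the bound $\varepsilon(\omega_X^{-1};x) \geq (q-1)/(q-1) = 1 > 0$ at every closed point $x$; note that a single value of $e$ already suffices. Seshadri's criterion for ampleness, valid over our algebraically closed field, then forces $\omega_X^{-1}$ to be ample, i.e. $X$ to be Fano.

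The main obstacle I anticipate is the validity of the jet-separation/Seshadri comparison in positive characteristic together with its precise bookkeeping: one must know that separation of $s$-jets is superadditive under tensoring (clear, by multiplying sections), that the induced lower bound $\varepsilon(\sL;x)\ge s(\sL^{\otimes m};x)/m$ is characteristic-free, and that $\varepsilon(\omega_X^{-1};x)>0$ at every $x$ genuinely yields ampleness. This is exactly where I would invoke \cite{MurayamaFrobeniusSeshadri,MurayamaPhDThesis}: rather than passing through the crude containment $\fram_x^{[q]}\subseteq\fram_x^{q}$, one can work directly with the Frobenius–Seshadri constant attached to $q$-Frobenius jet separation and deduce its positivity, hence the ampleness of $\omega_X^{-1}$, in a single step.
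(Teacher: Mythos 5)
Your proposal is correct and follows essentially the same route as the paper: combine the hypotheses via \autoref{lem.GlobalGenerationOfsF_e}, translate to $q$-Frobenius $0$-jet separation of $\omega_X^{1-q}$ via \autoref{lem.l-jetsSeparationTranslation}, deduce $\varepsilon^0_F(\omega_X^{-1};x)\geq 1$ and hence $\varepsilon(\omega_X^{-1};x)\geq 1$ from \cite{MurayamaFrobeniusSeshadri}, and conclude ampleness from \cite{MurayamaPhDThesis}. The detour through $\fram_x^{[q]}\subseteq\fram_x^{q}$ and ordinary $(q-1)$-jets is sound but, as you note yourself, unnecessary, and the circularity worry you flag (that the jet-separation/Seshadri comparison and the ampleness criterion must not presuppose ampleness of $\omega_X^{-1}$) is exactly the point the paper addresses by invoking the ``trivial'' inequality of \cite[Proposition 2.5 (ii)]{MurayamaFrobeniusSeshadri} and \cite[Corollary 7.2.7]{MurayamaPhDThesis}.
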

\begin{proof}
Note that $\omega_X^{1-q}$ separates $q$-Frobenius $0$-jets. Indeed, by \autoref{lem.l-jetsSeparationTranslation}, this means that $\sW_e$ separates $0$-jets, i.e., it is globally generated. However, this follows from \autoref{lem.GlobalGenerationOfsF_e}.  On the other hand, by \cite[Proposition 2.5 (ii)]{MurayamaFrobeniusSeshadri}:
\[
\varepsilon^l_F\bigl(\omega_X^{-1};x\bigr) \geq \sup_{m,e} \frac{q-1}{m/(l+1)}
\]
where the supremum traverses all $m,e$ such that $\omega_X^{-m}$ separates $q$-Frobenius $l$-jets at $x$. Notice that we are using the trivial inequality in \cite[Proposition 2.5 (ii)]{MurayamaFrobeniusSeshadri}; which does not require $X$ to be Fano. Therefore, 
\[
\varepsilon^0_F\bigl(\omega_X^{-1};x\bigr) \geq (q-1)/(q-1) = 1,
\]
for all points $x\in X$. Nonetheless, $\varepsilon\bigl(\omega_X^{-1};x\bigr) \geq \varepsilon^l_F\bigl(\omega_X^{-1};x\bigr)$ for all $l$ and all $x\in X$ (notice that $X$ is regular, and this inequality does not require $X$ to be Fano); see \cite[Proposition 2.9]{MurayamaFrobeniusSeshadri}. Hence, $\varepsilon\bigl(\omega_X^{-1};x\bigr) \geq 1$ for all points $x$. According to \cite[Corollary 7.2.7]{MurayamaPhDThesis}, this suffices to prove that $\omega_X^{-1}$ is ample, and so that $X$ is Fano.
\end{proof}

\begin{remark}
With notation as in \autoref{prop.FanoByGlobalGeneration}, let $\sL$ be an invertible sheaf on $X$. Since $(F^e_* \sL)^{\vee} \cong F_*^e(\sL^{-1} \otimes \omega_X^{1-q})$, we have that $(F^e_* \sL)^{\vee}$ is globally generated if and only if $\sL^{-1} \otimes \omega_X^{1-q}$ separates $q$-Frobenius $0$-jets. Therefore, the same argument as in \autoref{prop.FanoByGlobalGeneration} proves that if $(F^e_* \sL^{q-1})^{\vee}$ is globally generated then $\sL^{-1} \otimes \omega_X^{-1}$ is ample.
\end{remark}

\subsection{Ampleness and numerical effectiveness} 
We have the following result.

\begin{proposition} \label{thm.AmpleImpliesFano}
Working in \autoref{setup.Positivity}, if $\sE_e$ is nef then so is $\omega_X^{-1}$. Further, if $\sE_e$ is ample then $X$ is Fano.
\end{proposition}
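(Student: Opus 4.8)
The plan is to produce a surjective $\sO_X$-linear map $\phi_e \: F^{e,*}\sE_e \twoheadrightarrow \omega_X^{1-q}$ and then feed it into the standard positivity formalism for vector bundles. Once $\phi_e$ is available, both assertions are formal: the absolute Frobenius $F^e$ is finite and surjective, so $F^{e,*}\sE_e$ is nef (respectively ample) whenever $\sE_e$ is; a quotient line bundle of a nef (respectively ample) vector bundle is again nef (respectively ample); hence $\omega_X^{1-q} = (\omega_X^{-1})^{\otimes(q-1)}$ is nef (respectively ample); and since a line bundle is nef (respectively ample) exactly when some positive tensor power is, we conclude $\omega_X^{-1}$ is nef (respectively ample). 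In the ample case this is precisely the statement that $X$ is Fano.

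To construct $\phi_e$, I would apply the right-exact functor $F^{e,*}$ to the defining sequence \autoref{eqn.SESDefiningE_e}. Using $F^{e,*}\sO_X = \sO_X$, this yields an exact sequence
\[
F^{e,*}\sE_e \to F^{e,*}F^e_*\omega_X^{1-q} \xrightarrow{F^{e,*}\tau^e} \sO_X \to 0,
\]
so that the image of $F^{e,*}\sE_e$ inside $F^{e,*}F^e_*\omega_X^{1-q}$ is exactly $\ker(F^{e,*}\tau^e)$. I then post-compose with the counit of adjunction $\mathrm{ev}\: F^{e,*}F^e_*\omega_X^{1-q} \to \omega_X^{1-q}$, which on local sections sends $(F^e_* m)\otimes a \mapsto ma$. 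The resulting composite is $\phi_e$.

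The crux is surjectivity of $\phi_e$, which — since the target is a line bundle — may be checked on stalks. Fixing a local coordinate system $t_1,\ldots,t_d$, trivialize $\omega_X$ by $\nu = \mathrm{d}t_1\wedge\cdots\wedge\mathrm{d}t_d$, and hence $\omega_X^{1-q}$ by $\nu^{1-q}$. By the explicit description of the Cartier operator in \autoref{rem.LocalDescritptionTraces}, one has $\kappa^e(F^e_*\nu) = \Phi^e(F^e_*1)\,\nu = 0$, whence $\tau^e(F^e_*\nu^{1-q}) = 0$; that is, the local section $F^e_*\nu^{1-q}$ already lies in $\ker\tau^e = \sE_e$. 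On the other hand the counit sends $(F^e_*\nu^{1-q})\otimes 1$ to the generator $\nu^{1-q}$ of $\omega_X^{1-q}$. Thus the image of $\phi_e$ contains a local generator at every point, so $\phi_e$ is surjective, and its kernel is automatically locally free as the target is a line bundle.

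The main obstacle is exactly this surjectivity verification. Right-exactness of $F^{e,*}$ only guarantees that $F^{e,*}\sE_e$ surjects onto $\ker(F^{e,*}\tau^e)$ inside $F^{e,*}F^e_*\omega_X^{1-q}$, so one must confirm that the counit does not collapse this subsheaf — equivalently, that the distinguished basis element $F^e_*\nu^{1-q}$, which the counit carries to a generator, genuinely belongs to $\sE_e$. This is where the concrete structure of the Frobenius trace enters, through the vanishing $\Phi^e(F^e_*1) = 0$; everything downstream is soft positivity bookkeeping.
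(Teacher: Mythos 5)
Your proposal is correct and is essentially the paper's own argument: the key point in both is the surjectivity of the composite $F^{e,*}\sE_e \to F^{e,*}F^e_*\omega_X^{1-q} \to \omega_X^{1-q}$, verified on stalks by observing that the distinguished local section $F^e_*\nu^{1-q}$ (equivalently $1^{1/q}$ after trivializing $\omega_X$) lies in $\sE_e$ because $\Phi^e(F^e_*1)=0$, yet maps to a local generator under the counit. The only cosmetic difference is that you run the nef and ample cases through this single surjection, whereas the paper first disposes of the nef case by a softer extension-plus-quotient argument and invokes the surjectivity only for ampleness; your streamlining is harmless.
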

\begin{proof}
Pulling back \autoref{eqn.SESDefiningE_e} along $F^e$ yields a short exact sequence
\[
0 \to F^{e,*}\sE_e \to F^{e,*}\sW_e \to \sO_X \to 0.
\]
Since $F^e$ is finite, $F^{e,*}\sE_e$ is nef (resp. ample) if so is $\sE_e$. Thus, if $\sE_e$ is nef, $F^{e,*}\sW_e$ is an extension of nef locally free sheaves and so it is nef as well \cite[Lemma 6.2.12 (i)]{LazarsfeldPositivity2}. Thus, the canonical morphism $F^{e,*}F^e_* \omega_X^{1-q} \to \omega_X^{1-q}$ realizes $\omega_X^{1-q}$ as a quotient of a nef locally free sheaf and hence $\omega_X^{1-q}$ is nef \cite[6.1.2 (i)]{LazarsfeldPositivity2}. Hence, $\omega_X^{-1}$ is nef (for one of its powers is nef).

The above argument fails in showing that the ampleness of $\sE_e$ is inherited by $\omega_X^{-1}$ because $\sO_X$ is not ample. To bypass this, we prove that the composition
\begin{equation} \label{eqn.KeySurjectiveComposition}
F^{e,*}\sE_e \to F^{e,*}\sW_e \to \omega_X^{1-q}
\end{equation}
is surjective. Consequently, if $\sE_e$ is a ample, a power of $\omega_X^{-1}$ would be realized as the quotient of an ample locally free sheaf and so $\omega_X^{-1}$ would be ample. 

In order to prove that \autoref{eqn.KeySurjectiveComposition} is surjective, we may restrict to stalks. Let $x\in X$ be a point. Twisting \autoref{eqn.SESDefiningE_e} by $\sO_{X,x}$ yields the following short exact sequence of $\sO_{X,x}$-modules
\[
0 \to \sE_{e,x} \to F^e_* \sO_{X,x} \xrightarrow{\kappa^e_x} \sO_{X,x} \to 0
\]
where $\kappa^e_x \: F^e_* \sO_{X,x} \to \sO_{X,x}$ is the Cartier operator associated to the local regular (and so Gorenstein) ring $\sO_{X,x}$; see \autoref{rem.LocalDescritptionTraces}. For notation ease, let us write $\sO_{X,x}^{1/q}$ instead of $F^e_* \sO_{X,x}$. Thus, pulling back along Frobenius gives the following short exact sequence
\[
0 \to \sO_{X,x}^{1/q} \otimes \sE_{e,x} \to \sO_{X,x}^{1/q} \otimes \sO_{X,x}^{1/q} \xrightarrow{\sO_{X,x}^{1/q} \otimes \kappa_x^e} \sO_{X,x}^{1/q} \to 0.
\]
On the other hand, the localization of $F^{e,*}\sW_e \to \omega_X^{1-q}$ at $x$ corresponds to the diagonal homomorphism $\delta \: \sO_{X,x}^{1/q} \otimes \sO_{X,x}^{1/q} \to \sO_{X,x}^{1/q}$ realizing $\sO_{X,x}^{1/q}$ as an $\sO_{X,x}$-algebra. Therefore, it suffices to prove that the composition
\[
\sO_{X,x}^{1/q} \otimes \sE_{e,x} \to \sO_{X,x}^{1/q} \otimes \sO_{X,x}^{1/q} \xrightarrow{\delta} \sO_{X,x}^{1/q}
\]
is surjective. By $\sO_{X,x}^{1/q}$-linearity, it suffices to show that $1=1^{1/q} \in \sO_{X,x}^{1/q}$ belongs to the image. Note that $1^{1/q} \in \sO_{X,x}^{1/q}$ belongs to $\sE_{e,x}$ as $\kappa^e_x(1^{1/q})=0$; see \autoref{rem.LocalDescritptionTraces}. Then, the image of $1^{1/q} \otimes 1^{1/q} \in \sO_{X,x}^{1/q} \otimes \sE_{e,x}$ is $\delta(1^{1/q} \otimes 1^{1/q}) = 1^{1/q} \in \sO_{X,x}^{1/q}$; as desired. 
\end{proof}

\begin{scholium}
Work in the setup of \autoref{thm.AmpleImpliesFano}. Let $\sP$ be a (positivity) property on locally free sheaves that can be induced via quotients and symmetric powers and is preserved under finite pullbacks. If $\sE_e$ satisfies $\sP$ then so does $\omega_X^{-1}$.
\end{scholium}
\begin{proof}
In the proof of \autoref{thm.AmpleImpliesFano}, we showed that there is a surjective morphism $F^{e,*}\sE_{e} \to \omega_X^{1-q}$. Hence, if $\sE_e$ satisfies $\sP$ then so does $F^{e,*}\sE_{e}$ by preservation under finite pullback. Then $\omega_X^{1-q}$ satisfies $\sP$ by induction via quotients and so does $\omega_X^{-1}$ via induction by powers.
\end{proof}

\begin{corollary} \label{cor.MainCorDim1}
Work in \autoref{setup.Positivity} with $d=1$. Then $\sE_e$ is ample if and only if $X \cong \bP^1$.
\end{corollary}

\begin{remark} \label{rem.NaiveAmplenessForOneiMPLIESfORALL}
In \autoref{rem.SequenceOfQuotientsOfE_e}, we had mentioned that it is unclear that $\sE_e$ being ample (or, say, nef) for some $e$ implies that it is for all $e \in \bN$. One may wonder whether the quotient map \autoref{eqn.KeySurjectiveComposition} may help to elucidate this. Combining it with \autoref{eqn.SESVariationOFEwrte} and using the projection formula yields the exact sequence
\[
\sE_d \otimes F^d_* \sE_e \to \sE_{d+e} \to \sE_{d} \to 0
\]
However, due to the pushforward $F^d_*$, it is unclear whether $\sE_{e+d}$ is ample if so are $\sE_e$ and $\sE_d$. In fact, it is not true in general that $F^d_*\sE_e$ nor $\sE_d \otimes F^d_* \sE_e$ are ample if so are $\sE_e$ and $\sE_d$. For instance, for $X = \bP^1$, we have that $\sE_e = \sO(1)^{\oplus (q-1)}$ but $F^d_* \sO(1) = \sO^{\oplus 2} \oplus \sO(-1)^{\oplus(p^d-2)}$. However, one can still ask:
\end{remark}

\begin{question} \label{que.AmplenessForSomeImplesAll}
    Suppose that $\sE_1$ is ample (resp. nef). Is it true that $F_*(\sE_1 \otimes \omega^{1-p})$ is ample (resp. nef)?
\end{question}

\subsection{Extremal contractions} \label{Main.SubsectionFibrationsContractions}
In studying when $\sE_e$ is ample, \autoref{thm.AmpleImpliesFano} let us restrict ourselves to Fano varieties. To narrow this down further, we investigate the conditions that the ampleness of $\sE_{e,X}$ imposes on extremal contractions of $X$. We start off with a general remark for smooth fibrations. By a \emph{fibration}, we mean a proper morphism $f\: X \to S$ with connected fibers (i.e., $f^{\#} \:\sO_X \to f_* \sO_X$ is an isomorphism).

\begin{proposition} \label{lem.Fibrations}
Let $f\: X \to S$ be a fibration between smooth varieties whose general fiber is smooth and fix $0 \neq e \in \bN$. If $\sE_{e,X}$ is ample and $\dim S >0$ then the general fiber of $f$ is zero-dimensional. In particular, all fibers are zero-dimensional if $f$ is further flat. 
\end{proposition}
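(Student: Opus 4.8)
The plan is to combine the naturality surjection of \autoref{rem.NatCarOp} with two elementary facts of positivity theory: quotient bundles of ample bundles are ample, and the trivial bundle on a positive-dimensional projective variety is never ample. The whole argument will take place after restricting to the smooth locus of $f$, where \autoref{rem.NatCarOp} is available.

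First I would pass to the smooth locus of $f$. By generic flatness together with the hypothesis that the general fiber is smooth, there is a dense open $U \subseteq S$ such that $f_U \: X_U \coloneqq f^{-1}(U) \to U$ is a smooth proper morphism between smooth varieties, so \autoref{rem.NatCarOp} applies to it. This yields a surjection of locally free sheaves
\[
\sE_{e,X}\big|_{X_U} \twoheadrightarrow f_U^* \sE_{e,U},
\]
where, since the formation of $\sE_e$ commutes with restriction to opens, the left-hand side is $\sE_{e,X_U}$, and where $\sE_{e,U} = \sE_{e,S}\big|_U$ is locally free of rank $q^{\dim S}-1$.

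Next I would restrict along a general fiber. Fix a closed point $s \in U$ and set $X_s = f^{-1}(s)$, a smooth projective variety. Restricting the surjection above to the closed subvariety $X_s \subseteq X_U$ produces a surjection
\[
\sE_{e,X}\big|_{X_s} \twoheadrightarrow \bigl(f^* \sE_{e,S}\bigr)\big|_{X_s} \cong \sO_{X_s}^{\oplus (q^{\dim S}-1)},
\]
the last isomorphism holding because the restriction to the fiber $X_s$ of a sheaf pulled back from $S$ is trivial. Since $\sE_{e,X}$ is ample, so is its restriction to the closed subvariety $X_s$; because quotient bundles of ample bundles are ample, the trivial bundle $\sO_{X_s}^{\oplus (q^{\dim S}-1)}$ would then be ample. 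As $\dim S>0$ forces $q^{\dim S}-1 \geq 1$, the trivial line bundle $\sO_{X_s}$ would be a further ample quotient, which is impossible once $X_s$ contains a curve. Hence $X_s$ must be zero-dimensional, so the general fiber, of dimension $\dim X - \dim S$, is zero-dimensional. For the final assertion, if $f$ is moreover flat then all its nonempty fibers are equidimensional of dimension $\dim X - \dim S = 0$, whence every fiber is zero-dimensional.

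The one genuinely delicate step is the reduction to a smooth proper $f_U$ so that \autoref{rem.NatCarOp} may be invoked: rather than appealing to generic smoothness, which fails in positive characteristic, I would combine generic flatness with the \emph{given} smoothness of the general fiber to produce the open $U \subseteq S$ over which $f$ is an honest smooth morphism. Everything after that is formal manipulation of the surjection and of ampleness.
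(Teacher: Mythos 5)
Your proposal is correct and follows essentially the same route as the paper: reduce to a smooth proper morphism $f_U \: X_U \to U$ via generic flatness and the assumed smoothness of the general fiber, apply the naturality surjection $\sE_{e,X_U} \twoheadrightarrow f_U^* \sE_{e,U}$ of \autoref{rem.NatCarOp}, restrict to a fiber $X_s$ to obtain a trivial quotient $\sO_{X_s}^{\oplus (q^{\dim S}-1)}$ of an ample bundle, and conclude $\dim X_s = 0$. The only (immaterial) difference is that you spell out the final flatness/equidimensionality step and the non-ampleness of the trivial bundle in slightly more detail than the paper does.
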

\begin{proof}
There is an open $\emptyset \neq U \subset S$ such that the restriction $f_U \: X_U \to U$ is a smooth fibration between smooth varieties (using generic flatness, openess of the regular locus of $S$, and the given hypothesis of smoothness of the general fiber). By \autoref{rem.NatCarOp}, there is a surjective morphism $\varepsilon_{e,X_U/U}:\sE_{e,X_U} \to f_U^* \sE_{e,U}$. Its pullback along a fiber $g\:X_s \to X$ at a closed point $s \in U(\kay)$ (so $X_s \subset X_U$) will be a surjection of the form 
\[
g^* \sE_{e,X } \to \sO_{X_s}^{\oplus (q^{\dim S} -1)}.
\] Therefore, if $\sE_{e,X}$ is ample then so is $\sO_{X_s}^{\oplus (q^{\dim S} -1)}$. Hence, $\dim X_s=0$ as $\dim S >0$.
\end{proof}

We had seen above (see \autoref{cor.(-1)CurvesRuleOut}) that if $\sE_{e,X}$ is ample for a surface $X$, then $X$ contains no $(-1)$-curve. We then obtain the following.

\begin{corollary} \label{cor.MainCorDim2}
Work in \autoref{setup.Positivity} with $d=2$. Then $\sE_e$ is ample if and only if $X \cong \bP^2$.
\end{corollary}
\begin{proof}
Suppose that $\sE_{e,X}$ is ample. By \autoref{cor.(-1)CurvesRuleOut}, $X$ contains no $(-1)$-curve. Therefore, any extremal contraction $X \to S$ is a Mori fibration. More precisely, $f\:X \to C$ is either a ruled surface or $X \cong \bP^2$. We rule out the ruled surface case by using \autoref{lem.Fibrations}.\footnote{Note that we have done this explicitly in \autoref{ex.HirzebruchSurfaces}.}
\end{proof}

With the above proof of \autoref{cor.MainCorDim2} in place, we see how to proceed for threefolds. Fortunately, we have a good description of extremal contractions on smooth threefolds. We recall the following fundamental result, which was originally due to S.~Mori in characteristic zero in his seminal work \cite{MoriThreefoldsWhoseCanonicalBundleNotNef} and later generalized to all characteristics by J.~Koll\'ar; see \cite[Main Theorem]{KollarExtremalRaysOnSmoothThreefolds}.

\begin{theorem}[Koll\'ar--Mori's description of smooth threefold extremal contractions] \label{thm.StructureExtremalContractionsDim3}
Let $X$ be a smooth threefold and $f\: X \to S$ be an extremal contraction. If $f$ is birational then it is one of the following divisorial contractions with exceptional divisor $E \subset X$:
\begin{enumerate}
    \item $S$ is smooth and $f$ is the blowup along a smooth curve 
    $C \subset S$. In this case, $f_C \:E \to C$ is a smooth minimal ruled surface.
    \item $S$ is smooth and $f$ is the blowup at a point $s\in S$. In this case, $E \cong \bP^2$ with normal bundle corresponding to $\sO_{\bP^2}(-1)$.
    \item $S$ has exactly one singular point $s \in S$ and $f$ is the blowup of $S$ at $s$. Moreover, one of the following three cases holds:
    \begin{enumerate}
    \item  $\hat{\sO}_{S,s} \cong \kay \llbracket x,y,z \rrbracket^{\bZ/2} \cong \kay \llbracket x^2,y^2,z^2,xy,yz,zx \rrbracket \eqqcolon R_1$, where $\bZ/2$ acts via the involution $(x,y,z) \mapsto (-x,-y,-z)$, and $E \cong \bP^2$ with normal bundle $\sO_{\bP^2}(-2)$.
    \item  $\hat{\sO}_{S,s} \cong \kay \llbracket x,y,z,t \rrbracket/(xy-z^2-t^3) \eqqcolon R_2$ and $E$ is isomorphic to the singular quadric cone $Q \subset \bP^3$  with normal bundle corresponding to $\sO_Q(-1)$.
     \item $\hat{\sO}_{S,s} \cong \kay \llbracket x,y,z,t \rrbracket/(xy-zt) \eqqcolon R_3$ and $E\cong \bQ^2 \subset \bP^3$ with normal bundle corresponding to $\sO_{\bQ^2}(-1)$.
    \end{enumerate}
\end{enumerate}
If $f$ is not birational then it corresponds to one of the following Fano fibrations:
\begin{enumerate}[label=(\arabic*)]
    \item $S$ is a smooth surface and $f \: X \to S$ is a flat conic bundle (i.e. every fiber is isomorphic to a conic in $\mathbb{P}^2$). If $p \neq 2$, the general fiber of $f$ is smooth.
    \item $S$ is a smooth curve and every fiber of $f \: X \to S$ is irreducible and every reduced fiber is a (possibly nonnormal) del Pezzo surface. However, the general fiber is a normal del Pezzo surface \cite{FanelliSchroerDelPezzoSurfacesAndMoriFiberSpacesInPosChar}.
    Further, if $p> 7$, the general fiber of $f$ is a smooth del Pezzo surface \cite{PatakfalviWaldronSingularitiesOfGeneralFibersLMMP}. Noteworthy, $f$ is necessarily flat; see \cite[III, Proposition 9.7]{Hartshorne}.\footnote{If $p=2$, the generic fiber need not be smooth \cite{FanelliSchroerDelPezzoSurfacesAndMoriFiberSpacesInPosChar}. We do not know of examples if $p=3,5,7$.}
    \item $S$ is a point and so $X$ is a Fano variety of Picard rank $1$.
\end{enumerate}
\end{theorem}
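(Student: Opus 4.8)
The plan is to follow the architecture of the minimal model program, due to Mori in characteristic zero and extended by Koll\'ar to all characteristics, so strictly speaking one only cites \cite{MoriThreefoldsWhoseCanonicalBundleNotNef, KollarExtremalRaysOnSmoothThreefolds}; but I sketch how the proof is organized. The point of departure is the Cone and Contraction Theorems, which present any extremal contraction $f \: X \to S$ as the contraction $\mathrm{cont}_R$ of a single $K_X$-negative extremal ray $R \subset \overline{NE}(X)$. The governing invariant is the \emph{length} $\ell(R) = \min\{-K_X \cdot C\}$, the minimum over rational curves $C$ spanning $R$; its existence rests on Mori's bend-and-break production of rational curves on a threefold with $K_X$ not nef.

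I would organize the classification around the fiber-dimension estimate of Ionescu--Wi\'{s}niewski type: for any positive-dimensional fiber $F$ of $f$ with exceptional locus $E$, one has $\dim E + \dim F \geq \dim X + \ell(R) - 1$. On a smooth threefold this already forbids small contractions: a small contraction has exceptional locus of dimension $\leq 1$, so $\dim E + \dim F \leq 2$, contradicting the estimate whose right-hand side $\dim X + \ell(R) - 1 \geq 3$. Hence every birational extremal contraction is divisorial with a prime exceptional divisor $E$, and the classification splits into the cases $\dim f(E) = 1$, $\dim f(E) = 0$, and --- for fiber-type contractions --- $\dim S = 2, 1, 0$.

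The core of each case is a local study of $f$ near $E$ (resp. near a general fiber) via adjunction, computing $-K_X|_E$ and the normal bundle $N_{E/X}$. When $f(E)$ is a curve $C$, one shows $E \to C$ is a minimal ruled surface and $f$ is the blowup of $S$ along the smooth curve $C$. When $f(E)$ is a point, the $f$-ampleness of $-K_X$ makes $-K_X|_E$ ample and, through adjunction, forces $E$ to be a del Pezzo surface of Picard number one --- hence $\bP^2$ or a quadric --- while $N_{E/X}$ determines the singularity induced on $S$, yielding the smooth point, the $\bZ/2$-quotient, and the two quadric-cone cases. For fiber-type contractions the general fiber is Fano of the appropriate dimension: a conic for $\dim S = 2$, a del Pezzo surface for $\dim S = 1$, and $X$ itself when $S$ is a point.

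The main obstacle, and the reason Koll\'ar's extension is a substantial theorem rather than a transcription of Mori's, is that Mori's arguments rely on Kodaira-type vanishing, which fails in positive characteristic. Replacing those inputs demands a direct analysis of rational curves and of the cohomology along fibers, and it is exactly here that the genuinely positive-characteristic phenomena intrude: the non-reduced or nonnormal del Pezzo fibers in small characteristic and the non-smooth conic bundles when $p = 2$. Taming these degenerations, rather than the birational classification itself, is where essentially all the difficulty concentrates.
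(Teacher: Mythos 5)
The paper offers no proof of this statement: it is recalled verbatim from the literature, attributed to Mori in characteristic zero \cite{MoriThreefoldsWhoseCanonicalBundleNotNef} and to Koll\'ar in arbitrary characteristic \cite[Main Theorem]{KollarExtremalRaysOnSmoothThreefolds}, which is exactly the citation you give. Your accompanying sketch of the length-of-ray and fiber-dimension architecture is a reasonable summary of those sources, but since the paper itself only cites them, the comparison reduces to noting that your references match the paper's.
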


In this way, the ampleness of $\sE_{e,X}$ rules out most possible extremal contractions that $X$ can undergo:

\begin{proposition} \label{prop.RulingOutCases}
With notation as in \autoref{thm.StructureExtremalContractionsDim3}, suppose that $\sE_{e,X}$ is ample (for some $0\neq e \in \bN$) but the Picard rank $\rho(X) \geq 2$. Then $f$ is either as in case ii. or a wild del Pezzo fibration (so $p \leq 7$); i.e., as in case (2) where the geometric generic fiber (although normal) is not smooth.
\end{proposition}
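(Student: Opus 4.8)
The plan is to run the minimal model program and walk through the Koll\'ar--Mori list. Since $\sE_{e,X}$ is ample, \autoref{thm.AmpleImpliesFano} gives that $X$ is Fano, so the assumption $\rho(X)\geq 2$ produces an extremal contraction $f\:X\to S$ whose target is not a point (case (3) of \autoref{thm.StructureExtremalContractionsDim3} being precisely $\rho(X)=1$). The whole argument rests on a single principle, already exploited for surfaces in \autoref{cor.(-1)CurvesRuleOut}: ampleness of $\sE_{e,X}$ is inherited by every restriction $\sE_{e,X}|_Z$ to a closed subvariety $Z\subseteq X$, and by the dual sequence \autoref{eqn.FrobeniusSES} it therefore suffices, in order to exclude a case, to exhibit a $Z$ on which $(F^e_*\sO_X)|_Z$ carries $\sO_Z$ as a direct summand. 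Then $\sE_{e,X}^\vee|_Z$ acquires a trivial summand, so $\sE_{e,X}|_Z$ is not ample; this is exactly the mechanism by which non-ampleness is concluded throughout \autoref{sec.Examples}.

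For the Fano fibrations I would argue uniformly through \autoref{lem.Fibrations}. If $f$ is a conic bundle (case (1)) or a del Pezzo fibration (case (2)) and its general fiber is smooth, then $\dim S>0$ forces that general fiber to be zero-dimensional, which is absurd since the fibers have dimension $1$, respectively $2$. This eliminates every conic bundle with $p\neq 2$ and every del Pezzo fibration with smooth general fiber, in particular all of case (2) once $p>7$ by \cite{PatakfalviWaldronSingularitiesOfGeneralFibersLMMP}. What remains among the fibrations is exactly a del Pezzo fibration whose geometric generic fiber is normal but not smooth, i.e. a wild del Pezzo fibration, which forces $p\leq 7$; this is one of the two survivors.

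The birational contractions I would split according to whether $S$ is smooth. If $S$ is smooth, then $f$ blows up a smooth curve (case i.) or a smooth point (case ii.), i.e. a smooth center of codimension $\geq 2$, so \autoref{lem.FrobPushForwardLocalgeneralBlowups} exhibits a trivial summand in $(F^e_*\sO_X)|_E$; this excludes case i., while the blow-up of a smooth point (case ii.) is the single smooth divisorial contraction kept in the statement, to be disposed of by the same lemma when assembling the Main Theorem. If $S$ is singular (cases iii.(a)--(c)), $f$ is again the blow-up of a point, and following \autoref{rem.DirecCumbersoneStrategy} I would replace the formal neighborhood of $E$ by a global projective cone carrying the same singularity, where $(F^e_*\sO_X)|_E$ has already been computed: the $\tfrac{1}{2}(1,1,1)$ quotient $R_1$ with $E\cong\bP^2$ is the vertex blow-up of the quadratic Veronese cone of \autoref{sec.VeroneseEmbeddings} (with $d=2$, $\varepsilon=2$), and the node $R_3$ with $E\cong\bQ^2$ is the vertex blow-up of the Segre cone of \autoref{sec.SegreEmbeddings} (with $r=s=1$); in both, inspection of the $k=0$ (resp. $k=l=0$) summand shows $\sO_E$ occurs in $(F^e_*\sO_X)|_E$, so $\sE_{e,X}|_E$ is not ample. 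The compound singularity $R_2\colon xy=z^2+t^3$, whose exceptional divisor is the singular quadric cone $Q$, I would match to the analogous quadric-cone model and treat in the same spirit.

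I expect two genuinely positive-characteristic points to be the main obstacles. The first is the wild conic bundle in characteristic $2$: there the general fiber need not be smooth, so \autoref{lem.Fibrations} --- which through \autoref{rem.NatCarOp} needs $f$ smooth over a dense open --- does not apply, and one must instead restrict $\sE_{e,X}$ to a reduced fiber $\cong\bP^1$ and extract the obstruction by a hands-on relative-Frobenius computation; this is precisely what keeps conic bundles off the survivor list while leaving the wild del Pezzo fibration on it. The second is the contraction from $R_2$, whose exceptional $Q$ is not among the toric cones of \autoref{sec.VeroneseEmbeddings}--\autoref{sec.SegreEmbeddings}, so that controlling $(F^e_*\sO_X)|_Q$ --- by restricting to a ruling of $Q$, or by importing the quadric pushforward data of \cite{AchingerFrobeniusPushForwardQuadrics,LangerDAffinityFrobeniusQuadrics} --- needs extra care. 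Once both are settled, the only contractions compatible with the ampleness of $\sE_{e,X}$ are case ii. and the wild del Pezzo fibration, as asserted.
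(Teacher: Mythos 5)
Your overall architecture (fibration lemma for the tame fibrations, the smooth-blowup lemma for smooth centers, restriction to the exceptional divisor via global cone models for the singular point blowups) is the paper's, but the proposal has a case-identification error that changes the conclusion, plus one genuine gap. In \autoref{thm.StructureExtremalContractionsDim3} the surviving ``case ii.''\ is item (c).ii, i.e.\ the blowup of the singular point with $\hat{\sO}_{S,s}\cong R_2=\kay\llbracket x,y,z,t\rrbracket/(xy-z^2-t^3)$ and exceptional divisor the singular quadric cone $Q$ --- \emph{not} the blowup of a smooth point, which is case (b) and is killed outright by \autoref{lem.FrobPushForwardLocalgeneralBlowups} (a point is a smooth center of codimension $3$). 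You keep the smooth-point blowup as a survivor and instead try to dispose of $R_2$ ``in the same spirit'' as $R_1$ and $R_3$ by matching it to a quadric-cone model. That step fails: $xy=z^2+t^3$ is not a cone singularity (the projective cone over $Q$ would be $xy=z^2$), so there is no global cone on which the machinery of \autoref{sec.VeroneseEmbeddings}--\autoref{sec.SegreEmbeddings} computes $(F^e_*\sO_X)|_E$. This is exactly the case the authors cannot rule out --- it is left as a survivor of the proposition and is only eliminated later, indirectly, via \autoref{prop.NarrowingDownCasesFanoCase}. So your proposed proof would establish a different (and, as far as anyone knows, unproven) statement.

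The second gap is the wild conic bundle in characteristic $2$. You correctly note that \autoref{lem.Fibrations} does not apply, but the proposed fix --- restricting $\sE_{e,X}$ to a reduced fiber $\cong\bP^1$ and doing a ``hands-on relative-Frobenius computation'' --- is not carried out, and it is not clear it can be: the relative Frobenius of a wild conic bundle is badly behaved precisely along every fiber, and no trivial quotient of $\sE_{e,X}|_{\bP^1}$ is exhibited. The paper instead invokes the Mori--Saito classification of wild conic bundles on smooth Fano threefolds into two explicit families and rules out each by finding a \emph{different} structure on the same $X$ that is already forbidden: the $(1,2)$-divisor in $\bP^2\times\bP^2$ admits a smooth $\bP^1$-fibration (contradicting \autoref{lem.Fibrations}), and the second family is also the blowup of $\bQ^3$ along two disjoint smooth conics (contradicting \autoref{lem.FrobPushForwardLocalgeneralBlowups}). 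Your Veronese/Segre treatment of $R_1$ and $R_3$, and your use of the fibration lemma for the tame cases, do match the paper.
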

\begin{proof}
The tame (i.e. non-wild) instances of (1) and (2) are ruled out by \autoref{lem.Fibrations}. Next, we explain why there cannot be wild conic fibrations (which only happen if $p=2$). Suppose $p=2$ and that $X$ admits a wild conic fibration $f\: X \to S$. Fortunately, these have been classified in \cite[Corollary 8]{MoriSaitFanoThreefoldsWithWildConicBundleStructures}. There are two cases, which we show next to be impossible, yielding the sought contradiction. The cases are as follows.

\emph{First case:} $X \subset \bP^2 \times \bP^2$ is a divisor of bidegree $(1,2)$ and $f\: X \to S$ is the projection into the second factor $\bP^2$ (e.g. \cite[Example 4.12]{KollarExtremalRaysOnSmoothThreefolds}). However, the projection $g\: X \to \bP^2$ onto the first factor is a smooth $\bP^1$-fibration (see \cite[final case in \S2.3]{SaitoFanothreefoldsWithPicardNumber2PosChar}) whose existence violates \autoref{lem.Fibrations}.

\emph{Second case:} $f$ is given by $X \subset \bP(\sO(1,0) \oplus \sO(0,1) \oplus \sO) \to \bP^1 \times \bP^1$ where $X$ is a smooth prime divisor in the linear system  $|\sO_{\bP}(2)|$. However, by \cite[Remark 10]{MoriSaitFanoThreefoldsWithWildConicBundleStructures}, $X$ is also the blowup of the smooth quadric threefold $\bQ^3 \subset \bP^4$ along the union of two disjoint smooth conics $C_1, C_2 \subset \bQ^3$ (e.g. \cite[Example 5.3]{SaitoFanothreefoldsWithPicardNumber2PosChar}). Nonetheless, we know that these cannot exist either if $\sE_{e,X}$ is to be ample by \autoref{lem.FrobPushForwardLocalgeneralBlowups}. 

We see that cases (a) and (b) are impossible by applying \autoref{lem.FrobPushForwardLocalgeneralBlowups}---just as we did in the proof of \autoref{cor.(-1)CurvesRuleOut} (\cf proof of \autoref{cor.MainCorDim2}). Thus, we are left with ruling out cases i. and iii. Inspired by the previous two cases, our strategy will be to pullback $\sE_{e,X}$ to the exceptional divisor of the blowup arguing that such pullback is not ample. We do it by computing the restriction explicitly and showing it has a free direct summand. Since the argument is local around the singular point $s$, we may replace $S$ by any projective threefold realizing that singular point. Then, we compute $F^e_*\sO$ (and so $\sE_e$) for the blowup of that threefold at the singular point and subsequently its pullback to the exceptional divisor. We start off with case i. We may consider $S$ to be projective cone over the Veronese surface $\bP^2 \cong V\subset \bP^5$; see \autoref{sec.CONESEXAMPLES}. Then, if $X \to S$ is the blowup of $s$ at its vertex $s$, then $\sE_{e,X}$ is not ample and $\hat{\sO}_{S,s} \cong R_1$; see \autoref{sec.VeroneseEmbeddings}. Similarly, for case iii., we may consider $S$ to be projective cone over the Segre embedding $\bP^1 \times \bP^1 \cong \bQ^2 \subset \bP^3$. If $s \in S$ denotes the vertex singularity, then $\hat{\sO}_{S,s} \cong R_3$ and its blowup $X \to S$ is so that $\sE_{e,X}$ is not ample as demonstrated in \autoref{sec.SegreEmbeddings}.
\end{proof}

Unfortunately, the authors do not know how to rule out the remaining cases of \autoref{prop.RulingOutCases}. For example, case ii. is quite different from the other two cases of (c). To bypass this issue, we are going to take a closer look at the structure of extremal contractions of smooth Fano threefolds as pioneered by \cite{MoriMukaiClassificationFanoThreefoldsWithB2geq2I,MoriMukaiClassificationFanoThreefoldsWithB2geq2II,MoriMukaiClassificationFanoThreefoldsWithB2geq2III}, which were done in characteristic zero. For the positive characteristic case, see \cite{SaitoFanothreefoldsWithPicardNumber2PosChar,MoriSaitFanoThreefoldsWithWildConicBundleStructures}, \cf \cite{ShepherdBArronFanoThreefoldInPosChar,MegyesiFanoThreefoldInPosChar,TakeuchiSomeBirationalMapsOfFanoThreefolds}. Now, we need not the full strength of those analyses, as all we need is a result of the form \cite[Corollary 1.3]{WisniewskiOnContractionsOfExtremalRaysOfFanomanifolds} or say (much weaker versions of) \cite[Theorem 5]{MoriMukaiClassificationFanoThreefoldsWithB2geq2I}, \cite[Theorem 1.6]{MoriMukaiClassificationFanoThreefoldsWithB2geq2II}, or \cite{MoriMukaiClassificationFanoThreefoldsWithB2geq2III}. In this regard, we have the following. The ideas are those of Mori--Mukai in op. cit (so no originality is claimed). However, we provide a proof for the lack of an adequate reference in positive characteristics.

\begin{proposition}\label{prop.NarrowingDownCasesFanoCase}
Let $X$ be a smooth Fano threefold of Picard rank $\rho(X) \geq 2$. Then, $X$ admits an extremal contraction $f\: X \to S$ that is either as in case (a) or as in case (1) of \autoref{thm.StructureExtremalContractionsDim3}.
\end{proposition}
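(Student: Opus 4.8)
The plan is to argue by contradiction: assume $X$ admits \emph{no} contraction of type (a) (a blow-up along a smooth curve) and \emph{none} of type (1) (a conic bundle), and show this forces $\rho(X)=1$. Since $\rho(X)\ge 2$, the cone $\overline{NE}(X)$ is a polyhedral cone of dimension at least two (by the cone theorem underlying \autoref{thm.StructureExtremalContractionsDim3}), so $X$ carries at least two extremal rays, each giving a contraction on the list of \autoref{thm.StructureExtremalContractionsDim3}. Under the standing assumption none of these is a conic bundle (case (1)) nor a blow-up along a smooth curve (case (a)), and I will rule out every remaining possibility.

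First I would dispose of the fiber-type contractions. A del Pezzo fibration (case (2)) forces its base to be a curve, and being a Mori fiber space it has relative Picard rank one; hence $\rho(X)=2$ and $X$ has exactly one further extremal ray $R'$, while the fibration ray $R_0$ is spanned by \emph{all} fiber-contracted curves. If $R'$ were a second fibration over a curve, the two nef fiber classes $F_0,F'$ (a basis of $N^1(X)$, each of numerically trivial self-intersection) would annihilate a common curve in $F_0\cap F'$ against $-K_X$, contradicting ampleness. If $R'$ were a divisorial contraction to a point, its exceptional $\bP^2$ or quadric surface $E'$ would either lie inside a fiber---which is irreducible, so $E'$ would equal it and $R'=R_0$---or dominate the base, forcing $E'$ to be a quadric whose ruling is contracted both by $R'$ and by the fibration, i.e. a nonzero class in $R'\cap R_0=\{0\}$. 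Both are absurd, so $R'$ is of type (a) or (1), contrary to assumption. Thus $X$ has no fiber-type contraction at all, and since case (a) is excluded, \emph{every} extremal contraction of $X$ is divisorial to a point, i.e. of type (b), (c)(i), (c)(ii) or (c)(iii).

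The heart of the matter, and the main obstacle, is to show that a smooth Fano threefold of $\rho\ge 2$ cannot have all of its extremal contractions be divisorial-to-a-point. I would proceed by descending induction on $\rho$: contract one such ray $\phi\colon X\to Y$, so that $Y$ is a $\bQ$-factorial terminal Fano threefold with one singular point and $\rho(Y)=\rho(X)-1$. If $\rho(Y)\ge 2$, the singular analog of \autoref{thm.StructureExtremalContractionsDim3} furnishes a fiber-type or blow-up-along-a-curve contraction of $Y$, which lifts through $\phi$ to a fiber-type or divisor-to-curve extremal ray on $X$, contradicting the previous paragraph. In the base case $\rho(Y)=1$ one has $X=\Bl_y Y$ with $Y$ a terminal Fano threefold of Picard rank one, and here the explicit geometry of the second ray---projection away from $y$, or the family of minimal rational curves through $y$, as in the Sarkisov link $\Bl_p\bQ^3\cong\Bl_{\mathrm{conic}}\bP^3$---produces a conic bundle or a blow-up along a smooth curve, the final contradiction. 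This step is precisely where the Mori--Mukai analysis enters, via the positive-characteristic classifications of \cite{SaitoFanothreefoldsWithPicardNumber2PosChar,MoriSaitFanoThreefoldsWithWildConicBundleStructures} and the length estimates of \cite{WisniewskiOnContractionsOfExtremalRaysOfFanomanifolds}.

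I expect two genuine difficulties. First, the inductive step leaves the smooth category: the intermediate $Y$ are singular terminal Fano threefolds, so both the cone/contraction theorem for $Y$ and the lifting of contractions through $\phi$ require the minimal model program for terminal threefolds in positive characteristic, available only under mild restrictions on $p$; I would either carry these hypotheses along or restrict the reduction to the cases actually needed. Second, the base case rests on the geometry of rational curves through the blown-up point and on the linear systems effecting the second contraction, where the failure of Kodaira-type vanishing in characteristic $p$ makes base-point-freeness delicate, so I would lean on the cited characteristic-$p$ Fano threefold literature rather than reprove these structure results. For the application this is ample: it suffices to exhibit a single contraction lying outside the short list ``case (c)(ii) or wild del Pezzo fibration'' left open by \autoref{prop.RulingOutCases}, and producing one of type (a) or (1) does exactly that.
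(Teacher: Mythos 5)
Your endgame---deriving a contradiction once a del Pezzo fibration $f\:X \to \bP^1$ is present, by analyzing the second extremal ray---is essentially the paper's argument and is sound (the paper rules out a second fibration via $f\times g\: X \to \bP^1\times\bP^1$ rather than your fiber-class computation, and handles the $E'\cong\bQ^2$ case by noting the two rulings are numerically equivalent in $X$, but these are cosmetic differences). The genuine gap is in the step you yourself identify as ``the heart of the matter'': showing that not every extremal contraction of $X$ can be divisorial-to-a-point. You do not prove this; you propose a descending induction on $\rho$ through contractions $\phi\: X\to Y$, and that route has serious problems. First, $Y$ need not be $\bQ$-factorial: in case (c)iii. of \autoref{thm.StructureExtremalContractionsDim3} the target acquires an ordinary double point $xy=zt$, which is terminal but not $\bQ$-factorial, so the cone/contraction theorem you want to invoke on $Y$ is not available in the form you need. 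Second, the inductive step assumes a \emph{stronger} version of the proposition (for singular terminal Fano threefolds, in positive characteristic) than the one being proved, which is close to circular and is certainly not covered by the cited references. Third, lifting a contraction of $Y$ through $\phi$ only gives a contraction of a face of $\overline{NE}(X)$, not an extremal ray of the required type, and extracting one is not automatic. Your base case likewise defers entirely to ``explicit geometry'' and the literature.

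The paper closes this gap with a short, self-contained intersection-theoretic argument that avoids singular varieties altogether: if $R_1,\dots,R_m$ are the extremal rays whose contractions are divisorial-to-a-point, with exceptional divisors $E_1,\dots,E_m$, then the $E_i$ are pairwise disjoint (for $i\neq j$, a curve $L\subset E_i\cap E_j$ would satisfy $L\cdot E_i<0$ because $\sO_{E_i}(E_i)$ is negative, yet $L\cdot E_i\geq 0$ because curves in $E_j$ move inside $E_j$). Consequently $Z\cdot E_i\leq 0$ for every $Z$ in the subcone $\Delta$ spanned by these rays, whereas $(-K_X)^2\cdot E_i>0$ by ampleness of $-K_X$; hence $(-K_X)^2\notin\Delta$ and $\Delta\subsetneq\overline{NE}(X)$, so some extremal ray is of fiber type. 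With types (a), (1) and (3) excluded, that ray is a del Pezzo fibration, and your second paragraph then finishes the proof. You should replace your induction by this disjointness argument (or supply an equally elementary substitute); as written, the proposal does not establish the proposition.
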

\begin{proof}
Let $\Gamma$ denote the (closed) cone of curves of $X$, which is a finite polyhedral cone as $X$ is a smooth Fano threefold. Let $R_1, \ldots, R_n$ be the extremal rays of $\Gamma$ with corresponding extremal contractions $f_i \: X \to S_i$; see \cite[\S3.7]{KollarMori}. Suppose, for the sake of contradiction, that none of the $f_i$ is a smooth blowup (case (a)) nor a conic bundle (case (1)). 

Let $\Delta \subset \Gamma$ be the subcone spanned by those extremal rays that produce divisorial contractions (only of the types (b) and (c) by assumption). For notation ease, let us say that these are the first $m$ extremal rays (if any). Let $E_1, \ldots, E_m \subset X$ denote the corresponding exceptional divisors (if any). The first observation is that these divisors are pairwise disjoint. Indeed, let $L \coloneqq E_i \cap E_j$ for $i \neq j$. Then, on the one hand, $L \cdot E_i =  L \cdot E_i|_{E_i} <0$ as $\sO_{E_i}(E_i)$ is always negative (according to \autoref{thm.StructureExtremalContractionsDim3}). On the other hand, $L \cdot E_i \geq 0$ as curves in $E_j$ move (see the options in \autoref{thm.StructureExtremalContractionsDim3}). Then, one readily sees that $Z \cdot E_i \leq 0$ for all $Z \in \Delta$. In particular, $(-K_X)^2 \notin \Delta$ and so $\Delta \neq \Gamma$ (i.e., $n >m$). 

In conclusion, $f\coloneqq f_n \: X \to S_n \eqqcolon S$ must be a del Pezzo fibration (with normal general fiber). Since $H^1(X,\sO_X)=0$ (\cite[Corollary 3.7]{KawakamiKVVForFano3FoldsPosChar}, \cite[Corollary 1.5]{ShepherdBArronFanoThreefoldInPosChar}) and $f_* \sO_X = \sO_S$, then $H^1(S,\sO_S) = 0$ (as the Leray spectral sequence yields $H^1(S,\sO_S) \subset H^1(X,\sO_X)$). In particular, $S = \bP^1$ and so $\rho(X) = \rho(S)+1=2$. Let $g \: X \to S$ be the other extremal contraction. By assumption, it is either another del Pezzo fibration or a blowup at a point. If it were another del Pezzo fibration, then it would give a surjective map $f \times g \: X \to \bP^1 \times \bP^1$ violating that $\rho(X)=2$. Hence $g$ must be a blowup at a point (i.e. of type (b) or (c)). Let $E$ be its exceptional divisor, which is isomorphic to either  $\bP^2$, $Q$ (singular quadric cone), or the smooth quadric surface $\bQ^2 \cong \bP^1 \times \bP^1$ (according to \autoref{thm.StructureExtremalContractionsDim3}). In the first two cases, it is clear that $f$ would have to contract $E$ to a point, and so $E$ would be a fiber of $f$, contradicting that $\sO_E(E)$ is negative. The same holds in the third case as well, yet a little argument is needed for why $f$ contracts $E \cong \bQ^2 \cong \bP^1 \times \bP^1$ to a point. The key observation is that the ruling lines $x \times \bP^1$ and $\bP^1\times y$ (for closed points $x,y \in \bP^1$) are numerically equivalent inside $X$. Hence, if either of them does not intersect the fibers of $f$ then neither does the other. In particular, the restriction of $f$ to $E$ cannot be one of the canonical projections $\bP^1 \times \bP^1 \to\bP^1$ and hence it got to be a contraction to a point.
\end{proof}

\begin{theorem} \label{thm.MainTheorem}
Work in \autoref{setup.Positivity} with $d=3$. Then, if $\sE_{e}$ is ample then $X$ is a Fano threefold of Picard rank $1$.
\end{theorem}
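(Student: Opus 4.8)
The plan is to read the theorem off from \autoref{thm.AmpleImpliesFano}, \autoref{prop.RulingOutCases}, and \autoref{prop.NarrowingDownCasesFanoCase} by a short contradiction argument, so that no new computation is required. First I would invoke \autoref{thm.AmpleImpliesFano}: the ampleness of $\sE_e$ already forces $X$ to be a Fano threefold. Hence the only remaining assertion is that $\rho(X)=1$, and I would establish this by contradiction, supposing $\rho(X)\geq 2$.

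Granting $\rho(X)\geq 2$, the smooth Fano threefold $X$ meets the hypotheses of \autoref{prop.NarrowingDownCasesFanoCase}, which produces \emph{some} extremal contraction $f\colon X\to S$ that is either a smooth blowup along a curve (case (a)) or a conic bundle (case (1)) in the sense of \autoref{thm.StructureExtremalContractionsDim3}. That same $f$ also satisfies the hypotheses of \autoref{prop.RulingOutCases} --- namely $\sE_{e,X}$ is ample and $\rho(X)\geq 2$ --- whose conclusion is that $f$ must instead be of type ii.\ or a wild del Pezzo fibration. Since the list \{case (a), case (1)\} is disjoint from \{type ii., wild del Pezzo fibration\} --- a blowup along a curve is not a blowup at a point, and a conic bundle is not a del Pezzo fibration --- these two verdicts on $f$ are incompatible. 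This contradiction rules out $\rho(X)\geq 2$, so $\rho(X)=1$ and $X$ is a Fano threefold of Picard rank $1$.

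The structural point I would emphasize, rather than any calculation, is why both propositions are genuinely needed. \autoref{prop.RulingOutCases} on its own leaves the wild del Pezzo fibrations (with $p\leq 7$) uneliminated, and so cannot conclude $\rho(X)=1$ by itself; the role of \autoref{prop.NarrowingDownCasesFanoCase} is precisely to guarantee a contraction of a \emph{different} shape --- case (a) or (1) --- whose existence is then forbidden by \autoref{prop.RulingOutCases}. All the substantive work (the Koll\'ar--Mori classification in \autoref{thm.StructureExtremalContractionsDim3}, the non-ampleness of $\sE_e$ on blowups from \autoref{lem.FrobPushForwardLocalgeneralBlowups}, the fibration obstruction in \autoref{lem.Fibrations}, and the Mori--Mukai cone-of-curves analysis) is already absorbed into those two propositions, so I would reprove none of it. The only care required here is the bookkeeping: confirming that both propositions are invoked for the very same morphism $f$ and that their two case lists really do not overlap.
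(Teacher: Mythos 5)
Your argument is correct and is exactly the paper's proof: \autoref{thm.AmpleImpliesFano} gives Fano, and the paper's one-line ``putting \autoref{prop.RulingOutCases} and \autoref{prop.NarrowingDownCasesFanoCase} together'' is precisely the disjointness-of-case-lists contradiction you spell out. Your explicit check that both propositions are applied to the same contraction $f$ and that $\{$case (a), case (1)$\}$ is disjoint from $\{$case ii., wild del Pezzo fibration$\}$ is the right bookkeeping, just made more explicit than in the paper.
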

\begin{proof}
By \autoref{thm.AmpleImpliesFano}, $X$ is a Fano threefold. Putting \autoref{prop.RulingOutCases} and \autoref{prop.NarrowingDownCasesFanoCase} together yields $\rho(X) = 1$.
\end{proof}

\begin{remark}[Converse of \autoref{thm.MainTheorem}]
As we saw in \autoref{cor.Quadrics}, the converse of \autoref{thm.MainTheorem} seems to be rather subtle. In principle, since we may have a classification of Fano threefolds of Picard rank $1$ \cite{ShepherdBArronFanoThreefoldInPosChar,MegyesiFanoThreefoldInPosChar,TakeuchiSomeBirationalMapsOfFanoThreefolds,KawakamiKVVForFano3FoldsPosChar}, one may analyze the ampleness of $\sE_{e,X}$ case by case. Of course, the remaining cases are those of index $2$ (also known as del Pezzo threefolds) and those of index $1$ where the former is arguably the most tractable by direct analysis. Recall that the index-$2$ case includes $\bP^6 \cap \bG(2,5)$ (inside $\bP^9$ with respect to the Pl\"{u}cker embedding $\bG(2,5) \subset \bP^9$), the complete intersection of two smooth quadrics in $\bP^5$, and the smooth cubic hypersurface in $\bP^4$. These seem to be the easiest cases that might be computed explicitly. For instance, the computations in \cite{RaedscheldersSpenkoVandenBerghFrobeniusMorphismInInvariantTheory} may be very useful to answer this for $\bP^6 \cap \bG(2,5)$. In general, a different approach seems necessary. We do not attempt to pursue this here.
\end{remark}

\begin{remark}[Higher dimensions]
If the main results in \cite{WisniewskiOnContractionsOfExtremalRaysOfFanomanifolds} were to hold in positive characteristics, we may reduce the study of the ampleness of $\sE_{e,X}$ and extremal contractions in dimensions $\geq 4$ to those where divisors are not contracted (e.g., flipping contractions which we have not discussed so far) and of wild conic bundles. For instance, in Mori--Mukai's terminology, we may assume our Fano variety to be primitive by \autoref{lem.FrobPushForwardLocalgeneralBlowups} (which most likely sets an upper bound on the Picard rank in general). We will not attempt this here.
\end{remark}

\subsection{Further remarks}
To conclude, we would like to add some final comments regarding the positivity of the Frobenius trace kernels. For example, why is the positivity of $\sE_{e,X}$ so (seemingly) difficult to study for a hypersurface $X\subset \bP^d$? Is there some adjunction principle that may help?

\subsubsection{Hypersurfaces, complete intersections, and smooth blowups}
Let $X$ be a smooth variety and $Y \subset X$ be a smooth irreducible closed subvariety defined by $\sI \subset \sO_X$ (so $\sI$ is locally generated by $\codim(Y,X)$ elements; see \cite[II, Theorem 8.17]{Hartshorne}). By adjunction, $\omega_Y \cong \omega_X \otimes \det \sN_{Y/X}$, where $\sN_{Y/X} \coloneqq \ssHom_Y\bigl(\sI/\sI^2, \sO_Y\bigr)$ is the normal bundle of $Y$ in $X$. We mention next how $\kappa^e_Y \: F^e_* \omega_Y \to \omega_Y$ is related to $\kappa^e_X \: F^e_* \omega_X \to \omega_X$ through adjunction. There is a commutative diagram of exact sequences
\[
\xymatrix{
0 \ar[r] & \sI \otimes F^e_* \omega_X^{1-q} \ar[r] \ar[d]^-{\sI \otimes \tau_X^e} & F^e_* \big((\sI^{[q]} : \sI) \otimes \omega_X^{1-q} \big) \ar[r] \ar[d]^-{\tau_{Y/X}^e} &  F^e_* \omega_Y^{1-q} \ar[r] \ar[d]^-{\tau_Y^e} & 0\\
0 \ar[r] & \sI \ar[r] & \sO_X \ar[r] &  \sO_Y \ar[r] & 0
}
\]
where $\tau_{Y/X}^e$ is the restriction of $\tau_X^e \: F^e_* \omega_X^{1-q} \to \sO_X$ via the natural inclusion $(\sI^{[q]} : \sI) \otimes \omega_X^{1-q} \subset \omega_X^{1-q}$. Further, if $\sI$ is locally generated by a regular sequence $f_1,\ldots,f_m$, then $\sI^{[q]}:\sI$ is generated by $f_1^q,\ldots,f_m^q, (f_1 \cdots f_m)^{q-1}$; see \cite[Proposition (d) p.110]{HochsterMATH615LectureNotesWinter2010}. The above diagram works via the isomorphism of $\sO_Y$-modules
\[
\frac{\sI^{[q]} : \sI}{\sI^{[q]}}  \xrightarrow{\cong} \bigl( \det \sN_{Y/X} \bigr)^{1-q} = \bigl( \det \sI/\sI^2\bigr)^{q-1},
\]
which is defined by $g \cdot \bigl(f_1 \cdots f_m \bigr)^{q-1} \mapsto g \cdot \bigl(f_1 \wedge \cdots \wedge f_m\bigr)^{q-1}$ on an open neighborhood $U$ where $\sI|_U$ is defined by a regular sequence $f_1,\ldots,f_m$.

By letting $\sE_{e,Y/X}$ denote the kernel of $\tau_{Y/X}^e\: F^e_* \big((\sI^{[q]} : \sI) \otimes \omega_X^{1-q} \big) \to \sO_X$, we obtain an exact sequence
\begin{equation} \label{eqn.cartierAdjunction}
    0 \to \sI \otimes \sE_{e,X} \to \sE_{e,Y/X} \to \sE_{e,Y} \to 0
\end{equation}
If $Y \subset X$ is a divisor, $\sI^{[q]}:\sI = \sO_X((1-q)Y)$ and $\sE_{e,Y/X}$ is the kernel of 
\[(F^e_* \sO_X((q-1)Y))^{\vee} \cong F^e_* \sO_X((1-q)(K_X+Y)) \to \sO_X\] 
where $K_X$ is a canonical divisor on $X$. By the same argument of \autoref{thm.AmpleImpliesFano}, if $\sE_{e,Y/X}$ is ample then $-(K_X+Y)$ is ample (implying that $X$ and $Y$ are both Fano). In this case, \autoref{eqn.cartierAdjunction} takes the form
\[
0 \to \sE_{e,X} \to \sO_X(Y) \otimes \sE_{e,Y/X} \to \sO_Y(Y) \otimes \sE_{e,Y} \to 0
\]
Now, if $X = \bP^d$ and $Y$ is a smooth hypersurface of degree $n \leq q$, then
\[
\sE_{e,Y/X}(n-1) \cong \bigoplus_{i=1}^{d} \sO_X(i)^{\oplus a_{i,q-n;d,e}}.
\]
Therefore,  $\sE_{e,Y}(n-2)$ is globally generated and so $\sE_{e,Y}(n-1)$ is ample. In general, it is seemingly difficult to extract more information about $\sE_{e,Y}$ from this. For instance, whether or not $\sO_Y(n-2) \otimes \sE_{e,Y}$ is ample is subtle and is not true in general in view of \autoref{ex.QuadricsAnalysis}. Also, for projective spaces, $\sE_e(-1)$ is globally generated while this is never true for quadrics.

Let us now mention the case of smooth blowups. With $Y \subset X$ as above, suppose that $\codim(Y,X) = r \geq 2$ and let $\pi \: \Tilde{X} \to X$ be the blowup of $X$ along $Y$ with exceptional divisor $Y' \subset \tilde{X}$. Then, there is an exact sequence
\[
0 \to \sO_{\tilde{X}}(-Y') \otimes F^e_* \sO_{\Tilde{X}}((1-q)K_{\tilde{X}}) \to F^e_* \sO_{\Tilde{X}}((1-q)(K_{\tilde{X}}+Y')) \to F^e_* \omega_{Y'}^{1-q} \to 0.
\]
Equivalently,
\[
0 \to F^e_* \sO_{\Tilde{X}}((1-q)K_{\tilde{X}}) \to F^e_* \sO_{\Tilde{X}}((1-q)K_{\tilde{X}}+Y') \to \sO_{\bP}(-1) \otimes F^e_* \omega_{Y'}^{1-q} \to 0
\]
as $\pi|_{Y'} \: Y' \to Y$ is the projective bundle $\bP(\sI/\sI^2) \to Y$ and $\sN_{Y'/\tilde{X}} = \sO_{\bP}(-1)$. It is unclear to us how this could help in studying the positivity of $\sE_{e,\tilde{X}}$, say by restricting it to $Y'$. This is why we needed to rely on \autoref{lem.FrobPushForwardLocalgeneralBlowups}.

\subsubsection{Asymptotic Kunz's theorem}
There is an asymptotic aspect behind the local Kunz's theorem, namely, the $F$-signature. Let $R$ be a complete local algebra. We may define $0 \leq a_e \leq q^{\dim R}$ to be the largest rank of a free quotient of $F^e_* R$ as an $R$-module. Then, the limit $0 \leq \lim_{e \rightarrow \infty} a_e/q^{\dim R} \leq 1$ exists, it is called the $F$-signature of $R$, and is denoted by $s(R)$. Then $s(R)=1$ if and only if $R \cong \kay \llbracket x_1, \ldots ,x_{\dim R}\rrbracket$. See \cite{TuckerFSigExists} for details.

\begin{proposition} \label{pro.VolumeSignatureFormula}
Let $X$ be an $F$-split smooth projective variety. Then, for every invertible sheaf $\sL$ on $X$, the following formula for computing its volume (see \cite[Definition 2.2.31]{LazarsfeldPositivity1}) holds:
\[
\vol_X (\sL) = \lim_{e \rightarrow \infty} \frac{h^0(X,\sL \otimes \sE_{e,X}^{\vee})}{q^d/d!} \eqqcolon \epsilon(\sL).
\]
\end{proposition}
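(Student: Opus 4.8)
The plan is to exploit the $F$-splitting hypothesis to convert the defining sequence of $\sE_{e,X}^{\vee}$ into a direct sum decomposition, and then to recognize the resulting asymptotic section count as the limit defining the volume.

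First, since $X$ is $F$-split, the short exact sequence \autoref{eqn.FrobeniusSES} splits, giving a direct sum decomposition $F^e_* \sO_X \cong \sO_X \oplus \sE_{e,X}^{\vee}$. Tensoring with $\sL$ and invoking the projection formula in the form $\sL \otimes F^e_* \sO_X \cong F^e_* \sL^q$ (recorded in the footnote following \autoref{eqn.SESDefiningE_e}, using $F^{e,*}\sL = \sL^q$), I obtain
\[
F^e_* \sL^q \cong \sL \oplus \bigl(\sL \otimes \sE_{e,X}^{\vee}\bigr).
\]

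Next, I take global sections. Because the absolute Frobenius is a homeomorphism of the underlying space and $F^e_*$ is exact, one has $H^0\bigl(X, F^e_* \sL^q\bigr) = H^0\bigl(X, \sL^q\bigr)$, so the decomposition above yields
\[
h^0\bigl(X, \sL \otimes \sE_{e,X}^{\vee}\bigr) = h^0\bigl(X, \sL^q\bigr) - h^0(X, \sL).
\]
Dividing by $q^d/d!$ and letting $e \to \infty$, the constant term $h^0(X,\sL)$ contributes $0$ in the limit, while $h^0(X,\sL^q)/(q^d/d!)$ converges to $\vol_X(\sL)$; this is precisely the limit defining the volume (see \cite[Definition 2.2.31]{LazarsfeldPositivity1}) evaluated along the subsequence $m = p^e$. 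This establishes the asserted equality $\vol_X(\sL) = s(\sL)$, and when $\sL$ fails to be big both sides are simply $0$, so the argument is uniform.

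The only point requiring care---rather than a genuine obstacle---is this last one: to pass to the subsequence $m = p^e$ one must invoke the fact that the volume exists as an honest limit over all $m$, not merely as a $\limsup$, so that the subsequential limit agrees with $\vol_X(\sL)$. This is standard and already available from the cited reference. Everything else is a formal consequence of $F$-splitting, the projection formula, and the affineness of Frobenius.
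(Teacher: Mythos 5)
Your proof is correct and is essentially identical to the paper's: both twist the split sequence \autoref{eqn.FrobeniusSES} by $\sL$, apply the projection formula to get $h^0(X,\sL^q)=h^0(X,\sL)+h^0\bigl(X,\sL\otimes\sE_{e,X}^{\vee}\bigr)$, and divide by $q^d/d!$ before passing to the limit. Your added remark about the volume existing as a genuine limit (so that the subsequence $m=p^e$ suffices) is a point the paper leaves implicit, but it does not change the argument.
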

\begin{proof}
Twist the split sequence \autoref{eqn.FrobeniusSES} by $\sL$ and use the projection formula to conclude that $h^0(X,\sL^q)=h^0(X,\sL)+h^0(X,\sL \otimes \sE_{e,X}^{\vee})$. Dividing by $q^d/d!$ and letting $e \rightarrow \infty$ yield the desired equality.
\end{proof}
In this way, with notation as in \autoref{pro.VolumeSignatureFormula}, if $X$ admits a very ample invertible sheaf $\sL$ such that $\epsilon(\sL)=1$ then $X \cong \bP^{\dim X}$ (as for a very ample $\sL$ its volume equates to the degree of the closed embedding $i \: X \to \bP\bigl(H^0(X,\sL)\bigr)$). For instance, if $X$ admits a decomposition
\[
F^e_* \sO_X \cong \sO_X \oplus (\sL^{-1})^{a_{e}} \oplus \sF_e
\]
such that $h^0(\sL \otimes \sF_e) = 0$ and $\lim_{e \rightarrow \infty} a_e/(q^d/d!) = 1$ (as the projective spaces do) then $\epsilon(\sL)=1$ and $X \cong \bP^{\dim X}$.

\subsubsection{Miscellaneous} We may wonder about the structure of the mapping $\sL \mapsto \det F^e_* \sL$ on $\Pic X$. We may further consider the mapping $\alpha\:\Pic X \to \Pic X$ given by
\[
\alpha\: \sL \mapsto \bigotimes_{n=0}^{q-1}\det F^e_* \sL^n.
\]
We can compute this for $X = \bP^d$ and $\sL= \sO(1)$. Let $\alpha\bigl(\sO(1)\bigr) \cong \sO(-a)$. We compute $a$ as follows. If $f(t) = \sum_{l \geq 0} a_{l} t^ l= \sum_{n=0}^{q-1} \sum_{i \geq 0} a_{i,n}t^{iq+n}$, then 
\[f'(t) = \sum_{n=0}^{q-1} \sum_{i \geq 0} (iq+n)a_{i,n}t^{iq+n-1} = q \sum_{n=0}^{q-1} \sum_{i \geq 0} i a_{i,n}t^{iq+n-1} + \sum_{n=0}^{q-1}n\sum_{i \geq 0}a_{i,n}t^{iq+n-1}.
\] 
Therefore, setting $t=1$, we have 
\[f'(1) = q \sum_{n=0}^{q-1} \sum_{i \geq 0} ia_{i,n} + \sum_{n=0}^{q-1}n\sum_{i \geq 0} a_{i,n}\]
Therefore, applying this to $f(t)=\big((1-t^q)/(1-t)\big)^{d+1}$ gives
\[
(d+1)q^d \frac{q(q-1)}{2} = q \cdot a + \sum_{n=0}^{q-1} q^d = q \cdot a + q^d \frac{q(q-1)}{2}
\]
by our calculations in \autoref{pro.PushforwardProjectiveBundles}, where we use that $\sum_{i\geq 0} a_{i,e;d,n} = q^d$. Consequently,
\[
a = \frac{dq^d(q-1)}{2}.
\]
In general, $(F^e_* \sL)^{\vee} \cong F^e_* (\sL^{-1} \otimes \omega_X^{1-q})$. Applying this to $\sL = \omega_X^{-1}$ gives 
\[
F^e_* \omega_X^{-n} \cong F^e_*\big( \omega_X^{q-1-n} \otimes \omega_X^{1-q} \big)= \big(F^e_* \omega_X^{n-(q-1)} \big)^{\vee}
\]
In particular, 
\[
\det F^e_* \omega_X^{-n} \cong \Big(\det F^e_* \omega_X^{n-(q-1)} \Big)^{-1}.
\]
Consequently,
\[
\alpha(\omega_X^{-1}) = \alpha(\omega_X^{-1})^{-1},
\]and so $\alpha(\omega_X^{-1}) = \sO_X$ if there is no $2$-torsion in $\Pic X$. Further, if $p \neq 2$ then
\[\alpha(\omega_X^{-1}) = \det F^e_* \omega_X^{(1-q)/2} = \alpha(\omega_X^{-1})^{-1}.
\]
If $p=2$,\[\alpha(\omega_X^{-1}) = \sO_X = \alpha(\omega_X^{-1})^{-1}\]It is worth observing that, if $p \neq 2$ then $F^e_* \omega_X^{(1-q)/2}$ is self-dual.
\begin{question}
Assume $p\neq 2$. Does the self-dual locally free sheaf $F^e_* \omega_X^{(1-q)/2}$ play any role in telling the projective spaces apart among projective varieties? Consider the following $q$ rank-$q$ locally free sheaves
\[F^e_* \sO_X, F^e_* \omega_X^{-1}, F^e_* \omega_X^{-2}, \ldots, F^e_* \omega_X^{(1-q)/2}, \ldots, F^e_* \omega_X^{3-q}, F^e_* \omega_X^{2-q}, F^e_* \omega_X^{1-q}
\] 
where the opposite sheaves in the list are dual pairs. On $X=\bP^d$, $F^e_* \sO_X$ is the most negative while $F^e_* \omega_X^{1-q}$ is the most positive, and $F^e_* \omega_X^{(1-q)/2}$ sits in between being equally positive and negative. In fact, it is the one in that list with the largest number of copies of $\sO_X$ as a direct summand. In fact, if $a_e$ denotes such number of copies, then $F^e_* \omega_X^{(1-q)/2}$ is the only one for which $\lim_{e \rightarrow \infty}a_e/(q^d/d!) >0$. In this paper, we have only considered $F^e_* \sO_X$ and $F^e_* \omega_X^{1-q}$.
\end{question}

\bibliographystyle{skalpha}
\bibliography{MainBib}

\end{document}